\theoremstyle{definition}
\newtheorem{definition}{Definition}[section]
\newtheorem{remark}[definition]{Remark}
\theoremstyle{plain}
\newtheorem{theo}[definition]{Theorem}
\newtheorem{lem}[definition]{Lemma}
\newtheorem{prop}[definition]{Proposition}
\newtheorem*{theo*}{Theorem}
\numberwithin{equation}{section}
\newcommand{\R}{\ensuremath{\mathbb{R}}}     	%Real numbers
\newcommand{\N}{\ensuremath{\mathbb{N}}}     	%Natural numbers
\newcommand{\Z}{\ensuremath{\mathbb{Z}}}     	%Integers
\newcommand{\D}{\ensuremath{\mathcal{D}}}     	%Digit set
\newcommand{\nI}{\ensuremath{\mathcal{I}}}
\newcommand{\nP}{\ensuremath{\mathcal{P}}}
\newcommand{\nC}{\ensuremath{\mathcal{C}}}
\newcommand{\nQ}{\ensuremath{\mathcal{Q}}}
\newcommand{\nS}{\ensuremath{\mathcal{S}}}   	%Neighbor set
\newcommand{\Bold}{\boldsymbol}
\newcommand{\B}{\ensuremath{\boldsymbol{B}}}
\newcommand{\level}[1]{\mathrm{level}(#1)}
\newcommand{\sub}{t}
\newcommand*\circled[1]{\tikz[baseline=(char.base)]{
            \node[shape=circle,draw,inner sep=1.2pt] (char) {#1};}}
\title{On self-affine tiles that are homeomorphic to a ball}
\author{J\"org M. Thuswaldner}
\author{Shu-Qin Zhang}
\thanks{Part of this paper was written during the conference ``Numeration 2019'' which took place at the Erwin Schr\"odinger Institute in Vienna. The authors acknowledge the hospitality and the convenient working conditions that were provided there. Both authors are supported by the grant FWF W1230 funded by the Austrian Science Fund.}
\address[J.M.T.]{Chair of Mathematics and Statistics, University of Leoben, Franz-Josef-Strasse 18, A-8700 Leoben, Austria}
\email{joerg.thuswaldner@unileoben.ac.at}
\address[S.-Q.Z.]{School of Mathematics and Statistics, Zhengzhou University, 100 Science Avenue, Zhengzhou, Henan 45001, People’s Republic of China}
\email{sqzhang@zzu.edu.cn}
\subjclass[2020]{Primary: 
28A80, 		% Fractals
57M50.		% General geometric structures on low-dimensional manifolds
Secondary: 
51M20, 		% Polyhedra and polytopes; regular figures, division of spaces
52C22, 		% Tilings in $n$ dimensions
54F65.  		% Topological characterizations of particular spaces
}
\keywords{Self-affine sets, tiles and tilings, low dimensional topology, truncated octahedron}
\date{\today}
\begin{document}

\begin{abstract}
Let $M$ be a $3\times 3$ integer matrix which is expanding in the sense that each of its eigenvalues is greater than $1$ in modulus and let $\mathcal{D} \subset \Z^3$ be a {\it digit set} containing $|\det M|$ elements. Then the unique nonempty compact set $T=T(M,\D)$ defined by the set equation $MT=T+\D$ is called an {\em integral self-affine tile} if its interior is nonempty. If $\D$ is of the form $\D=\{0,v,\ldots, (|\det M|-1)v\}$ we say that $T$ has a {\em collinear digit set}. The present paper is devoted to the topology of integral self-affine tiles with collinear digit sets. In particular, we prove that a large class of these tiles is homeomorphic to a closed $3$-dimensional ball. Moreover, we show that in this case $T$ carries a natural CW complex structure that is defined in terms of the intersections of $T$ with its neighbors in the lattice tiling $\{T+z\,:\, z\in \Z^3\}$ induced by $T$. This CW complex structure is isomorphic to the CW complex defined by the {\em truncated octahedron}. 
\end{abstract}

\maketitle

\section{introduction}
\subsection{Context of the paper}
The present paper is devoted to the study of the topology of $3$-dimensional self-affine tiles. 

Let $M\in \Z^{n\times n}$ be an integer matrix which is expanding in the sense that each of its eigenvalues has modulus strictly greater than one. Moreover, let $\D \subset \Z^n$ be a {\it digit set} with $|\det M|$ elements. 
\begin{figure}[h]
\includegraphics[angle=0,trim={0 0 00 0},clip=true,width=.4\textwidth]{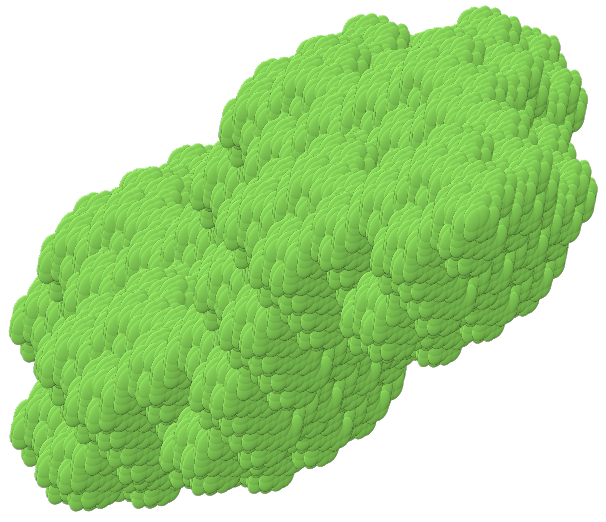} 
\hskip 0.5cm
\includegraphics[trim={0 0 00 0},clip=true,origin=c,width=.5\textwidth]{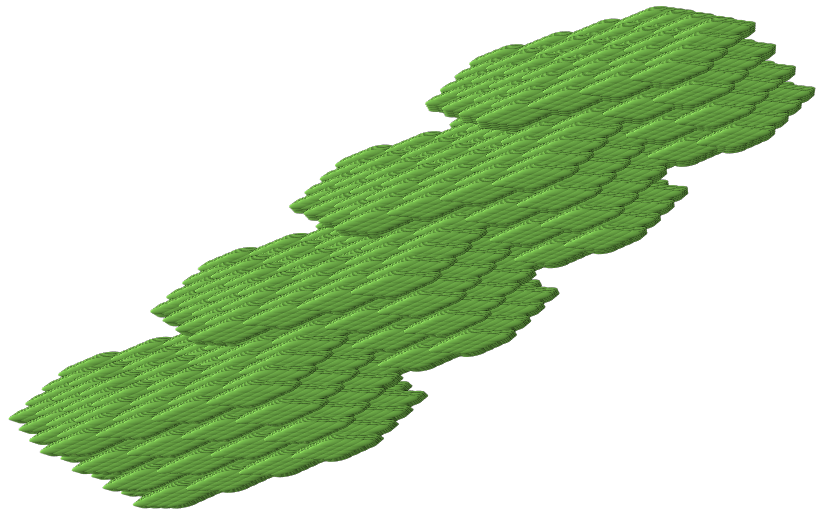}  
\caption{Two examples of $3$-dimensional self-affine tiles.\label{fig:3d}}
\end{figure}
Then it follows from the theory of iterated function systems (see {\it e.g.}~Hutchinson~\cite{Hutchinson81}) that there is a unique nonempty compact set $T=T(M,\D)$ such that 
\begin{equation}\label{eq:setequationPRAE}
MT=T+\D.
\end{equation}
If $T$ has nonempty interior then it is called an {\it integral self-affine tile}, or just a {\it self-affine tile} for short. If $\D$ is a complete set of coset representatives of the residue class ring $\Z^n/M\Z^n$, it is called a {\it standard digit set}. For standard digit sets it is known that the nonempty compact set $T$ defined by \eqref{eq:setequationPRAE} always has nonempty interior (see Bandt~\cite{Bandt91}). 

Self-affine tiles have been studied systematically since the 1990ies when 
Bandt~\cite{Bandt91}, Kenyon~\cite{Kenyon92}, Gr\"ochenig and Haas~\cite{GroechenigHaas94}, as well as Lagarias and Wang~\cite{LagariasWang96b,LagariasWang96a,LagariasWang97} proved fundamental results on these objects. Since that time the research on self-affine tiles developed in many different directions and they play a role in various branches of mathematics like in the theory of dynamical systems, in number theory, and in Fourier analysis and the construction of wavelets. The present paper is concerned with the topology of self-affine tiles. Since the seminal paper of Hata~\cite{Hata85}, the topology of self-affine sets in general, and of self-affine tiles in particular, has been thoroughly studied. Connectivity properties of self-affine tiles can be treated in a satisfactory way in arbitrary dimension $n$ (see for instance Kirat and Lau~\cite{KiratLau00}). Further investigation of their topology often relies on the Jordan curve theorem. For this reason, many papers on the topology of self-affine tiles are restricted to the $2$-dimensional case. We refer for instance to Bandt and Wang~\cite{BandtWang01} or Leung and Lau~\cite{LauLeung07} where homeomorphy to a disk was investigated, or to Ngai and Tang~\cite{NgaiTang04,NgaiTang05} for the study of self-affine tiles with disconnected interior. Another interesting direction of research which has relations to the {\it Fuglede conjecture} ({\it cf.~e.g.}~\cite{Fuglede:74,Tao:04}) is the characterization of all digit sets $\D$ that give rise to a self-affine tile $T(M,\D)$ for a given expanding integer matrix $M$, see for instance An and Lau~\cite{AnLau19}, Lai {\it et al.}~\cite{LaiLauRao:17}, and the survey by Lai and Lau~\cite{LaiLau17}. 

The present paper is devoted to the topology of $3$-dimensional self-affine tiles. The systematic topological study of the $3$-dimensional case was initiated some years ago when Bandt~\cite{Bandt10} considered the combinatorial topology of some $3$-dimensional self-affine tiles. Later, Conner and Thuswaldner~\cite{ConnerThuswaldner0000} gave criteria for a self-affine tile to be a closed $3$-dimensional ball and Deng~{\it et al.}~\cite{DengLiuNgai18} dealt with self-affine tiles of a special form and showed that they are $3$-dimensional balls.  Kamae~{\it et al.}~\cite{KLT:15} investigated a particular class of $n$-dimensional self-affine tiles. Recently, Thuswaldner and Zhang~\cite{TZ:19} studied a natural class of $3$-dimensional self-affine tiles and proved that their boundary is homeomorphic to a $2$-sphere. It is this class of tiles that we are interested in. Indeed, we want to explore if these tiles are indeed homeomorphic to a $3$-dimensional ball, which means that we have to exclude pathologies like the Alexander horned sphere which is known to occur in the context of self-affine tiles (see \cite[Section~8.2]{ConnerThuswaldner0000}).

\subsection{Description of the main results}
Our aim is to prove that a large class of well-known $3$-di\-men\-sio\-nal self-affine tiles is homeomorphic to a closed $3$-dimensional ball. Moreover, we will show that each tile in this class  carries a natural CW complex structure~(see {\it e.g.}\ Hatcher~\cite[p.~5]{Hatcher:02} for the definition of a CW complex). 

Before we state our main results, we introduce some notation. Let $M$ be an expanding $3\times 3$ integer matrix and let $\D \subset \Z^3$ be a {\em digit set} such that the unique nonempty compact set $T=T(M,\D)$ defined by the set equation
\begin{equation}\label{eq:setequation}
T= \bigcup_{d\in \D}M^{-1}(T+d)
\end{equation}
has nonempty interior. Then $T$ is a {\em self-affine tile}.
Define the set of \emph{neighbors} of $T$ by 
\begin{equation}\label{eq:neigh}
\nS=\{\alpha\in \Z[M,\D]\setminus\{0 \}\;: \; T\cap (T+\alpha)\neq \emptyset\}.
\end{equation}
Here
\[
\Z[M,\D] = \Z[\D,M\D, M^{2}\D] \subseteq \Z^3
\]
is the smallest $M$-invariant lattice containing $\D$. This definition is motivated by the fact that the collection $\{T+ z \,:\, z \in \Z[M,\D]\}$ often tiles the space $\R^3$ with overlaps of Lebesgue measure $0$ ({\it cf.\ e.g.}~Lagarias and Wang~\cite{LagariasWang97}). The translated tiles $T+\alpha$ with $\alpha\in\nS$ are then those tiles which ``touch'' ({\it i.e.}, have nonempty intersection with) the ``central tile'' $T$ in this tiling. It is clear that $\nS$ is a finite set since $T$ is compact by definition and $\Z[M,\D]$ is discrete. For the sets in which $T$ intersects with one given other tile we use the notation
\begin{equation}\label{eq:bgamma}
\B_{\alpha}=T\cap(T+\alpha) \qquad(\alpha \in \Z[M,\D]\setminus\{0\}). 
\end{equation}
More generally, for $\ell \ge 0$ we define the set of points in which $T$ intersects $\ell$ given other tiles by
\begin{equation}\label{eq:bbold}
\B_{\Bold{\alpha}} = \B_{\{\alpha_1,\ldots, \alpha_{\ell}\}} = T \cap (T+\alpha_1) \cap \cdots \cap (T+\alpha_{\ell}) \qquad (\Bold{\alpha}=\{\alpha_1,\ldots, \alpha_{\ell}\}\subset \Z[M,\D]\setminus\{0\}).
\end{equation}
Note in particular that $\B_\emptyset=T$.
Compactness of $T$ and discreteness of $\Z[M,\D]$ again ensures that there exist only finitely many $\Bold{\alpha}\subset \Z[M,\D]\setminus\{0\}$ satisfying $\B_{\Bold{\alpha}}\not=\emptyset$.

We will be interested in the following class of self-affine tiles. Let $M$ be an expanding $3\times 3$ integer matrix. We call $\D \subset \Z^3$ a {\em collinear digit set} for $M$ if there is a vector $v\in\Z^3\setminus\{0\}$ such that
\begin{equation}\label{eq:collinearD}
\D=\{0,v,2v,\ldots, (|\det M|-1)v\}.
\end{equation}
If $\D$ has this form, a self-affine tile\footnote{Note that we assume here that $T(M,\D)$ is a self-affine tile. This does not follow from the collinearity of $\D$.}
 $T=T(M,\D)$ is called a {\em self-affine tile with collinear digit set} (such tiles have been studied intensively in recent years, {\it cf.~e.g.}~\cite{LauLeung07,TZ:19}). 

For $k\ge 0$ denote the $k$-dimensional unit ball by $\mathbb{D}^k=\{x\in \mathbb{R}^k: \Vert x \Vert_2 \le 1\} \subset \mathbb{R}^k$  ($\Vert\cdot\Vert_2$ is the Euclidean norm). We note that $\mathbb{D}^0$ is a single point.  A {\em closed $k$-cell} or {\em $k$-ball} is a topological space that is homeomorphic to $\mathbb{D}^k$.

Our first main result shows that a large class of self-affine tiles with collinear digit sets are $3$-balls.

\begin{theo}\label{thm:ball}
Let $T=T(M,\D)$ be a $3$-dimensional self-affine tile with collinear digit set and assume that the characteristic polynomial $\chi(x)=x^3+Ax^2+Bx+C$ of $M$ satisfies $1 = A\le B < C$.
If $T$ has $14$ neighbors then  $T$ is a $3$-ball.
\end{theo}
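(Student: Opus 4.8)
The plan is to build on the result of Thuswaldner and Zhang~\cite{TZ:19}, which guarantees that under the present hypotheses the boundary $\partial T$ is homeomorphic to the $2$-sphere $S^2$. The whole difficulty is then to pass from ``the boundary is a sphere'' to ``the solid is a ball'': as stressed in the introduction, a $2$-sphere can be embedded in $\R^3$ in a wild way (the Alexander horned sphere), in which case the closure of one of the complementary domains fails to be a $3$-ball. Thus the heart of the matter is to show that the embedding $\partial T \hookrightarrow \R^3$ is tame, i.e.\ locally flat (bicollared); once this is known, the generalized Schoenflies theorem of Brown and Mazur immediately yields that the closure of the bounded complementary domain of $\partial T$ in $S^3 = \R^3 \cup \{\infty\}$ --- which is exactly $T$ --- is homeomorphic to $\mathbb{D}^3$.

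First I would organize the boundary by its contact structure. The hypothesis $|\nS| = 14$ together with the constraints $1 = A \le B < C$ on $\chi$ should force the intersection pieces $\B_\alpha = T\cap(T+\alpha)$, the edge pieces $\B_{\{\alpha_1,\alpha_2\}}$ and the vertex pieces $\B_{\{\alpha_1,\alpha_2,\alpha_3\}}$ to arrange themselves combinatorially like the $14$ faces, $36$ edges and $24$ vertices of a truncated octahedron (the Voronoi cell of the body-centred cubic lattice, which tiles $\R^3$). I would first prove that each face $\B_\alpha$ is a $2$-ball, each edge piece is an arc, and each vertex piece is a single point, and that they meet in the incidence pattern of the truncated octahedron. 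This can be carried out with graph-directed iterated function system / contact-automaton machinery: the set equation~\eqref{eq:setequation} subdivides each $\B_{\Bold{\alpha}}$ into finitely many pieces that are again (parts of) contact sets of subtiles, the list of contact types is finite, and one reads off from the neighbour graph that the pieces are disks, arcs and points respectively. The outcome is a regular CW decomposition of $\partial T \cong S^2$ isomorphic to the boundary complex of the truncated octahedron.

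The main obstacle is the tameness step, and it is precisely here that the self-affine structure must be exploited to rule out the horned-sphere pathology. The key point is finiteness: because $T$ satisfies~\eqref{eq:setequation} and the tiling $\{T+z : z \in \Z[M,\D]\}$ is locally finite, every point $p \in \partial T$ has, at each level $n$, a neighbourhood that is a union of level-$n$ subtiles whose combinatorial type is drawn from a finite list governed by the contact automaton. I would show that each of these finitely many local configurations is collared --- that the union of subtiles meeting $p$ contains a bicollar of the portion of $\partial T$ near $p$ --- and that collaring is preserved when passing from level $n$ to level $n+1$ under the contraction $M^{-1}$. Transporting a single collar across all scales via $M^{-n}$ and patching (which is legitimate precisely because only finitely many types occur) yields a genuine bicollar of $\partial T$ in $\R^3$, so $\partial T$ is locally flat. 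This is the step that genuinely uses $1 = A \le B < C$ and $|\nS| = 14$, since these are what make the local contact combinatorics coincide with the manifestly collared truncated-octahedron tiling; I expect it to be the most delicate part of the proof --- the horned sphere is tame at every finite stage and wild only in the limit, and it is the uniform finiteness of contact types that averts this --- and it is the exact place where the criteria of Conner and Thuswaldner~\cite{ConnerThuswaldner0000} for a self-affine tile to be a $3$-ball are designed to apply.

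Finally, with local flatness in hand, I would invoke the generalized Schoenflies theorem: the bicollared sphere $\partial T$ separates $S^3$ into two open sets each of whose closure is a $3$-ball. Since $\mathrm{int}\,T$ is the bounded component and $T = \overline{\mathrm{int}\,T}$, it follows that $T \cong \mathbb{D}^3$. As a by-product, the regular CW decomposition of $\partial T$ built above, coned appropriately into the interior, exhibits the CW complex structure on $T$ isomorphic to that of the truncated octahedron announced in the abstract.
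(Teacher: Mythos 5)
Your overall architecture (show $\partial T\hookrightarrow S^3$ is tame, then invoke the Brown--Mazur generalized Schoenflies theorem) is a legitimate alternative to the route the paper actually takes, and your first block --- the CW decomposition of $\partial T$ into $14$ disk faces $\B_\alpha$, $36$ arcs $\B_{\{\alpha_1,\alpha_2\}}$ and $24$ point vertices arranged like a truncated octahedron --- is exactly what \cite{TZ:19} and Proposition~\ref{lem:TZ:19} provide. The problem is that your middle step, the tameness of $\partial T$, is asserted rather than proved, and the mechanism you offer for it does not work as stated. Finiteness of the local contact types, plus the fact that each finite-level configuration is collared, plus scale-invariance under $M^{-1}$, is \emph{not} sufficient to produce a bicollar in the limit: the Alexander horned ball is itself obtained by a recursive construction with finitely many local configuration types, each stage of which is tame, and indeed \cite[Section~8.2]{ConnerThuswaldner0000} exhibits genuinely self-affine tiles realizing exactly this pathology. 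So ``only finitely many types occur, hence the finite-stage collars patch to a limit collar'' is precisely the implication that fails in general; one must show that the collars chosen at level $n$ can be chosen \emph{compatibly} with those at level $n+1$ near every boundary point, uniformly in $n$, and that is the entire content of the theorem, not a patching formality. Likewise, a combinatorial isomorphism between the face/edge/vertex incidence structure of $\partial T$ and that of the truncated octahedron says nothing about the embedding of $\partial T$ in $\R^3$ being equivalent to the standard one --- wild spheres can carry perfectly standard CW structures. Finally, you point to the criteria of \cite{ConnerThuswaldner0000} as the tool ``designed to apply'' here, but the paper explicitly states that those criteria are not applicable to this class, which is why new ideas were needed.

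What the paper does instead is sidestep a direct tameness/collaring argument entirely: it builds a decreasing sequence of regular partitionings $(\nQ_i'(\mathbf{n}))$ of $T$ out of subtiles, controls the topology of all pairwise and triple intersections of atoms across scales via the intersection graph $\nI$ (every intersection $\partial g_1\cap\partial g_2$ is a $2$-ball or two disjoint $2$-balls, never a ribbon or a sphere beyond level $1$), orders the atoms inside each atom so that the connectivity hypothesis of Bing's extension theorem holds (Proposition~\ref{prop:orderP2}), and then interleaves Bing's Theorems~3 and~5 to manufacture an equivalent decreasing sequence of partitionings of $\mathbb{D}^3$; Bing's characterization (Lemma~\ref{lem:homeo}) then gives the homeomorphism $T\simeq\mathbb{D}^3$ directly. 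Tameness of $\partial T$ comes out as a by-product rather than being the input. If you want to salvage your Schoenflies route you would have to supply, in place of the patching claim, an actual construction of compatible bicollars across all scales --- which, once written down, would amount to machinery of comparable weight to the Bing argument.
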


\begin{remark}\label{rem:1}
Let $T=T(M,D)$ be a $3$-dimensional self-affine tile with collinear digit set. If the coefficients $A,B,C$ of the characteristic polynomial $\chi(x)=x^3+Ax^2+Bx+C$ of $M$ satisfy $1 = A\le B < C$ then the matrix $M$ is expanding (see~\cite[Lemma~2.2]{TZ:19}). Moreover, according to~\cite[Theorem~1.1]{TZ:19} the collection $\{T+ \alpha \,:\, \alpha \in \Z[M,\D]\}$ tiles the space $\R^3$ with overlaps of Lebesgue measure~$0$. 
\end{remark}

\begin{remark}\label{rem:2}
According to \cite[Theorem~1.4]{TZ:19} a $3$-dimensional self-affine tile $T=T(M,D)$ with collinear digit set and characteristic polynomial $\chi(x)=x^3+Ax^2+Bx+C$ of $M$ satisfying $1 \le A\le B < C$ has $14$ neighbors if and only if one of the following conditions holds:
\begin{itemize}
\item $1\leq A<B<C$ and $B\geq 2A-1, C\geq 2(B-A)+2$;
\item $1\leq A<B<C$ and $B<2A-1, C\geq A+B-2$. 
\end{itemize}
We believe that similar criteria can also be established if negative coefficients are allowed.
\end{remark}

\begin{remark}\label{rem:3}
We conjecture that, apart from sporadic cases (as, for instance, the ones studied in~\cite{Bandt10}), $3$-dimensional self-affine tiles with collinear digit set having more than $14$ neighbors are not homeomorphic to a $3$-ball. In the $2$-dimensional case, only self-affine tiles with a small number of neighbors have a nice topological structure (see~\cite{BandtWang01};  we refer to~\cite{LuoThuswaldner06,NgaiTang04,NgaiTang05} for $2$-dimensional tiles with wild topology). 
\end{remark}

Our second main result shows that the sets $\B_{\Bold{\alpha}}$ defined in \eqref{eq:bbold} provide a natural CW complex structure on $T$. 

\begin{theo}\label{thm:CW}
Let $T=T(M,\D)$ be a $3$-dimensional self-affine tile with collinear digit set and assume that the characteristic polynomial $\chi(x)=x^3+Ax^2+Bx+C$ of $M$ satisfies $1 = A\le B < C$. 
If $T$ has $14$ neighbors then $T$ carries the following natural CW complex structure. 
\begin{itemize}
\item The closed $0$-cells are the $24$ nonempty sets $\B_{\{\alpha_1,\alpha_2,\alpha_3\}}$ with $\{\alpha_1,\alpha_2,\alpha_3\}\subset \Z[M,\D]\setminus\{0\}$. 
\item The closed $1$-cells are the $36$ nonempty sets $\B_{\{\alpha_1,\alpha_2\}}$ with $\{\alpha_1,\alpha_2\}\subset \Z[M,\D]\setminus\{0\}$.
\item The closed $2$-cells are the $14$ nonempty sets $\B_{\alpha_1}$ with $\alpha_1\in \nS$. 
\item The closed $3$-cell is $\B_\emptyset$.
\end{itemize}
For $i\in\{1,2,3\}$, the closed $i$-cell $\B_{\boldsymbol{\alpha}}$, $\# \boldsymbol{\alpha} = 3 -i$, is attached to the $(i-1)$-skeleton $T^{i-1}$ by attaching its boundary $\partial \B_{\boldsymbol{\alpha}}$ (as a manifold) to the $(i-1)$-sphere
\[
\bigcup_{\alpha \not \in \boldsymbol{\alpha}} \B_{\boldsymbol{\alpha} \cup \{\alpha\}}.
\] 
This CW complex is isomorphic to the natural CW complex structure of a truncated octahedron.
\end{theo}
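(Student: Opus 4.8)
The plan is to build on what is already available: by Theorem~\ref{thm:ball} the tile $T$ is a $3$-ball, so its boundary $\partial T$ is a $2$-sphere, and the tiling property recorded in Remark~\ref{rem:1} shows that this sphere is covered by the neighbor intersections, $\partial T=\bigcup_{\alpha\in\nS}\B_\alpha$, since for $\alpha\neq 0$ the set $\B_\alpha=T\cap(T+\alpha)$ meets $T$ only along its boundary. The strategy is therefore to exhibit the decomposition of $\partial T$ induced by the sets $\B_{\boldsymbol{\alpha}}$ as a regular CW decomposition of $S^2$ that is isomorphic to the boundary complex of the truncated octahedron, and then to attach the single $3$-cell $\B_\emptyset=T$ along this sphere. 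Because the truncated octahedron is the space-filling polytope with $14$ facets and has $f$-vector $(24,36,14)$, the enumeration claimed in the statement is exactly what one expects.

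The first step is the combinatorics. Using the explicit description of the $14$-element neighbor set $\nS$ obtained in \cite{TZ:19}, I would determine which pairwise intersections $\B_{\{\alpha_1,\alpha_2\}}$ and which triple intersections $\B_{\{\alpha_1,\alpha_2,\alpha_3\}}$ are nonempty, and verify that there are exactly $36$ of the former and $24$ of the latter, while $\B_{\boldsymbol{\alpha}}=\emptyset$ whenever $\#\boldsymbol{\alpha}\ge 4$. This is a finite computation in the lattice $\Z[M,\D]$, controlled by the graph-directed set equations $\B_{\boldsymbol{\alpha}}=\bigcup M^{-1}(\B_{\boldsymbol{\beta}}+d)$ that the intersection sets inherit from \eqref{eq:setequation}. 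The same equations record the incidence data --- which vertices lie on which edge and which edges bound which face --- and I would check that these incidences agree with those of the truncated octahedron.

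The heart of the proof is to show that each piece is a cell of the expected dimension, and here I would argue from the bottom up. Each nonempty triple intersection $\B_{\{\alpha_1,\alpha_2,\alpha_3\}}$ should be a single point (the meeting point of $T$ with three neighbors), which I would obtain by analysing its graph-directed iterated function system and showing that the associated subsystem has a unique attractor point. Each nonempty double intersection $\B_{\{\alpha_1,\alpha_2\}}$ should then be an arc whose endpoints are the two incident vertices, again read off from the graph-directed structure by identifying the relevant subsystem as one whose attractor is homeomorphic to $[0,1]$. Two distinct such arcs can meet only in a common vertex --- any further common point would lie in some $\B_{\boldsymbol{\alpha}}$ with $\#\boldsymbol{\alpha}\ge 3$, hence in a vertex, or else in an empty quadruple intersection --- so, glued at the vertices, the arcs form a graph $G$ properly embedded in $\partial T$ and isomorphic to the $1$-skeleton of the truncated octahedron.

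Once $G$ is in place the faces follow from planar topology: by the Jordan--Schoenflies theorem the complementary regions of $G$ in $S^2=\partial T$ are open disks, and one checks that the closure of each region is precisely one of the sets $\B_\alpha$, $\alpha\in\nS$. Hence each $\B_\alpha$ is a closed $2$-cell whose boundary is the circle $\bigcup_{\alpha'\neq\alpha}\B_{\{\alpha,\alpha'\}}$, and the prescribed attaching maps for the $1$- and $2$-cells hold by construction. This produces a regular CW structure on $\partial T$, and comparing its vertex, edge and face incidences with those computed for the truncated octahedron gives the claimed isomorphism of CW complexes; attaching $\B_\emptyset=T$ along the homeomorphism $\partial T\cong S^2$ from Theorem~\ref{thm:ball} completes the $3$-dimensional structure. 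The main obstacle is the bottom-up dimension count, and above all the proof that the double intersections are arcs: extracting the precise topology of these fractal sets from their graph-directed equations is the delicate point, and it is exactly this step that upgrades the qualitative fact $\partial T\cong S^2$ to the rigid combinatorial model of the truncated octahedron.
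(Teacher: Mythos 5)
Your outline is correct and its overall architecture matches the paper's: decompose $\partial T$ into the pieces $\B_{\boldsymbol{\alpha}}$, identify them as cells of dimension $3-\#\boldsymbol{\alpha}$ with the incidence combinatorics of the truncated octahedron, and attach the single $3$-cell $\B_\emptyset=T$ using Theorem~\ref{thm:ball}. The difference is in how the cell structure of the pieces is obtained. What you flag as ``the delicate point'' --- that triple intersections are points and double intersections are arcs --- is not re-proved in the paper at all: it is imported wholesale from \cite{TZ:19} via Proposition~\ref{lem:TZ:19}, which also supplies directly that each $\B_\alpha$ is a $2$-ball whose boundary is the circle $\bigcup_{\alpha'}\B_{\{\alpha,\alpha'\}}$, and the combinatorial enumeration ($24$/$36$/$14$) and the isomorphism with the truncated octahedron come from Lemma~\ref{lem:cycl}, Lemma~\ref{lem:OB} and the labeled model octahedron $O$, so the paper's proof of Theorem~\ref{thm:CW} reduces to a short assembly of these citations plus Theorem~\ref{thm:ball}. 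Your Jordan--Schoenflies step is the one genuinely different ingredient: you derive the $2$-cells from the embedded $1$-skeleton rather than quoting the $2$-ball property of $\B_\alpha$. That route can be made to work, but it is not free --- to conclude that the closure of each complementary region of the graph $G\subset\partial T$ is exactly one $\B_\alpha$ you need that each $\B_\alpha$ contains at most one such region, which requires either the $3$-connectivity of the truncated octahedron's graph together with Whitney's uniqueness of the embedding (so that the face cycles of your embedding coincide with the combinatorial face cycles), or else an independent connectivity argument for $\B_\alpha\setminus G$; without this a single $\B_\alpha$ could a priori split into several complementary disks. Given that Proposition~\ref{lem:TZ:19}~(1) and (2) already hand you the $2$-balls and their bounding circles, this detour buys nothing here, though it would be the natural path if one wanted a proof not resting on the disk theorem of \cite{TZ:19}.
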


\begin{remark}
In the literature (see {\it e.g.} Hatcher~\cite[p.~5]{Hatcher:02}), an (open) $k$-cell of a CW complex is a topological space that is homeomorphic to the open unit ball in $\mathbb{R}^k$ for $k\ge 0$ (a $0$-cell is a single point). The $k$-cells of the CW complex defined in Theorem~\ref{thm:CW} are the nonempty sets $\mathrm{Int}(\B_{\boldsymbol{\alpha}})$ with $|\alpha|=3-k$ ($0\le k \le 3$). Here, for a $k$-manifold $\mathcal{M}$ with boundary, $\mathrm{Int}(\mathcal{M})$ denotes the set of $x\in \mathcal{M}$ having a neighborhood that is homeomorphic to a $k$-cell (contrary to the topological interior $X^\circ$ of a set $X$ w.r.t. some ambient space). We use closed cells for notational convenience.
\end{remark}

\begin{figure}[h]
\includegraphics[angle=0,trim={0 80 0 40},clip=true,width=0.7\textwidth]{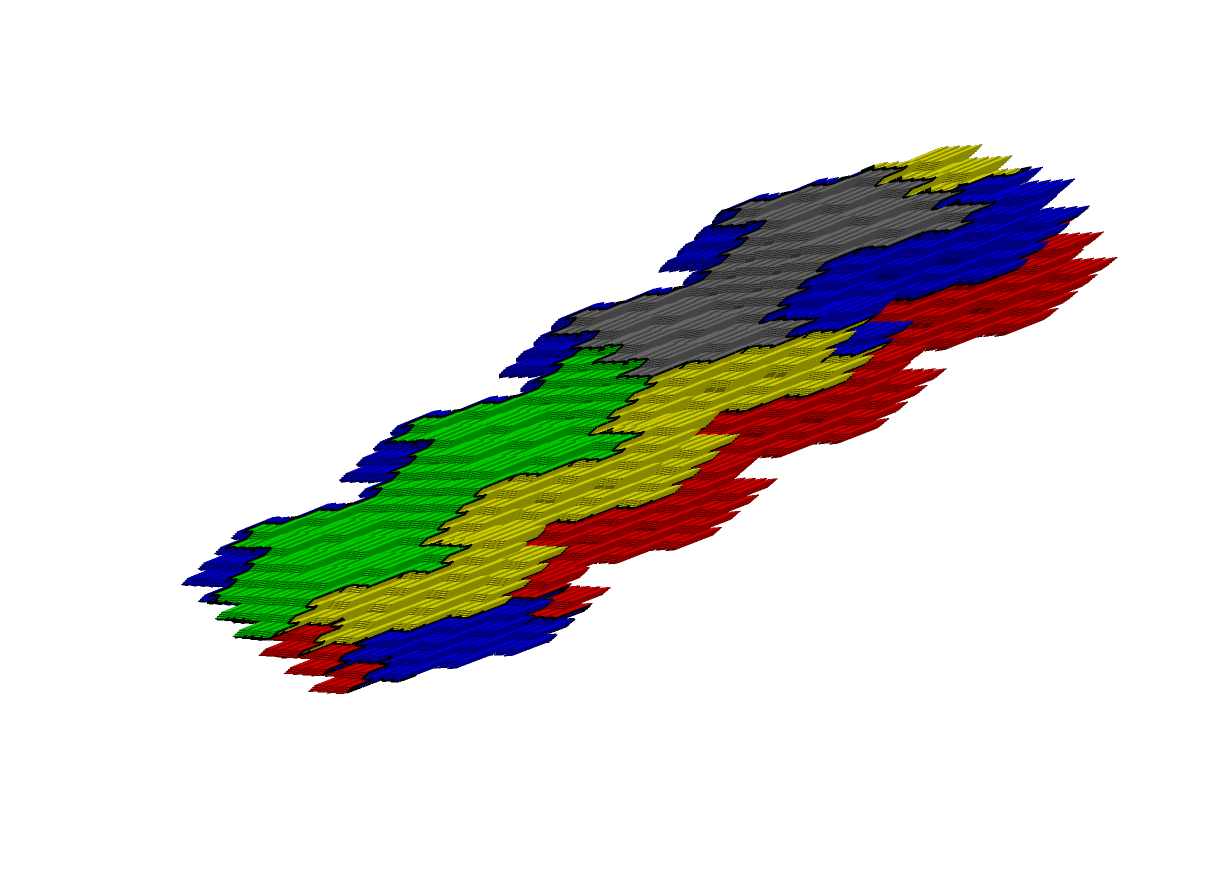} 
\caption{The CW complex structure of a self-affine tile $T$.\label{fig:CWtile}}
\end{figure}

In Figure~\ref{fig:CWtile} we visualize the CW complex structure of the self-affine tile $T$ on the right hand side of Figure~\ref{fig:3d}. The whole tile $T=\B_\emptyset$ is a closed $3$-cell. Each of the patches is homeomorphic to a closed $2$-cell $\B_{\alpha}$ for some $\alpha \in \nS$. The union of these patches forms the $2$-sphere $\partial T$. Two distinct closed $2$-cells meet in a closed $1$-cell $\B_{\{\alpha_1,\alpha_2\}}$, and three closed $2$-cells meet in a single point of the form $\B_{\{\alpha_1,\alpha_2,\alpha_3\}}$. If we consider open cells, then clearly $T$ can be written as the disjoint union 
\[
T= \coprod_{\boldsymbol{\alpha} \subset \mathbb{Z}^3} \mathrm{Int}(\B_{\boldsymbol{\alpha}}).
\]
\smallskip

In our proofs we need new ideas because the criterion for the homeomorphy of a self-affine tile to a $3$-ball established in~\cite{ConnerThuswaldner0000} is applicable only to single tiles, while the theories developed in~\cite{DengLiuNgai18,KLT:15} just cover tiles of a particular shape. Our proofs use the theory of Bing~\cite{Bing51} that leads to a topological characterization of $k$-spheres for $k\le 3$. However, since our conditions differ from the ones of Bing, our proof differs from Bing's proofs and exploits the self-affinity of our tiles.

\smallskip

We have some hope that our theory can be applied to the case $A\ge 2$ as well. However, this generalization would require more case studies and tedious calculations. If negative coefficients $A,B,C$ are permitted, according classes of expanding matrices can be studied. Moreover, Kwun~\cite{Kwun61} and O.~G.~Harrold~\cite{Harrold:61,Harrold:65} establish higher dimensional generalizations of the results of Bing~\cite{Bing51} that we are using here. These results can probably be used to extend our theory to higher dimensions.
 
\subsection{Outline of the paper}
The paper is organized as follows. In Section~\ref{sec:2} we provide preliminaries and basic notions that will be of importance in the proofs of our main results. This includes some graphs that are commonly used in the study of the topology of self-affine tiles and a description of a tiling induced by the truncated octahedron. This tiling is used as a model for the tiling induced by a self-affine tile taken from the class we are interested in. Section~\ref{sec:intersec} describes intersections of subtiles of a self-affine tile. These intersections, which will play an important role in the proofs of our main results, are captured by a large graph, that will be studied in some detail. Finally, Section ~\ref{sec:4} gives an account on the theory of partitionings due to Bing~\cite{Bing51} and defines particular sequences of partitionings that are suitable for our purposes. Finally, these sequences of partitionings are used to establish Theorem~\ref{thm:ball}. Combining Theorem~\ref{thm:ball} with results from \cite{TZ:19} finally leads to the proof of Theorem~\ref{thm:CW}.

\section{Intersections of self-affine tiles and CW complexes}\label{sec:2}

In this section we set up some preliminaries. In Section~\ref{sec:basic} we provide some basic properties of self-affine tiles that will be needed in the sequel. In Section~\ref{sec:normal} we recall that each $3$-dimensional self-affine tile with collinear digit set has a normal form, a so-called {\it $ABC$-tile}. This entails that in the sequel we can restrict ourselves to the investigation of this class of tiles without loss of generality. After that, in Section~\ref{sec:neighbor} we recall the notion of {\em neighbor graph} that permits us to study intersections of the form $T \cap (T+\alpha)$ for an $ABC$-tile $T$. Section~\ref{sec:hata} is devoted to the Hata graph, a graph that surveys the intersections between the sets $T+\alpha$, $\alpha\in \nS$, and gives some results related to this graph. Finally, in Section~\ref{sec:O} we relate an $ABC$-tile $T$ with $14$ neighbors and its lattice tiling to the so-called {\it bitruncated cubic honeycomb}, a lattice tiling of $\R^3$ by {\it truncated octahedra}. 

\subsection{Basic properties of self-affine tiles}\label{sec:basic}
Let $M \in \mathbb{Z}^{3\times 3}$ and $\D \subset \Z^3$ be given in a way that $T=T(M,\D)$ is a self-affine tile. Let
\begin{equation}\label{eq:Di}
\D_i = \D + M\D + \dots + M^{i-1}\D \qquad (i\in \N)
\end{equation}
and define the empty sum $\D_0$ to be equal to the vector $0\in\R^3$. Iterating the set equation \eqref{eq:setequation} for $i\in\N$ times yields
\begin{equation}\label{eq:iset}
T= \bigcup_{d\in \D_i}M^{-i}(T+d). 
\end{equation}
If $\mu$ denotes the Lebesgue measure in $\R^3$ we have
\begin{equation}\label{eq:intersectDi}
\mu((T+d_1) \cap (T+d_2)) = 0 \qquad (d_1,d_2 \in \D_i,\, d_1\neq d_2),
\end{equation}
{\it i.e.}, the sets  in the union on the right hand side of \eqref{eq:iset} are mutually {\em essentially disjoint} ({\it cf.}~\cite[(3.11)]{LagariasWang96a}). For this reason each set of the form $M^{-i}(T+d)$ with $i\in \N$ and $d\in \D_i$ is called a {\em subtile} of $T$. Accordingly, $M^{-k}(\sub+z)$ is called a subtile of  $M^{-k}(T+z)$ if $\sub$ is a subtile of $T$ ($k\in\N$ and $z\in \D_k$).

Because $T$ is a self-affine tile it has nonempty interior. Thus the following is true by \cite[Theorem~1.1]{LagariasWang96a}.
    
\begin{lem}\label{lem:LW1}
A self-affine tile $T$ is equal to the closure of its interior. Its boundary $\partial T$ has zero Lebesgue measure.
\end{lem}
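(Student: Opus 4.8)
The plan is to establish the two assertions—that $T=\overline{T^\circ}$ and that $\mu(\partial T)=0$—directly from the set equation \eqref{eq:setequation} and its iterate \eqref{eq:iset}, using only the hypotheses that $T^\circ\neq\emptyset$ and that $M$ is expanding. Throughout I would use the elementary topological fact that $S\subseteq T$ implies $S^\circ\subseteq T^\circ$ (a point with a neighbourhood inside $S$ has one inside $T$); since each $M^{-i}$ is an affine homeomorphism, applying this to the subtiles gives $\big(M^{-i}(T+d)\big)^\circ=M^{-i}(T^\circ+d)\subseteq T^\circ$ for every $i\in\N$ and $d\in\D_i$.

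First I would prove $T=\overline{T^\circ}$. The inclusion $\overline{T^\circ}\subseteq T$ is immediate since $T$ is closed. For the reverse inclusion, fix $x\in T$ and $\eps>0$. Because $M$ is expanding, $\Vert M^{-i}\Vert\to 0$, so for $i$ large every subtile $M^{-i}(T+d)$, $d\in\D_i$, has diameter less than $\eps$. By \eqref{eq:iset} the point $x$ lies in some such subtile $S$, and since $S^\circ=M^{-i}(T^\circ+d)\neq\emptyset$ is contained in $T^\circ$, any $y\in S^\circ$ is an interior point of $T$ with $\Vert x-y\Vert<\eps$. Hence $x\in\overline{T^\circ}$, proving $T\subseteq\overline{T^\circ}$.

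Next comes $\mu(\partial T)=0$, which is the real obstacle: a direct one-step self-affine estimate is useless, because Lebesgue measure scales by $|\det M|^{-1}$ while the set equation has exactly $|\det M|$ pieces, so every such inequality is perfectly balanced. The key observations are (i) a self-affine containment for the boundary and (ii) the presence of one subtile lying entirely in the interior. For (i), if $x\in\partial T$ then $x$ lies in some subtile $M^{-i}(T+d)$ but not in its interior $M^{-i}(T^\circ+d)\subseteq T^\circ$, whence $x\in M^{-i}(\partial T+d)$; iterating \eqref{eq:setequation} this yields $\partial T\subseteq\bigcup_{d\in\D_i}M^{-i}(\partial T+d)$ for every $i$. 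For (ii), fix a ball $B=B(c,r)\subseteq T^\circ$ and choose $i$ so large that all level-$i$ subtiles have diameter smaller than $r$; then the subtile $M^{-i}(T+d^\ast)$ containing $c$ satisfies $M^{-i}(T+d^\ast)\subseteq B\subseteq T^\circ$, so $M^{-i}(\partial T+d^\ast)\subseteq T^\circ$ is disjoint from $\partial T$ and may be deleted from the union in (i).

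Combining (i) and (ii) gives $\partial T\subseteq\bigcup_{d\in\D_i,\,d\neq d^\ast}M^{-i}(\partial T+d)$, and taking Lebesgue measure (each piece has measure $|\det M|^{-i}\mu(\partial T)$) produces the \emph{strict} contraction
\[
\mu(\partial T)\le\big(|\det M|^{i}-1\big)\,|\det M|^{-i}\,\mu(\partial T)=\Big(1-|\det M|^{-i}\Big)\,\mu(\partial T).
\]
Since $M$ is expanding we have $|\det M|\ge 2$, so the factor $1-|\det M|^{-i}$ is strictly less than $1$, forcing $\mu(\partial T)=0$. I expect step (ii)—isolating a fully interior subtile in order to break the balance of the self-affine measure estimate—to be the crux; everything else is routine once the shrinking of subtile diameters (from expansiveness of $M$) is in place. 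Alternatively, the whole statement can simply be quoted from Lagarias and Wang~\cite[Theorem~1.1]{LagariasWang96a}.
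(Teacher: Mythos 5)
Your proof is correct, but it takes a genuinely different route from the paper: the paper disposes of this lemma in one line by citing Lagarias and Wang~\cite[Theorem~1.1]{LagariasWang96a} (your closing remark is in fact exactly what the authors do), whereas you give a complete self-contained argument. All the steps check out: the density of $T^\circ$ in $T$ follows from the shrinking subtile diameters together with $\bigl(M^{-i}(T+d)\bigr)^\circ = M^{-i}(T^\circ+d)\subseteq T^\circ$; the self-affine containment $\partial T\subseteq\bigcup_{d\in\D_i}M^{-i}(\partial T+d)$ is valid because $\partial T\subseteq T$ and any point of $M^{-i}(T^\circ+d)$ is interior to $T$; and your step~(ii) correctly breaks the measure-theoretic balance by exhibiting one level-$i$ subtile inside a ball $B\subseteq T^\circ$, whose boundary piece can then be discarded. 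Two small points you implicitly use and could make explicit: $\#\D_i=|\det M|^i$ (otherwise $\mu(T)\le \#\D_i\,|\det M|^{-i}\mu(T)$ would already force $\mu(T)=0$, so this follows from $T^\circ\neq\emptyset$), and $\mu(\partial T)<\infty$ (from compactness), which is needed to conclude from $\mu(\partial T)\le(1-|\det M|^{-i})\mu(\partial T)$. What each approach buys: the citation keeps the paper short and points the reader to the standard reference; your argument makes transparent that the lemma needs nothing beyond expansiveness of $M$ and $T^\circ\neq\emptyset$, and it is essentially the classical proof underlying the cited theorem.
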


Let $\sub_1,\sub_2$ be two distinct subtiles of $T$. It is clear from the measure disjointness of the union in \eqref{eq:iset} that either $\sub_1 \subset \sub_2$, or $\sub_2 \subset \sub_1$, or $\mu(\sub_1 \cap \sub_2) = 0$. Lemma~\ref{lem:LW1} implies that  
\begin{equation}\label{eq:inteq}
\mu(\sub_1 \cap \sub_2) = 0 \;\Longleftrightarrow\; 
\sub_1^\circ \cap \sub_2^\circ = \emptyset   \;\Longleftrightarrow\; 
\sub_1 \cap \sub_2 = \partial \sub_1 \cap \partial \sub_2.
\end{equation}
In the sequel we will often tacitly make use of these equivalences.

\subsection{A normal form}~\label{sec:normal}
For the tiles of our main results we now define a simple normal form. Let $A,B,C\in\N$ with $1\le A \le B < C$ be given and set
\begin{equation}\label{digit}
M=\begin{pmatrix}
0&0&-C\\
1&0&-B\\
0&1&-A\\
\end{pmatrix} \quad\hbox{and}\quad \D=\left\{\begin{pmatrix}
0\\
0\\
0\\
\end{pmatrix}, \begin{pmatrix}
1\\
0\\
0\\
\end{pmatrix},\dots, \begin{pmatrix}
C-1\\
0\\
0\\
\end{pmatrix}\right\}.
\end{equation}
The matrix $M$ is expanding by \cite[Lemma 2.2]{TZ:19}.  Moreover, $\D$ is a complete set of coset representatives of $\Z^3/M\Z^3$ and $\Z[M,\D]=\Z^3$. Define $T$ by $MT = T + \D$. Then $T$ is a self-affine tile. We call such a tile $T$ an \emph{$ABC$-tile}. We know from \cite[Lemma 2.4]{TZ:19} that an $ABC$-tile $T$ tiles $\R^3$ by $\Z^3$-translates in the sense that $T+\Z^3=\R^3$, where $(T+\alpha_1) \cap (T+\alpha_2)$ has Lebesgue measure $0$ for all $\alpha_1,\alpha_2\in\Z^3$ with $\alpha_1\not=\alpha_2$. We thus say that $\{T+z\,:\, z\in\Z^3\}$ forms a {\em tiling} of $\R^3$.

It turns out that we can confine ourselves to the study of $ABC$-tiles. Indeed, let $M'$ be a $3\times 3$ integer matrix with characteristic polynomial $\chi(x)=x^3+Ax^2+Bx+C$ satisfying $1\le A \le B < C$.  By \cite[Lemma~2.2]{TZ:19} we know that $M'$ is an expanding matrix. Let $v\in \Z^3\setminus\{0\}$ and let $\D'=\{0,v,2v,\dots,(C-1)v\}\subset \Z^3$ be a collinear digit set such that $T'=T'(M',\D')$ is a self-affine tile. Let $T=T(M,\D)$ be the $ABC$-tile with characteristic polynomial $\chi$. From~\cite[Section~2.1]{TZ:19} we know that there is a regular matrix $E$ such that the linear mapping $E:\R^3\to\R^3$ maps $\Z^3$ bijectively onto $\Z[M',\D']$, that $T'=ET$, and that for each $\{\alpha_1,\ldots, \alpha_\ell\} \subset \Z[M',\D']\setminus\{0\}$ we have 
\[
T' \cap (T'+E \alpha_1) \cap \dots \cap (T'+E \alpha_\ell)  = E(T \cap (T +  \alpha_1) \cap \dots \cap (T +\alpha_\ell)).
\] 
It is therefore sufficient to prove Theorems~\ref{thm:ball} and~\ref{thm:CW} for $ABC$-tiles and in all what follows we may restrict our attention to this class of self-affine tiles.  

Let $T=T(M,\D)$ be an $ABC$-tile and let $z\in \mathcal{D}_i$ for some $i\ge 0$. Because $\D$ is a standard digit set there exist unique elements $e_{0},\ldots, e_{i-1}\in \{0,\ldots,C-1\}$ such that  
\[
z=\sum_{j=0}^{i-1} M^j \begin{pmatrix}e_{j} \\0 \\0  \end{pmatrix} .
\]
In this case we write 
\begin{equation}\label{eq:Mnotation}
z=(e_{i-1},\ldots,e_0)_M. 
\end{equation}
This notation will prove particularly useful for $i=1$ to write digits in a space-saving way. 

\subsection{The neighbor graph}\label{sec:neighbor}
Let $T=T(M,\D)$ be an $ABC$-tile. In the sequel we will need the so-called \emph{neighbor graph} (see {\it e.g.}~\cite{ScheicherThuswaldner03}), a graph that can be used to describe the intersections $\B_\alpha=T\cap (T+\alpha)$ for $\alpha\in \nS$. We begin by recalling some definitions from graph theory. For a directed labeled graph $G$ with set of nodes $V$, set of edges $E$, and set of edge-labels $L$, we write an edge leading from  $v \in V$ to $v' \in V$ labeled by $\ell \in L$ as $v\xrightarrow{\ell} v'$. In this case $v$ is called a \emph{predecessor} of $v'$ and $v'$ is called a \emph{successor} of $v$. Following {\it e.g.}~Diestel~\cite[Chapter~1]{Diestel:05}  a (finite or infinite) sequence $v_0\xrightarrow{\ell_1}v_1\xrightarrow{\ell_2}v_2\xrightarrow{\ell_3}\cdots$ of consecutive edges in $G$ is called a \emph{walk}. A walk whose nodes $v_0,v_1,v_2,\ldots$ are mutually distinct is called a \emph{path}.  If $G$ is undirected and not labeled, then an edge of $G$ connecting the nodes $v$ and $v'$ is denoted by $v\,\mbox{---}\,v'$. Walks and paths in $G$ are defined as in the directed case as sequences of consecutive edges with or without possible repetitions, respectively. 
The length of a walk is its number of edges. A walk of length $n$ in $G$ that starts and ends at the same node is called an {\em $n$-cycle} if it contains a path of length $n-1$. $G$ is called {\it connected} if for each pair $(v,v')$ of distinct nodes of $G$ there is a path of the form $v\,\mbox{---}\cdots \mbox{---}\,v'$.

\begin{definition}[{Neighbor graph; {\em cf.}~\cite[Section~2]{ScheicherThuswaldner03}}]\label{Graph}
Let $M\in \Z^{3 \times 3}$ and $\D \subset\Z^3$ be given in a way that
$T=T(M,\D)$ is an $ABC$-tile with neighbor set $\nS$. Define the directed labeled {\em neighbor graph} $G(\nS)$ as follows. The nodes of $G(\nS)$ are the neighbors $\nS$, and there is a labeled edge 
\begin{align}\label{eq:arrow}
\alpha\xrightarrow{d|d'}\alpha'\quad\text{  if and only if  }  M\alpha+d'-d=\alpha'  \text{ with }\alpha,\alpha' \in \nS \text{ and } d, d' \in \mathcal{D}.
\end{align}
\end{definition}

%Because $\D$ is a standard digit set, 
In \eqref{eq:arrow} the vector $d'$ is determined by $\alpha,\alpha',d$. Thus we will often just write $\alpha\xrightarrow{d}\alpha'$ instead of $\alpha\xrightarrow{d|d'}\alpha'$.  The notation $\alpha \in G(\nS)$ means that $\alpha$ is a node of $G(\nS)$ and $\alpha\xrightarrow{d}\alpha' \in G(\nS)$ means that
$\alpha\xrightarrow{d}\alpha'$ is an edge of $G(\nS)$. For walks we will use an analogous notation. 

Let $T=T(M,\D)$ be an $ABC$-tile. Because $\{T+z\;:\; z\in\Z^3 \}$ forms a tiling of $\R^3$, we have
\begin{equation}\label{boun_1}
\partial T=\bigcup_{\alpha\in\mathcal{S}}\B_{\alpha}.
\end{equation} 
Here $\mathcal{S}$ and $\B_{\alpha}$, $\alpha\in\nS$, are given by \eqref{eq:neigh} and \eqref{eq:bgamma}, respectively (note that $\Z[M,\D]=\Z^3$ in these definitions because $T$ is an $ABC$-tile). One can show (see {\it e.g.}~\cite[Proposition~2.2]{ScheicherThuswaldner03}) that the nonempty compact sets $\B_\alpha$, $\alpha\in\nS$, are uniquely determined by the set equations 
\begin{equation}\label{eq:bgammaseteq}
\B_\alpha=\bigcup_{
\begin{subarray}{c}d\in\D, \alpha'\in\nS \\ \alpha\xrightarrow{d} \alpha' \in G(\nS)\end{subarray}}M^{-1}(\B_{\alpha'}+d) \qquad(\alpha\in\nS).
\end{equation}
Here the union on the right hand side of \eqref{eq:bgammaseteq} is extended over all $d,\alpha'$ with $\alpha\xrightarrow{d}\alpha'\in G(\nS)$. The defining equation \eqref{eq:bgammaseteq} is an instance of a {\it graph-directed iterated function system}. These objects were first studied in \cite{MauldinWilliams88}. By \eqref{boun_1} and  \eqref{eq:bgammaseteq} the boundary $\partial T$ is determined by the graph $G(\nS)$. 

\begin{figure}[htbp]
\includegraphics[width=0.8\textwidth]{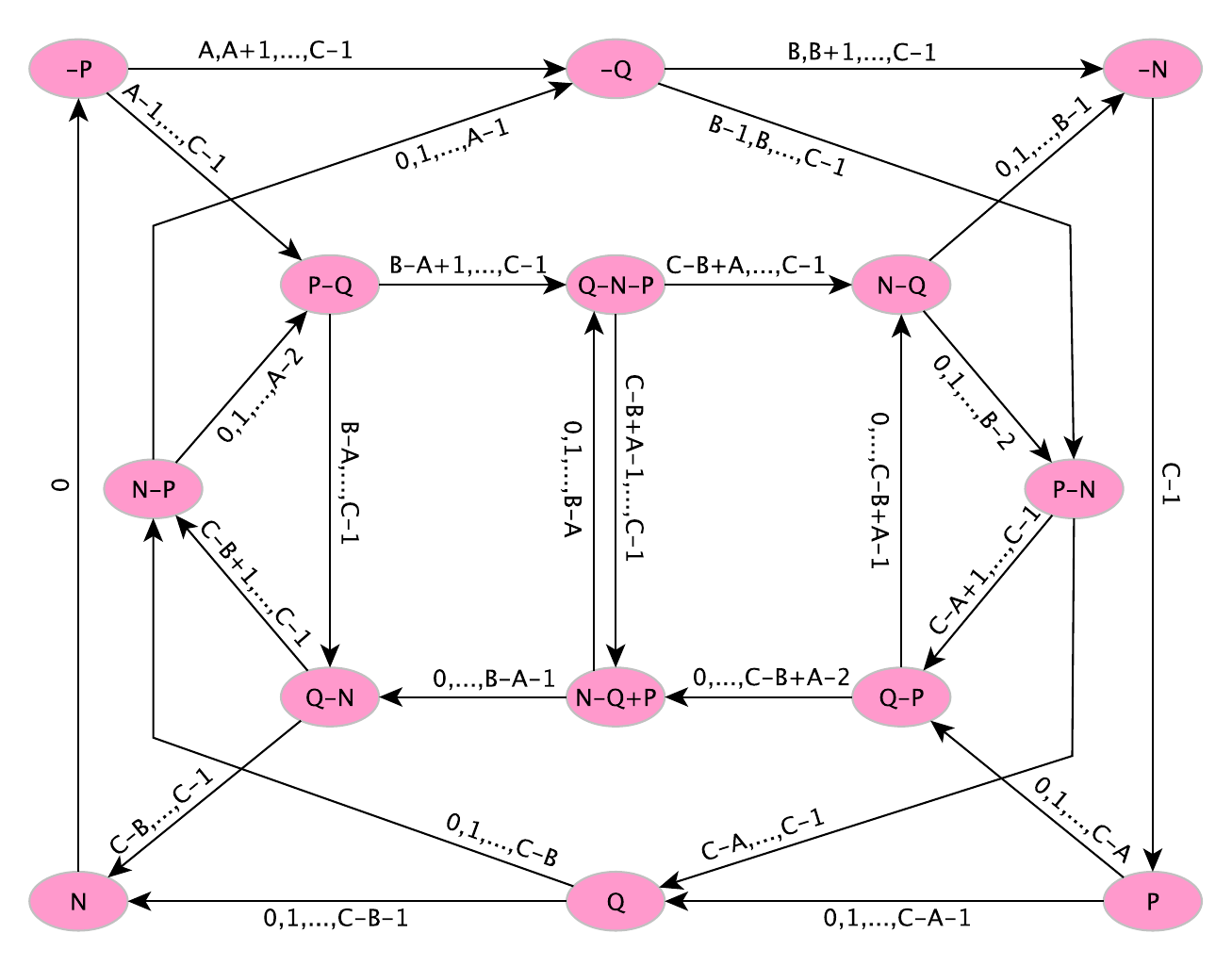}
\caption{
The neighbor graph $G(\nS)$ for an $ABC$-tile $T$ with $1\le A \le B<C$ having $14$ neighbors.
Here we set $P=(1,0,0)^t, ~Q=(A,1,0)^t,~ N=(B,A,1)^t$. To save space we write $\alpha\xrightarrow{e}\alpha'$ instead of $\alpha\xrightarrow{(e)_M}\alpha'$ in this figure (recall the notation \eqref{eq:Mnotation}). Multiple labels correspond to multiple edges. If an edge has labels $d,\ldots, d'$ with $d>d'$ then the edge has to be deleted.
\label{fig:Gamma2_1}}
\end{figure}

The set $\nS$ as well as the neighbor graph $G(\nS)$ of an $ABC$-tile $T=T(M,\D)$ can be calculated explicitly. In the present paper we are interested in $ABC$-tiles having $14$ neighbors  (observe the characterization in Remark~\ref{rem:2}).  In \cite[Section~2.4]{TZ:19} the following results have been proved.
Suppose that $T$ has $14$ neighbors. Then the neighbor set $\nS$ and the neighbor graph $G(\nS)$ are given as follows. Set 
\[
\nS_1=\{P, Q, N, Q-P, N-P, N-Q, N-Q+P\},
\]
where $P=(1,0,0)^t$,  $Q=(A,1,0)^t$, and $N=(B,A,1)^t$. Then the $ABC$-tile $T$ has the neighbors 
$\nS= \nS_1 \cup (-\nS_1)$. Moreover, in this case the neighbor graph $G(\nS)$ is given by the graph in Figure~\ref{fig:Gamma2_1}.

\begin{remark}
This neighbor graph is strongly related to the {\em de Bruijn graph} $N_4$ of binary words of length $4$ (see \cite[Section~3]{deBruijn:46}). Indeed, if we delete the nodes corresponding to the words $0000$ and $1111$ in $N_4$ we get the graph in Figure~\ref{fig:Gamma2_1} (apart from the edge labels). 
\end{remark}

\subsection{The Hata graph and Peano continua}\label{sec:hata}
Let $T=T(M,\D)$ be an $ABC$-tile.
The {\em Hata graph} $H(\nS)$ of the neighbors of $T$ is defined as follows. The nodes of $H(\nS)$ are the elements of $\nS$ and there is an undirected edge between two distinct elements $\alpha_1,\alpha_2 \in \nS$ if and only if $(T+\alpha_1) \cap (T+\alpha_2)\not=\emptyset$. For an $ABC$-tile with $14$ neighbors the Hata graph $H(\nS)$ is depicted in Figure~\ref{HataGraph}. 
It can be determined by using \cite[Lemma~2.16]{TZ:19} (see also \cite[Figure~9]{TZ:19}). The following lemma is a reformulation of some basic results from \cite[Section~2]{TZ:19}.

\begin{figure}[h]
\begin{center}
\includegraphics[width=0.35 \textwidth]{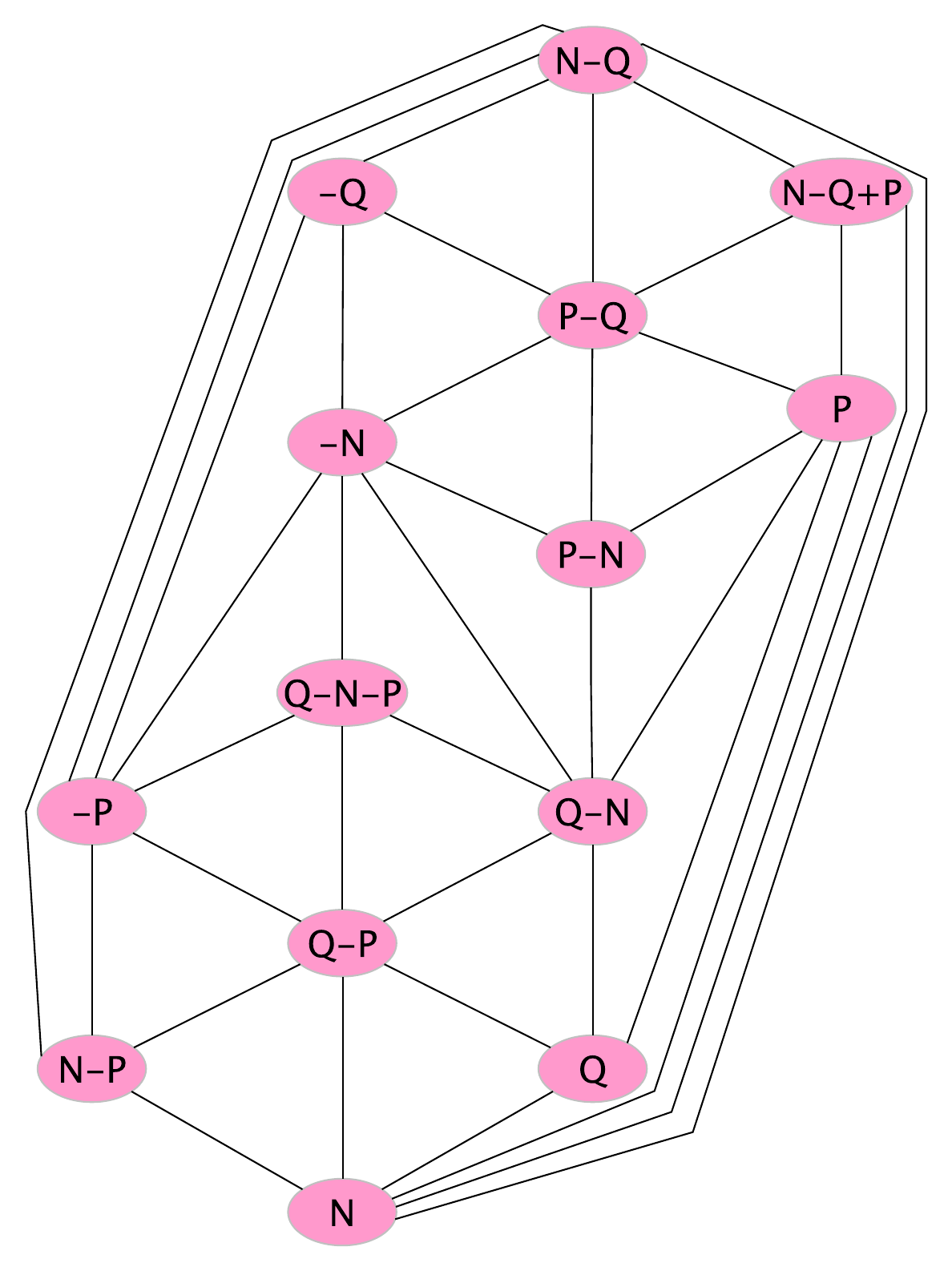} \quad
\includegraphics[trim=0 -60 0 0,width=0.35 \textwidth]{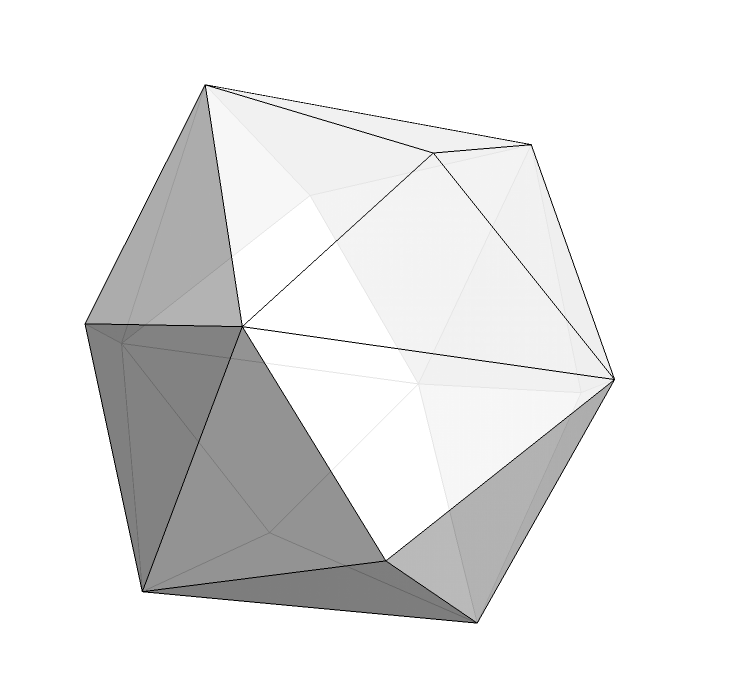}
\end{center}
\caption{The Hata graph $H(\nS)$ (left) which is isomorphic to the graph of vertices and edges of the so-called {\it tetrakis hexahedron}. The tetrakis hexahedron (right) is a {\em Catalan polyhedron} which is the dual of the truncated octahedron (see~{\it e.g.}~\cite[p.~284]{CBG:08}).
\label{HataGraph} } 
\end{figure}

\begin{lem}\label{lem:cycl}
Let $T$ be an $ABC$-tile with $14$ neighbors.  Let $\alpha_1,\alpha_2,\alpha_3 \in \mathbb{Z}^3\setminus\{0\}$ be mutually distinct. We have
\begin{itemize}
\item[(1)]  $\B_{\alpha_1} \not=\emptyset$ if and only if $\alpha_1$ is a node of $H(\nS)$.
\item[(2)]  $\B_{\{\alpha_1,\alpha_2\}}\not=\emptyset$ if and only if $\alpha_1 \relbar\!\relbar\!\relbar \alpha_2$ is an edge in $H(\nS)$.
\item[(3)] $\B_{\{\alpha_1,\alpha_2,\alpha_3\}}\not=\emptyset$  if and only if there is a 3-cycle with nodes $\alpha_1,\alpha_2,\alpha_3$ in $H(\nS)$. 
\item[(4)] If $\boldsymbol{\alpha}\subset\Z^3 \setminus \{0\}$ has more than three elements then $\B_{\boldsymbol{\alpha}} = \emptyset$.
\end{itemize}
\end{lem}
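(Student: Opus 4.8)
The plan is to reduce everything to the explicit description of the intersections of neighboring tiles contained in \cite[Lemma~2.16]{TZ:19} (compare \cite[Figure~9]{TZ:19}), which is precisely the input used to draw the Hata graph in Figure~\ref{HataGraph}; the lemma is then mostly a matter of translating that data into the combinatorial language of $H(\nS)$. First I would dispose of the formal directions. Assertion~(1) is immediate from the definitions: $\B_{\alpha_1}=T\cap(T+\alpha_1)\neq\emptyset$ holds if and only if $\alpha_1\in\nS$ (here we use that $\Z[M,\D]=\Z^3$ for an $ABC$-tile), and the elements of $\nS$ are by definition exactly the nodes of $H(\nS)$. For the remaining ``only if'' directions I would use the obvious monotonicity $\B_{\boldsymbol{\beta}}\supseteq\B_{\boldsymbol{\alpha}}$ whenever $\boldsymbol{\beta}\subseteq\boldsymbol{\alpha}$. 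Thus if $\B_{\{\alpha_1,\alpha_2\}}\neq\emptyset$, then $\B_{\alpha_1},\B_{\alpha_2}\neq\emptyset$, so $\alpha_1,\alpha_2$ are nodes, and moreover $(T+\alpha_1)\cap(T+\alpha_2)\supseteq\B_{\{\alpha_1,\alpha_2\}}\neq\emptyset$, which is exactly the defining condition for an edge of $H(\nS)$ between $\alpha_1$ and $\alpha_2$; this proves the ``only if'' part of~(2). Likewise, if $\B_{\{\alpha_1,\alpha_2,\alpha_3\}}\neq\emptyset$, then all three pair intersections $\B_{\{\alpha_i,\alpha_j\}}$ are nonempty, so by the ``only if'' part of~(2) all three pairs span edges of $H(\nS)$, and these three edges on the nodes $\alpha_1,\alpha_2,\alpha_3$ constitute a $3$-cycle, giving the ``only if'' part of~(3).

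The content lies in the converse directions, and this is where I would invoke the explicit computation of \cite{TZ:19}. The crux is that pairwise contact of $T$, $T+\alpha_1$, $T+\alpha_2$ does \emph{not} in general force a common point, so one genuinely needs the geometry of $ABC$-tiles with $14$ neighbors rather than a soft set-theoretic argument. By \cite[Lemma~2.16]{TZ:19} the complete list of nonempty double intersections $\B_{\{\alpha_1,\alpha_2\}}$ and triple intersections $\B_{\{\alpha_1,\alpha_2,\alpha_3\}}$ is known explicitly in terms of $\nS=\nS_1\cup(-\nS_1)$. I would then verify, by matching this list against $H(\nS)$, that the nonempty double intersections are in bijection with the $36$ edges and the nonempty triple intersections are in bijection with the $24$ triangular $3$-cycles of $H(\nS)$ (the graph of the tetrakis hexahedron, dual to the truncated octahedron), which yields the ``if'' parts of~(2) and~(3). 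Finally, for~(4) the same list shows that no four of the tiles $T,T+\alpha_1,T+\alpha_2,\dots$ share a common point, equivalently that at most four tiles of the lattice tiling $\{T+z:z\in\Z^3\}$ meet at any single point; hence $\B_{\boldsymbol{\alpha}}=\emptyset$ as soon as $\#\boldsymbol{\alpha}>3$.

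The main obstacle is precisely these converse directions of~(2) and~(3): extracting an honest common point of three (respectively four) tiles from the mere pairwise (respectively triple-wise) contact data. This step cannot be carried out by formal inclusions and is exactly what rests on the explicit neighbor and intersection analysis of \cite{TZ:19} (ultimately on the neighbor graph $G(\nS)$ of Figure~\ref{fig:Gamma2_1} and the graph-directed set equations~\eqref{eq:bgammaseteq} governing the sets $\B_\alpha$); the rest of the lemma is bookkeeping that identifies the resulting finite incidence data with the node/edge/triangle structure of $H(\nS)$.
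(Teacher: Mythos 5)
Your proposal is correct and follows essentially the same route as the paper: the formal ``only if'' directions via monotonicity of the sets $\B_{\boldsymbol{\alpha}}$, and the substantive ``if'' directions of (2)--(3) together with (4) by appealing to the explicit intersection list of \cite[Lemma~2.16]{TZ:19} and matching it against the edges and $3$-cycles of $H(\nS)$. If anything, you are slightly more careful than the paper in flagging that the ``if'' direction of (2) (pairwise contact of $T$, $T+\alpha_1$, $T+\alpha_2$ forcing a common point) is not a formal consequence of the edge definition but genuinely rests on the computation in \cite{TZ:19}.
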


\begin{proof}
Item (1) follows because the nodes of $H(\nS)$ are the neighbors of $T$.
Item (2) follows from the definition of the edges of $H(\nS)$.
Items (3) and (4) follow from \cite[Lemma~2.16]{TZ:19}. For (3) one just has to check that the nodes of the graph $G_3(\nS)$ defined in \cite[Figure~6]{TZ:19} are in one-to-one correspondence with the $3$-cycles of $H(\nS)$.
\end{proof}

The Hata graph $H(\nS)$ and some other Hata graphs are used in the proof of the following lemma.

\begin{lem}\label{lem:HataPeano}
Let $T$ be an $ABC$-tile with $14$ neighbors. Then $T$ and $\partial T$ are Peano continua.
\end{lem}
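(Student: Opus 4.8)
The plan is to show that both $T$ and $\partial T$ are Peano continua, that is, that they are nonempty, compact, connected, and locally connected metric spaces. Compactness is immediate: $T$ is compact by definition, and $\partial T$ is a closed subset of the compact set $T$, hence compact. Nonemptiness is clear as well. So the real content is to establish \emph{connectedness} and \emph{local connectedness} for each of the two spaces.

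\textbf{Connectedness.} First I would treat $\partial T$. By \eqref{boun_1} we have $\partial T = \bigcup_{\alpha \in \nS} \B_\alpha$, so $\partial T$ is a finite union of the nonempty compact sets $\B_\alpha$. To conclude that this union is connected, I would argue that the Hata graph $H(\nS)$ is connected and that adjacency in $H(\nS)$ forces the corresponding pieces $\B_{\alpha_1}, \B_{\alpha_2}$ to intersect. Indeed, by Lemma~\ref{lem:cycl}(2), an edge $\alpha_1 \relbar\!\relbar\!\relbar \alpha_2$ in $H(\nS)$ is equivalent to $\B_{\{\alpha_1,\alpha_2\}} = T \cap (T+\alpha_1) \cap (T+\alpha_2) \neq \emptyset$; in particular $\B_{\alpha_1} \cap \B_{\alpha_2} \supseteq \B_{\{\alpha_1,\alpha_2\}} \neq \emptyset$. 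Hence if each individual $\B_\alpha$ is connected and $H(\nS)$ is connected as a graph, then a standard ``connected union along a connected nerve'' argument (chaining through intersecting pieces) shows $\partial T$ is connected. Connectedness of $H(\nS)$ can be read off from Figure~\ref{HataGraph}. For $T$ itself, connectedness follows from that of $\partial T$ together with the fact that $T$ is the closure of its interior (Lemma~\ref{lem:LW1}); alternatively, the set equation \eqref{eq:setequation} exhibits $T$ as a union of the pieces $M^{-1}(T+d)$ whose nerve is connected because $\D$ is a collinear (hence ``consecutively overlapping'') digit set, and one invokes the standard criterion (Hata~\cite{Hata85}, Kirat and Lau~\cite{KiratLau00}) that the attractor of such an IFS is connected.

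\textbf{Local connectedness.} This is the step I expect to be the main obstacle, and it is where the self-affine structure must be exploited. The key tool is a theorem of Hata~\cite{Hata85}: the attractor of an iterated function system (or a graph-directed system) is locally connected, and in fact a Peano continuum, as soon as it is connected. For $T$ this applies directly via \eqref{eq:setequation}. For $\partial T$, the relevant object is the graph-directed system \eqref{eq:bgammaseteq} governing the pieces $\B_\alpha$: since $\partial T$ (and each $\B_\alpha$) is the attractor of a connected graph-directed IFS, Hata's criterion upgrades connectedness to local connectedness. Concretely, I would verify that the neighbor graph $G(\nS)$ of Figure~\ref{fig:Gamma2_1} is connected (so that the pieces $\B_\alpha$ do not split into independent clusters), and that the associated ``intersection nerves'' at every level are connected — this is exactly what Lemma~\ref{lem:cycl} encodes combinatorially. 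Then the Hata-type theorem yields that $\partial T$ is a Peano continuum, and local connectedness of $T$ follows similarly from the connected IFS \eqref{eq:setequation}.

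The hard part will be checking the hypotheses of Hata's local-connectedness criterion at the level of the graph-directed system for $\partial T$: one must ensure that the combinatorial connectivity recorded by $H(\nS)$ and $G(\nS)$ genuinely translates into the topological hypothesis Hata needs (roughly, that consecutive pieces in the subdivision always overlap, so that small images of $T$ and of the $\B_\alpha$ under high iterates of $M^{-1}$ cannot become disconnected on a small scale). Once connectedness of both spaces is in hand, local connectedness is essentially automatic from the self-affinity, and the conclusion that $T$ and $\partial T$ are Peano continua follows. I would therefore organize the proof as: (i) compactness and nonemptiness, trivially; (ii) connectedness of $\partial T$ via connectedness of $H(\nS)$ and Lemma~\ref{lem:cycl}(2), then connectedness of $T$; (iii) local connectedness of both via Hata's theorem applied to \eqref{eq:setequation} and \eqref{eq:bgammaseteq}.
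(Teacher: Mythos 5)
Your proposal is correct and follows essentially the same route as the paper: Hata's criterion applied to \eqref{eq:setequation} for $T$, a graph-directed Hata-type criterion applied to \eqref{eq:bgammaseteq} for the pieces $\B_\alpha$ (with the nonemptiness of the relevant intersections read off $H(\nS)$ via Lemma~\ref{lem:cycl}), and then $\partial T$ obtained as a connected finite union of Peano continua. The one point to sharpen is that the hypothesis to verify is not connectedness of $G(\nS)$ but, for each fixed $\alpha\in\nS$, connectedness of the nerve of the level-one pieces $M^{-1}(\B_{\alpha'}+d)$ covering $\B_\alpha$; the paper computes these nerves explicitly (they turn out to be lines or single nodes), which is exactly the ``hard part'' you flag.
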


\begin{proof}
Since $P\in \nS$, we have $M^{-1}\B_P = M^{-1}T\cap M^{-1}(T+P) \not=\emptyset$. Thus $T$ is a Peano continuum by \eqref{eq:setequation} and \cite[Theorem~4.6]{Hata85}. 

Next we prove that $\B_\alpha$ is a Peano continuum for each $\alpha\in\nS$. For $\alpha\in\nS$ let 
\[
Z_\alpha = \{ M^{-1}(\B_{\alpha'}+d) \;:\; d\in\D, \alpha'\in\nS \hbox{ such that } \alpha\xrightarrow{d} \alpha' \in G(\nS) \hbox{ exists}\}
\]
be the collection of the sets in the union on the right hand side of \eqref{eq:bgammaseteq}. The {\em Hata graph} of $Z_\alpha$ is the undirected graph $H_\alpha$ whose nodes are the elements of $Z_\alpha$ and that has an edge between two distinct elements of $b_1,b_2 \in Z_\alpha$ if and only if $b_1\cap b_2 \not=\emptyset$.  According to \cite[Lemma~3.3]{TZ:19} (see also~\cite[Theorem~4.1]{LuoAkiyamaThuswaldner04}), to establish the claim we have to prove that $H_\alpha$ is connected for each $\alpha\in \nS$. To this matter we have to construct the graphs $H_\alpha$. This is done by checking whether intersections of the form $b_1\cap b_2$ with distinct $b_1,b_2 \in Z_\alpha$ are empty or not.  Since 
$b_1=M^{-1}((T+d_1)\cap(T+d_1+\alpha_1))$ and $b_2=M^{-1}((T+d_2)\cap(T+d_2+\alpha_2))$
with some $d_1,d_2\in\D$ and some $\alpha_1,\alpha_2\in \nS$,
\begin{equation}\label{eq:44}
b_1 \cap b_2 = M^{-1}((T+d_1)\cap(T+d_1+\alpha_1)\cap(T+d_2)\cap(T+d_2+\alpha_2)).
\end{equation}
Set $\boldsymbol{\alpha}=\{ \alpha_1, d_2-d_1, \alpha_2+d_2-d_1 \}\setminus\{0\}$. Then
$b_1\cap b_2$ is an affine image of $\B_{\boldsymbol{\alpha}}$,
where $|\boldsymbol{\alpha}|\in\{2,3\}$ depending on whether the four translates $\{d_1,d_1+\alpha_1,d_2,d_2+\alpha_2\}$  in \eqref{eq:44} are mutually distinct or not. But if $\B_{\boldsymbol{\alpha}}$ is empty or not can be read off the Hata graph $H(\nS)$ in view of Lemma~\ref{lem:cycl}. For $\alpha=P$ we see from Figure~\ref{fig:Gamma2_1} that the nodes of $H_P$ are
\[
Z_P =\{M^{-1}(\B_{Q} + (e)_M)\;:\; 0\le e\le C-A-1\} \cup \{M^{-1}(\B_{Q-P} + (e)_M)\;:\; 0\le e\le C-A\}.
\]
Let $b_1,b_2\in Z_P$ be distinct. Inspecting $H(\nS)$  (or directly from \cite[Corollary~3.23]{TZ:19}) we see that the Hata graph $H_P$ is the line given in Figure~\ref{fig:hataalpha}, and, hence, $H_P$ is connected.

\begin{figure}[h]
\includegraphics[width=10cm]{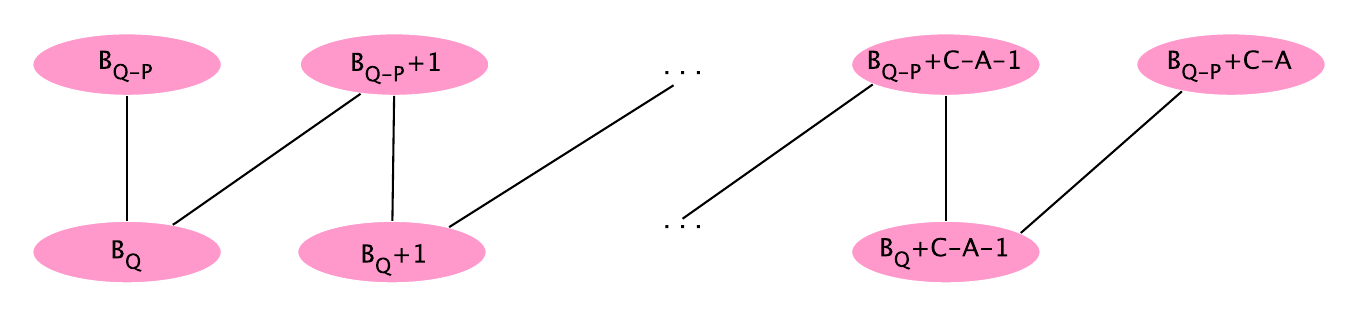}
%\tiny\xymatrix{
%M^{-1}\B_{Q-P} \ar@{-}[d] & M^{-1}(\B_{Q-P} + 1) \ar@{-}[d]&\cdots&  M^{-1}(\B_{Q-P} + C-A) \ar@{-}[d]&M^{-1}(\B_{Q-P} + C-A+1)&
%\\ 
%M^{-1}\B_{Q} \ar@{-}[ru] & M^{-1}(\B_{Q} + 1) \ar@{-}[ru]&\cdots  \ar@{-}[ru]& M^{-1}(\B_{Q-P} + C-A) \ar@{-}[ru]&
% & 
%}
\caption{The Hata graph $H_P$ (we omit the multiplication by $M^{-1}$ and write $e$ instead of $(e)_M$ to save space). \label{fig:hataalpha}}
\end{figure}

Analogously we see that $H_\alpha$ is a line or a single node and, hence, connected for each $\alpha\in\nS\setminus\{P\}$ as well. Thus \cite[Lemma~3.3]{TZ:19} yields that $\B_\alpha$ is a Peano continuum\footnote{It is easy to see from \eqref{eq:bgammaseteq} that $\B_\alpha$ is not a single point ($\alpha\in\nS$).}
for each $\alpha\in\nS$.

Since $T$ is connected, $\partial T$ is connected as well by \cite[Theorem~1.2]{LuoAkiyamaThuswaldner04}. Therefore, by \eqref{boun_1}, $\partial T$ is a connected union of finitely many Peano continua and, hence, a Peano continuum. 
\end{proof}

The fact that $\partial T$ is a Peano continuum is not used in the present paper. However, it is tacitly used in \cite[Section~3.4]{TZ:19} without giving a formal proof (although in \cite[Corollary~3.23 and Lemma~3.3]{TZ:19} all ingredients for the proof are provided). Thus we decided to prove it here before we state the following version of the main result of \cite{TZ:19}, which is formulated by using $H(\nS)$. 

\begin{prop}\label{lem:TZ:19}
Let $T$ be an $ABC$-tile with $14$ neighbors and let $\alpha_1,\alpha_2,\alpha_3 \in \mathbb{Z}^3\setminus\{0\}$ be mutually distinct. Then the following assertions hold.
\begin{itemize}
\item[(1)]  $\B_{\alpha_1}$ is a $2$-ball if $\alpha_1 \in \nS$, and  $\B_{\alpha_1} =\emptyset$ otherwise.
\item[(2)] $\B_{\{\alpha_1,\alpha_2\}}$ is a $1$-ball if there is an edge $\alpha_1 \relbar\!\relbar\!\relbar \alpha_2$ in $H(\nS)$, and  $\B_{\{\alpha_1,\alpha_2\}}=\emptyset$ otherwise. Moreover, for each $\alpha_1\in \nS$ we have
\[
\bigcup_{\alpha_2: \; \alpha_1 \relbar\!\relbar\!\relbar \alpha_2 \in H(\nS)} \B_{\{\alpha_1,\alpha_2\}}
\simeq \mathbb{S}^1.
\]
\item[(3)] $\B_{\{\alpha_1,\alpha_2,\alpha_3\}}$ is a $0$-ball if there is a 3-cycle $\alpha_1 \relbar\!\relbar\!\relbar \alpha_2  \relbar\!\relbar\!\relbar \alpha_3  \relbar\!\relbar\!\relbar \alpha_1$ in $H(\nS)$, and  $\B_{\{\alpha_1,\alpha_2,\alpha_3\}}=\emptyset$ otherwise. 
\item[(4)] If $\boldsymbol{\alpha}\subset\Z^3 \setminus \{0\}$ has more than three elements then $\B_{\boldsymbol{\alpha}} = \emptyset$.
\end{itemize}
\end{prop}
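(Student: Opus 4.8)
The plan is to separate, in each of the four items, the \emph{emptiness} assertion from the assertion about the \emph{topological type} of the nonempty set. The emptiness assertions are already recorded in Lemma~\ref{lem:cycl}, while the topological-type assertions are a reformulation, in terms of the Hata graph $H(\nS)$, of the cell decomposition of $\partial T$ established in \cite{TZ:19}. So the proof is mostly a matter of matching vocabulary rather than of new geometry.

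First I would dispose of the emptiness parts. The second clauses of (1), (2) and (3) are literally Lemma~\ref{lem:cycl}(1), (2) and (3), and item (4) is literally Lemma~\ref{lem:cycl}(4); nothing further is needed there. For the topological-type clauses I would invoke the main result of \cite{TZ:19}, namely that for an $ABC$-tile with $14$ neighbors the boundary $\partial T$ is homeomorphic to $\mathbb{S}^2$ and carries a \emph{regular} cell decomposition in which the closed $2$-cells are the faces $\B_{\alpha_1}$ with $\alpha_1\in\nS$, the closed $1$-cells are the nonempty sets $\B_{\{\alpha_1,\alpha_2\}}$, and the $0$-cells are the nonempty sets $\B_{\{\alpha_1,\alpha_2,\alpha_3\}}$. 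By Lemma~\ref{lem:cycl} these three families are indexed exactly by the nodes, edges, and $3$-cycles of $H(\nS)$, which is precisely the index set appearing in (1)--(3). In a regular cell decomposition every closed $2$-cell is a $2$-ball, every closed $1$-cell is a $1$-ball, and every $0$-cell is a single point (a $0$-ball), and this yields the first clauses of (1), (2) and (3). I would cite the pertinent statements of \cite{TZ:19} for this, rather than reprove them.

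Next, for the $\mathbb{S}^1$ assertion in (2) I would show that, for a fixed $\alpha_1\in\nS$, the union
\[
\bigcup_{\alpha_2:\;\alpha_1\relbar\!\relbar\!\relbar\alpha_2\in H(\nS)} \B_{\{\alpha_1,\alpha_2\}}
\]
coincides with the manifold boundary $\partial\B_{\alpha_1}$ of the $2$-ball $\B_{\alpha_1}$. Indeed, a point $x\in\B_{\alpha_1}\subset\partial T$ lies on $\partial\B_{\alpha_1}$ exactly when it is not an interior point of the face, i.e.\ exactly when $x$ also belongs to some other face $\B_{\alpha_2}$; by \eqref{boun_1} and the definition of $\B_{\{\alpha_1,\alpha_2\}}$ this happens if and only if $x\in\B_{\{\alpha_1,\alpha_2\}}$ for some $\alpha_2$ adjacent to $\alpha_1$ in $H(\nS)$. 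Since $\B_{\alpha_1}$ is a $2$-ball by (1), its manifold boundary is homeomorphic to $\mathbb{S}^1$, which is the desired conclusion.

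The one step that genuinely relies on the machinery of \cite{TZ:19}, and which I regard as the main obstacle, is the \emph{regularity} of the decomposition: one must know that each face $\B_{\alpha_1}$ is an embedded disk with no self-identifications, and that the boundary arcs $\B_{\{\alpha_1,\alpha_2\}}$ surrounding it glue up into a \emph{single} circle rather than into several circles or a more complicated one-dimensional complex. This is exactly what the sphere-decomposition argument of \cite{TZ:19}---built on the combinatorics of $H(\nS)$, which realizes the boundary complex of the tetrakis hexahedron dual to the truncated octahedron---supplies, and it is what makes the clean translation into the statement above possible.
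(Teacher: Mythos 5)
Your proposal is correct and follows essentially the same route as the paper: the emptiness clauses are delegated to Lemma~\ref{lem:cycl}, and the topological-type clauses are imported from \cite{TZ:19} (the paper cites \cite[Theorem~1.1~(2)]{TZ:19} for the $2$-balls, \cite[Proposition~3.10~(2)]{TZ:19} for the $1$-balls and the $\mathbb{S}^1$ statement, and \cite[Section~3.1]{TZ:19} for the singletons). The only genuine difference is your self-contained derivation of the $\mathbb{S}^1$ claim by identifying $\bigcup_{\alpha_2}\B_{\{\alpha_1,\alpha_2\}}$ with the manifold boundary of the $2$-ball $\B_{\alpha_1}$ inside $\partial T\simeq\mathbb{S}^2$; this argument is sound (the reverse inclusion uses that a nonempty open subset of a $2$-manifold cannot sit inside the arc $\B_{\{\alpha_1,\alpha_2\}}$), whereas the paper simply reads the statement off \cite[Proposition~3.10~(2)]{TZ:19}.
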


\begin{proof}
Assertion (1) is the content of \cite[Theorem~1.1~(2)]{TZ:19}.
Assertion~(2) follows from \cite[Proposition~3.10~(2)]{TZ:19} and Lemma~\ref{lem:cycl}~(2).
To see assertion~(3) observe that in \cite[Section~3.1]{TZ:19} it is shown that $\B_{\{\alpha_1,\alpha_2,\alpha_3\}}$ is either a singleton or empty. Thus~(3) follows from Lemma~\ref{lem:cycl}~(3). Assertion~(4) is just Lemma~\ref{lem:cycl}~(4).
\end{proof}

\subsection{On the topology of certain subsets of $\partial T$}\label{sec:O}
Let $M\in \Z^{3 \times 3}$ and $\D \subset\Z^3$ be given in a way that
$T=T(M,\D)$ is an $ABC$-tile. Suppose that $T$ has $14$ neighbors. In what follows we will need precise information on the topology of the subsets
\begin{equation}\label{eq:UR}
U(R) = \bigcup_{\alpha\in R} \B_{\alpha} \qquad(R \subseteq \nS)
\end{equation}
of the boundary $\partial T$.

Let $O$ be a truncated octahedron whose sides are labeled by the elements of $\nS$ in the way shown on the left hand side of Figure~\ref{fig:Honey}, with the convention that the side opposite to the side labeled with $\alpha\in \nS$ is labeled with $-\alpha$. We denote the face of $O$ labeled with $\alpha\in\nS$ by $O_\alpha$. Moreover, for $\boldsymbol{\alpha} \subseteq \nS$ we define the intersections
\begin{equation}\label{eq:Oa}
O_{\boldsymbol{\alpha}} = \bigcap_{\alpha\in \boldsymbol{\alpha}} O_\alpha
\end{equation} 
with the convention that $O_\emptyset=O$.
\begin{figure}[h]
\includegraphics[trim=0 60 0 0, width=5.5cm]{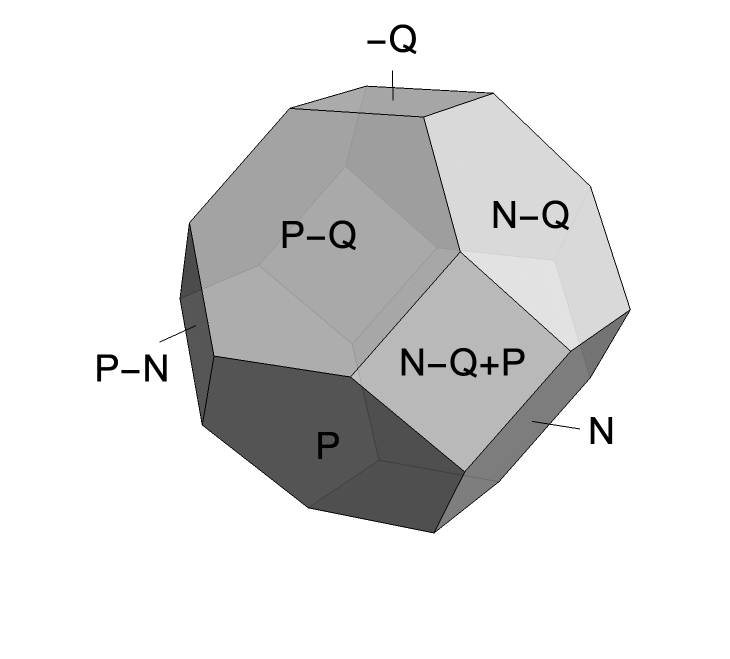}
\includegraphics[trim=0 60 0 0, width=5cm]{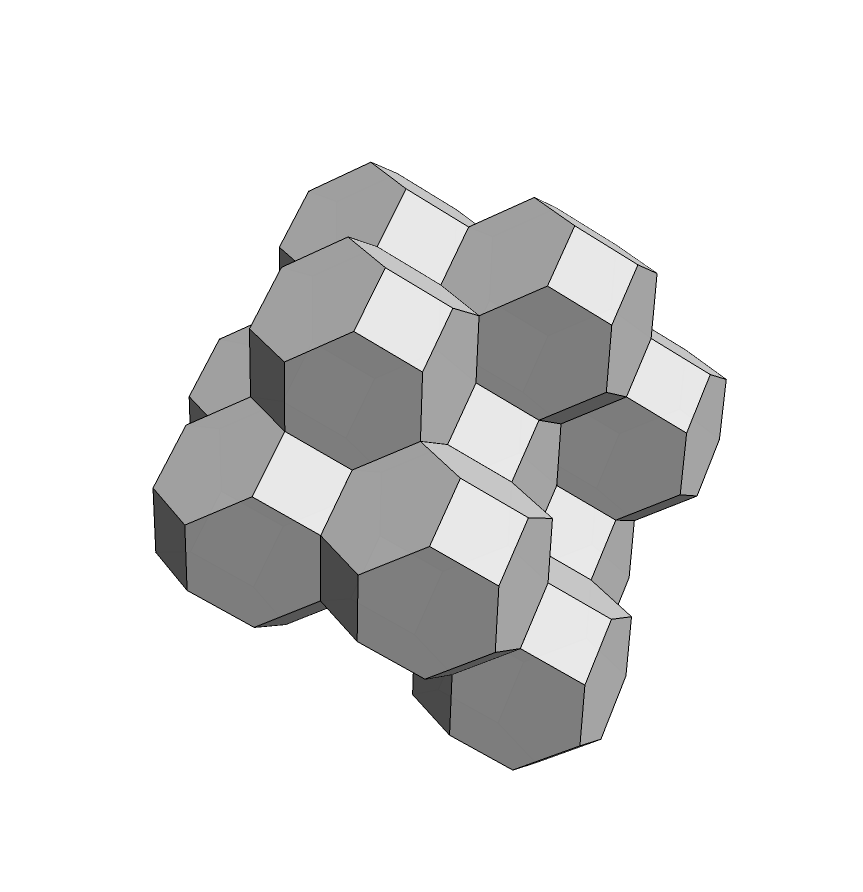}
\caption{A truncated octahedron and a patch of the bitruncated cubic honeycomb. \label{fig:Honey}}
\end{figure}
It is well-known that $O$ induces a tiling of the $3$-dimensional Euclidean space: the so-called {\em bitruncated cubic honeycomb} (see the left hand side of Figure~\ref{fig:Honey} for a patch of this tiling). This tiling has the same ``intersection structure'' as $\{T + z \,:\, z\in \Z^3\}$. In particular, comparing the labeled octahedron $O$ from Figure~\ref{fig:Honey} with Proposition~\ref{lem:TZ:19} we see that the following result holds.

\begin{lem}\label{lem:OB}
Let $T$ be an $ABC$-tile with $14$ neighbors. 
For each nonempty $\boldsymbol{\alpha} \subseteq \nS$ we have
\[
\B_{\boldsymbol{\alpha}} \simeq O_{\boldsymbol{\alpha}}.
\]
\end{lem}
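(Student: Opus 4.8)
The plan is to prove Lemma~\ref{lem:OB} by matching each nonempty intersection $\B_{\boldsymbol{\alpha}}$ with the corresponding face-intersection $O_{\boldsymbol{\alpha}}$ of the truncated octahedron, using the topological classification from Proposition~\ref{lem:TZ:19} together with a combinatorial comparison of nonemptiness patterns. The key observation is that both sides of the claimed homeomorphism are controlled entirely by the combinatorics encoded in the Hata graph $H(\nS)$ (which, by Figure~\ref{HataGraph}, is the edge graph of the tetrakis hexahedron, the dual of the truncated octahedron). So I would first reduce to a finite case-check indexed by the size $\#\boldsymbol{\alpha}$.

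First I would dispose of the cases $\#\boldsymbol{\alpha} \ge 4$: by Proposition~\ref{lem:TZ:19}(4) we have $\B_{\boldsymbol{\alpha}}=\emptyset$, and correspondingly no four faces of a truncated octahedron share a common point, so $O_{\boldsymbol{\alpha}}=\emptyset$ as well; the empty homeomorphism is trivial. For $\#\boldsymbol{\alpha}=1$, say $\boldsymbol{\alpha}=\{\alpha\}$ with $\alpha\in\nS$, Proposition~\ref{lem:TZ:19}(1) gives that $\B_\alpha$ is a $2$-ball, and $O_\alpha$ is a (hexagonal or square) face of $O$, hence also a $2$-ball; any two $2$-balls are homeomorphic, so $\B_\alpha\simeq O_\alpha$. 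Similarly, for $\#\boldsymbol{\alpha}=2$, Proposition~\ref{lem:TZ:19}(2) together with Lemma~\ref{lem:cycl}(2) tells me $\B_{\{\alpha_1,\alpha_2\}}$ is a $1$-ball exactly when $\alpha_1\relbar\!\relbar\!\relbar\alpha_2$ is an edge of $H(\nS)$ and is empty otherwise; I would then verify that two faces $O_{\alpha_1},O_{\alpha_2}$ of $O$ meet in an edge (a $1$-ball) precisely when the corresponding nodes are adjacent in the tetrakis hexahedron graph, and meet in the empty set otherwise. The $\#\boldsymbol{\alpha}=3$ case is handled in the same spirit via Proposition~\ref{lem:TZ:19}(3): $\B_{\{\alpha_1,\alpha_2,\alpha_3\}}$ is a single point exactly when the three nodes form a $3$-cycle in $H(\nS)$, and three faces of $O$ meet in a vertex ($0$-ball) under exactly the matching combinatorial condition.

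The crux of the argument, and the step I expect to require the most care, is establishing that the incidence pattern of the faces $O_\alpha$ of the truncated octahedron agrees node-for-node and edge-for-edge with $H(\nS)$. This is the claim that $O$ ``has the same intersection structure'' as the self-affine tiling, and it rests on the duality between the truncated octahedron and the tetrakis hexahedron: the faces of $O$ correspond to vertices of the dual polyhedron, two faces of $O$ are adjacent along an edge iff the dual vertices are joined by an edge, and three faces of $O$ meet at a vertex iff the dual vertices bound a triangular face of the tetrakis hexahedron (equivalently, form a $3$-cycle in its edge graph). Since Figure~\ref{HataGraph} asserts that $H(\nS)$ is isomorphic to the edge graph of the tetrakis hexahedron, the labeling of the faces of $O$ by $\nS$ in Figure~\ref{fig:Honey} can be chosen to realize this isomorphism, matching antipodal faces with $\pm\alpha$ as stipulated.

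With the incidence correspondence in hand, the proof concludes by combining the two matched classifications: for every nonempty $\boldsymbol{\alpha}\subseteq\nS$, the set $\B_{\boldsymbol{\alpha}}$ and the face-intersection $O_{\boldsymbol{\alpha}}$ are both empty, or both $0$-balls, or both $1$-balls, or both $2$-balls, or (for $\boldsymbol{\alpha}=\emptyset$) both $3$-balls, governed by the identical combinatorial condition read off from $H(\nS)$ and its dual polyhedron. Since cells of the same dimension are homeomorphic to one another, in each case $\B_{\boldsymbol{\alpha}}\simeq O_{\boldsymbol{\alpha}}$, which is the assertion. The only genuine content beyond bookkeeping is the duality incidence argument of the previous paragraph; everything else is an application of Proposition~\ref{lem:TZ:19} and Lemma~\ref{lem:cycl}.
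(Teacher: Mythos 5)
Your proposal is correct and follows essentially the same route as the paper, which states the lemma as an immediate consequence of comparing the face-labeling of the truncated octahedron in Figure~\ref{fig:Honey} with Proposition~\ref{lem:TZ:19}; your write-up merely makes explicit the dimension-by-dimension case check and the incidence match between $H(\nS)$ and the face-adjacency structure of $O$ via tetrakis-hexahedron duality (including the needed fact that every $3$-cycle in that edge graph bounds a triangular face). One small caveat: your parenthetical claim that for $\boldsymbol{\alpha}=\emptyset$ both sides are $3$-balls is outside the scope of the lemma (which assumes $\boldsymbol{\alpha}\neq\emptyset$) and would be circular here, since $T\simeq\mathbb{D}^3$ is only established later in Theorem~\ref{thm:ball}.
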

Moreover, we get the following topological characterization of the sets $U(R)$.

\begin{lem}\label{eq:UrO}
Let $T$ be an $ABC$-tile with $14$ neighbors. Let $R \subseteq \nS$ be given. Then
\begin{equation}\label{eq:Uprime}
U(R) \simeq \bigcup_{\alpha\in R} O_{\alpha}.
\end{equation}
Here $U(R)$ is as in \eqref{eq:UR}.
\end{lem}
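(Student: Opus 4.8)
The plan is to promote the cell-by-cell homeomorphisms of Lemma~\ref{lem:OB} to a single homeomorphism $h\colon \partial T \to \partial O$ that carries each closed cell $\B_{\boldsymbol{\alpha}}$ onto the corresponding face $O_{\boldsymbol{\alpha}}$ of the truncated octahedron, and then to restrict $h$ to the subcomplex $U(R)$. Since $U(R)=\bigcup_{\alpha\in R}\B_\alpha$ is a union of closed $2$-cells together with all their faces, and $\bigcup_{\alpha\in R}O_\alpha$ is the corresponding union of faces of $O$, such an $h$ restricts to the desired homeomorphism $U(R)\simeq\bigcup_{\alpha\in R}O_\alpha$.

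First I would record the combinatorial backbone. By Proposition~\ref{lem:TZ:19} the nonempty sets $\B_{\boldsymbol{\alpha}}$ with $|\boldsymbol{\alpha}|=3-k$ are $k$-balls ($k\in\{0,1,2\}$), and by Lemma~\ref{lem:cycl} their nonemptiness is governed by $H(\nS)$ in exactly the same way as the faces, edges and vertices of $O$ are governed by the incidences in the boundary of the truncated octahedron; this is precisely the matching encoded in the labeling of $O$ from Figure~\ref{fig:Honey}. Hence $\boldsymbol{\alpha}\mapsto(\B_{\boldsymbol{\alpha}},O_{\boldsymbol{\alpha}})$ sets up a bijection between the cells of the decomposition $\{\B_\alpha\}_{\alpha\in\nS}$ of $\partial T$ and the cells of $\partial O$, and it is a face-poset isomorphism, because for both decompositions the closed cell indexed by $\boldsymbol{\alpha}$ is a face of the one indexed by $\boldsymbol{\beta}$ if and only if $\boldsymbol{\beta}\subseteq\boldsymbol{\alpha}$. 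As every closed cell is a genuine ball, both decompositions are \emph{regular} CW complexes on the $2$-sphere (recall $\partial T\simeq\mathbb{S}^2$).

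Next I would build $h$ by induction over the skeleta, which is the standard way to realize a poset isomorphism of regular CW complexes by a homeomorphism. On the $0$-skeleton, send the point $\B_{\{\alpha_1,\alpha_2,\alpha_3\}}$ to the vertex $O_{\{\alpha_1,\alpha_2,\alpha_3\}}$. On each $1$-cell, $\B_{\{\alpha_1,\alpha_2\}}$ is an arc whose two endpoints are already matched with those of the arc $O_{\{\alpha_1,\alpha_2\}}$, so extend by any endpoint-preserving homeomorphism of arcs. Finally, for each $\alpha\in\nS$, Proposition~\ref{lem:TZ:19}(2) shows that $\partial\B_\alpha=\bigcup_{\alpha_2}\B_{\{\alpha,\alpha_2\}}\simeq\mathbb{S}^1$, and the map already constructed on the $1$-skeleton restricts to a homeomorphism of this circle onto $\partial O_\alpha$; by Alexander's trick (every homeomorphism of $\mathbb{S}^1$ extends over $\mathbb{D}^2$) this extends to a homeomorphism of the disk $\B_\alpha$ onto $O_\alpha$. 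Carrying this out for all $\alpha\in\nS$ yields $h$, which by construction satisfies $h(\B_{\boldsymbol{\alpha}})=O_{\boldsymbol{\alpha}}$ for every $\boldsymbol{\alpha}$.

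The conclusion is then immediate: restricting $h$ to $U(R)=\bigcup_{\alpha\in R}\B_\alpha$ gives a homeomorphism onto $\bigcup_{\alpha\in R}h(\B_\alpha)=\bigcup_{\alpha\in R}O_\alpha$, which is \eqref{eq:Uprime}. I expect the main obstacle to be the bookkeeping needed to verify the face-poset isomorphism rigorously --- in particular, confirming that the incidence relations among the $\B_{\boldsymbol{\alpha}}$ read off from $H(\nS)$ coincide exactly with the incidences of the faces, edges and vertices of $O$ under the chosen labeling --- together with checking the circle condition $\partial\B_\alpha\simeq\mathbb{S}^1$ uniformly, so that the disk-extension step is legitimate for every face.
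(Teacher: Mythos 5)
Your proposal is correct and follows essentially the same route as the paper: both arguments reduce \eqref{eq:Uprime} to the fact that $\{\B_{\boldsymbol{\alpha}}\}$ and $\{O_{\boldsymbol{\alpha}}\}$ form isomorphic (regular) cell structures, as supplied by Proposition~\ref{lem:TZ:19} and Lemma~\ref{lem:OB}. The only difference is presentational: the paper compares the CW structures of $U(R)$ and $\bigcup_{\alpha\in R}O_\alpha$ directly and invokes the isomorphism, whereas you realize that isomorphism explicitly by a skeletal induction (with Alexander's trick) on all of $\partial T$ and then restrict --- a detail the paper leaves implicit.
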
 

\begin{proof}
Denote the right hand side of \eqref{eq:Uprime} by $U'(R)$. It is easy to see that $U'(R)$ is a CW complex\footnote{Again we use closed cells instead of open ones for convenience.} ({\it cf}.~\cite[p.~5]{Hatcher:02}). Indeed, for $i\in\{0,1,2\}$ the closed $i$-cells are given by the nonempty sets $O_{\boldsymbol{\alpha}}$ with $\boldsymbol{\alpha}\subseteq\nS$, $\boldsymbol{\alpha} \cap R \neq \emptyset$, and $\#\boldsymbol{\alpha} = 3-i$. Thus the $0$-skeleton $U'(R)^0$ is the set of vertices of $U'(R)$. Each closed $1$-cell $O_{\{\alpha_1,\alpha_2\}}$ is attached to the two closed $0$-cells $O_{\boldsymbol{\alpha}}$ satisfying $\boldsymbol{\alpha} \supset \{\alpha_1,\alpha_2\}$ and $\#\boldsymbol{\alpha} = 3$. This yields the $1$-skeleton $U'(R)^1$. To get $U'(R)$ we attach each closed $2$-cell $O_{\alpha_1}$, $\alpha_1\in R$, to the circle $\bigcup_{\alpha_2 \in \nS :\alpha_2\not=\alpha_1} O_{\{\alpha_1,\alpha_2\}}$. 

From Proposition~\ref{lem:TZ:19} we see that the set $U(R)$ is a CW complex whose closed $i$-cells are given by the nonempty sets $\B_{\boldsymbol{\alpha}}$ with $\boldsymbol{\alpha}\subseteq\nS$, $\boldsymbol{\alpha} \cap R \neq \emptyset$, and $\#\boldsymbol{\alpha} = 3-i$ for $i\in\{0,1,2\}$ with analogous attaching rules as above. 

Thus, by Lemma~\ref{lem:OB}, $U(R)$ and $U'(R)$ have isomorphic CW complex structures, hence, they are isomorphic as topological spaces.
\end{proof}

This lemma reduces the problem of determining the topology of $U(R)$ to a simple combinatorial problem. In Figure~\ref{fig:ur} we give two examples. The one on the left hand side shows that $U(R)$ is a $2$-ball if $R=\{P,N-Q,N-Q+P\}$, from the second one we immediately see that $U(R)$ is the  union of $2$ disjoint $2$-balls if $R=\{N,N-P,N-Q,N-Q+P,Q-N-P\}$.
\begin{figure}[h]
\includegraphics[trim=0 50 0 0, width=4.7cm]{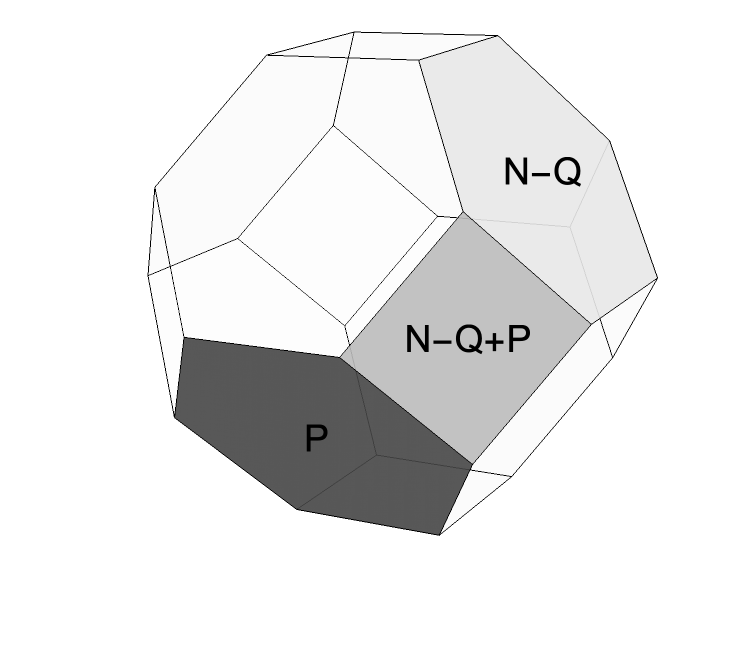}
\includegraphics[trim=0 60 0 0, width=5cm]{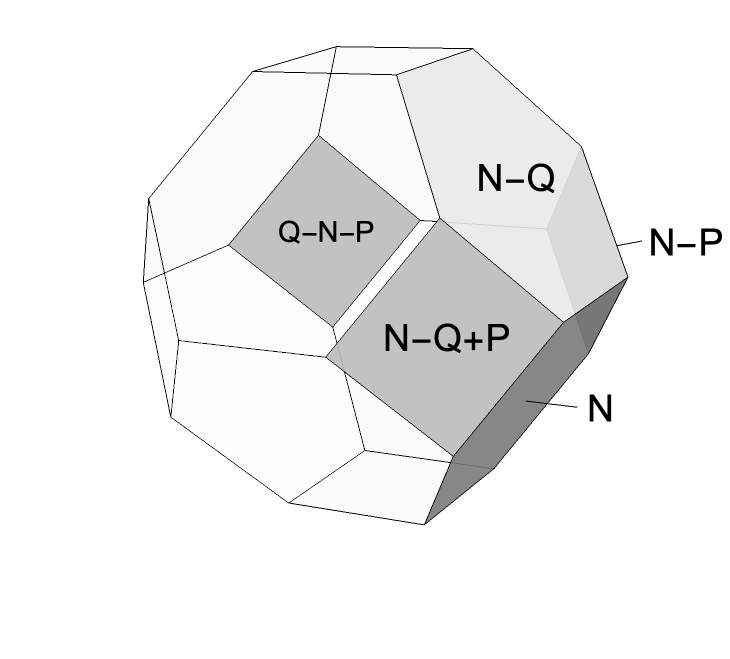}
\caption{The set $\bigcup_{\alpha\in R} O_\alpha$ for two choices of $R\subseteq \nS$. \label{fig:ur}}
\end{figure}

\section{Types of intersections}\label{sec:intersec}
Let $T=T(M,\D)$ be an $ABC$-tile with $14$ neighbors. In Section~\ref{sec:is} we study basic properties of intersections of the form $\sub_1\cap \sub_2$ where $\sub_1$ and $\sub_2$ are essentially disjoint subtiles of $T$ ($t_1$ may also be equal to $\overline{\R^3 \setminus T}$). We will show that we can attach to $\sub_1\cap \sub_2$ a set $R\subseteq \nS$ such that $\sub_1\cap \sub_2 \simeq U(R)$. According to Lemma~\ref{eq:UrO}, the topology of $U(R)$ is easy to determine. Knowing the topology of such intersections will be important in order to apply the results of Bing~\cite{Bing51} that will be needed in the proof of Theorem~\ref{thm:ball}. Section~\ref{sec:type} shows a way to choose the set $R \subseteq \nS$ for each intersection $\sub_1\cap \sub_2$ in a unique way (up to sign changes). This set is, by definition, the {\em type} of the intersection. In Section~\ref{sec:isgraph} we define a graph that will help us to survey the possible types of intersection ({\it i.e.}, the possible subsets $R$) that will occur in this context.

\subsection{Basic properties of intersections}\label{sec:is}
The definition of the type of an intersection requires some preparation. Let $T=T(M,\D)$ be an $ABC$-tile. 
Let 
\[
\sub_\infty = \overline{\R^3 \setminus T}=T+(\Z^3\setminus\{0\})
\]
be the closure of the complement of $T$. We define the collection (recall that $\D_i$ is defined in~\eqref{eq:Di})
\[
\nC= \{M^{-i}(T+d)\; : \;  i\in \N,\, d\in \D_i   \} \cup \{\sub_\infty\}
\]
that contains $\sub_\infty$ as well as each of the subtiles of $T$. If $\sub \in \nC$ we define
\begin{equation}\label{eq:leveldef}
\level{t} = \begin{cases}
i, & \hbox{if $\sub$ is of the form $M^{-i}(T+d)$ for $i\in \N$ and $d\in \D_i$},  \\
-\infty, & \hbox{if $\sub=\sub_\infty$}.
\end{cases}
\end{equation}
We provide the following simple result. Recall that $U(R)$ is defined in \eqref{eq:UR}.

\begin{lem}\label{lem:Rchar}
Let $T$ be an $ABC$-tile with $14$ neighbors.  Let $\sub_1,\sub_2 \in \nC$ be essentially disjoint. Then there is $R \subseteq \nS$ (possibly empty) such that 
$\sub_1 \cap \sub_2 = M^{-\ell}(U(R)+ d)$
for some $\ell\in\N$ and some $d\in \Z^3$.
\end{lem}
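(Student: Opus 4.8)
The plan is to reduce both $\sub_1$ and $\sub_2$ to a common ``scale'' by exploiting the self-affine structure, and then express their intersection as a translated copy of a union of neighbor pieces $\B_\alpha$. First I would dispose of the degenerate cases. If either $\sub_1$ or $\sub_2$ is contained in the other we would not have essential disjointness, so by \eqref{eq:inteq} and the remarks following Lemma~\ref{lem:LW1} we know $\sub_1\cap\sub_2=\partial\sub_1\cap\partial\sub_2$. The genuinely interesting situation is when both sets are honest subtiles of $T$; I will treat the case $\sub_2=\sub_\infty$ separately at the end, since there $\level{\sub_2}=-\infty$ and one reasons directly with the tiling $\{T+z:z\in\Z^3\}$ rather than with a finite iteration level.

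The core step handles two subtiles $\sub_1=M^{-\ell_1}(T+d_1)$ and $\sub_2=M^{-\ell_2}(T+d_2)$ with $\level{\sub_j}=\ell_j\in\N$ and $d_j\in\D_{\ell_j}$. Set $\ell=\max(\ell_1,\ell_2)$. Using the iterated set equation \eqref{eq:iset}, I would refine the subtile of smaller level into subtiles of level $\ell$: writing $T=\bigcup_{e\in\D_{\ell-\ell_j}}M^{-(\ell-\ell_j)}(T+e)$ and applying $M^{-\ell_j}(\,\cdot\,+d_j)$, each $\sub_j$ becomes a union of level-$\ell$ subtiles $M^{-\ell}(T+M^{\ell-\ell_j}d_j+e)$. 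Hence $\sub_1\cap\sub_2$ is a finite union of intersections of pairs of level-$\ell$ subtiles. Factoring out the common affine map $x\mapsto M^{-\ell}(x+d)$ (for a suitable base digit $d$, say the digit associated with $\sub_1$ at level $\ell$), each such pairwise intersection pulls back to an intersection of the central tile $T$ with a translate $T+\alpha$, where $\alpha\in\Z^3\setminus\{0\}$ is the difference of the two level-$\ell$ digit vectors. By \eqref{eq:bgamma} this is exactly $\B_\alpha$, which is nonempty precisely when $\alpha\in\nS$. Collecting the $\alpha$ that actually arise into a set $R\subseteq\nS$, and noting $U(R)=\bigcup_{\alpha\in R}\B_\alpha$, I obtain $\sub_1\cap\sub_2=M^{-\ell}(U(R)+d)$, as required. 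Essential disjointness of $\sub_1,\sub_2$ guarantees that $0\notin R$ (no level-$\ell$ subtile appears in both refinements with overlapping interior), so $R\subseteq\nS$ rather than $\nS\cup\{0\}$.

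For the boundary case $\sub_2=\sub_\infty=T+(\Z^3\setminus\{0\})$, I would instead refine only $\sub_1=M^{-\ell_1}(T+d_1)$ into level-$\ell_1$ pieces and intersect each with $\overline{\R^3\setminus T}$; pulling back by $x\mapsto M^{-\ell_1}(x+d_1)$ turns the problem into computing $T\cap(\R^3\setminus T)=\partial T=\bigcup_{\alpha\in\nS}\B_\alpha=U(\nS)$ on those pieces that touch the exterior, again yielding $R\subseteq\nS$ and the stated form with $\ell=\ell_1$. The main obstacle I anticipate is bookkeeping: one must verify that the refinement-and-pullback procedure is consistent regardless of which index realizes the maximum level, and that the digit vectors $M^{\ell-\ell_j}d_j+e$ genuinely lie in $\D_\ell$ so that the pieces are legitimate level-$\ell$ subtiles of $T$ (this uses the $M$-invariance $M\D_i\subseteq\D_{i+1}$ implicit in \eqref{eq:Di}). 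None of this requires the specific geometry of $14$-neighbor $ABC$-tiles beyond the fact, recorded in \eqref{eq:neigh} and \eqref{eq:bgamma}, that $\B_\alpha\neq\emptyset$ iff $\alpha\in\nS$; the $14$-neighbor hypothesis only becomes essential later, when Lemma~\ref{eq:UrO} is invoked to read off the topology of $U(R)$.
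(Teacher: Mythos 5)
Your proposal is correct and follows essentially the same route as the paper: reduce to the common (maximal) level via the iterated set equation, recognize each pairwise intersection of level-$\ell$ pieces as a translate of some $\B_\alpha$, and collect the nonempty ones into $R\subseteq\nS$; the paper simply treats $\sub_\infty$ uniformly by writing the lower-level set as a (possibly infinite) union of level-$\ell_2$ translates $M^{-\ell_2}(T+z_k)$ with $z_k\neq d$. The only imprecision is in your separate $\sub_\infty$ case, where the pullback by $x\mapsto M^{-\ell_1}(x+d_1)$ yields $T\cap\overline{\R^3\setminus M^{\ell_1}T}$ (a proper subset $U(R)$ of $\partial T$ when $\ell_1>0$) rather than $\partial T=U(\nS)$ itself, but this does not affect the conclusion.
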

 
 \begin{proof}
 Assume w.l.o.g.\ that $\level{\sub_1} \le \level{\sub_2}$. Set $\ell_i=\level{\sub_i}$. Then $\ell_2\in\N$ and $\sub_2=M^{-\ell_2}(T+d)$ for some $d\in \D_{\ell_2}$ and, by possibly subdividing $\sub_1$, we see that $\sub_1$ is a union of sets of the form $M^{-\ell_2}(T+z_k)$ with $z_k\in\Z^3\setminus\{d\}$ (this union is infinite if and only if $\sub_1=\sub_\infty$). Thus 
\[
\sub_1 \cap \sub_2 =  \bigcup_{k} M^{-\ell_2}(T+z_k) \cap M^{-\ell_2}(T+d)
= M^{-\ell_2}\bigcup_{k}(\B_{z_k-d} + d).
\]
Because $\B_\alpha \neq \emptyset$ holds if and only if $\alpha \in \nS$ there is a set $R\subseteq \nS$ such that
\begin{equation*}\label{eq:g1g2cap}
\sub_1 \cap \sub_2 = M^{-\ell_2}\bigcup_{\alpha \in R}(\B_{\alpha}+ d)=M^{-\ell_2}(U(R)+ d). \qedhere
\end{equation*}
 \end{proof}
 
By this lemma the topology of the intersection of two essentially disjoint elements of $\nC$ can be described in terms of a subset $R\subseteq\nS$. Using the notation \eqref{eq:Mnotation}, from \eqref{eq:arrow} we gain
 \begin{equation}\label{eq:GSsymm1}
\alpha\xrightarrow{d} \alpha'\in G(\nS) \quad \hbox{if and only if}\quad -\alpha
\xrightarrow{(C-1)_M-d} -\alpha'\in G(\nS). 
 \end{equation}
Thus \eqref{eq:bgammaseteq} yields $\B_{-\alpha}=x_C-\B_{\alpha}$ with
$x_C=\sum_{i\ge 1}M^{-i}(C-1)_M$
for each $\alpha\in\nS$ and, hence, $U(-R) = x_C-U(R)$. 
This implies that $U(-R) \simeq U(R)$, and we therefore want to identify $R$ with $-R$ in this description. To this matter we define the equivalence relation $\approx$ on the power set $2^\nS$ of $\nS$ by $R\approx R'$ if and only if $R'= \pm R$. The equivalence classes of this relation are denoted by $\overline{R}$ for $R\subseteq \nS$. Since this notation is only used for (finite) subsets $R$ of $\nS$, there is no risk of confusion with the closure $\overline{X}$ of a set $X$, for which the same notation is used.

%\begin{definition}[Type of intersection]\label{def:type}
%Let $T$ be an $ABC$-tile with $14$ neighbors. Let $R\subseteq \nS$ and $\sub_1,\sub_2\in \nC$ essentially disjoint.
%We say that the intersection $\sub_1 \cap \sub_2$ is of {\em type $\overline{R}$} if
%$\sub_1 \cap \sub_2 = M^{-\ell}(U(R)+ d)$
%for some $\ell\in\N$ and some $d\in \Z^3$.
%\end{definition}
%
%The type determines the topology of an intersection. It is not excluded that a given intersection is of type $\overline{R}$ and of type $\overline{R'}$ for $\overline{R}\not=\overline{R'}$. This will cause no problems.

%\subsection{Determining the type of an intersection by using the neighbor graph}\label{sec:nsec}
%One can use the neighbor graph $G(\nS)$ in order to get a description of the type of an intersection  $\sub_1 \cap \sub_2$ for essentially disjoint elements $\sub_1, \sub_2\in\nC$. 

\begin{remark}
Let $\sub_1,\sub_2 \in \nC$ be essentially disjoint. By Lemma~\ref{lem:Rchar},  $\sub_1 \cap \sub_2 = M^{-\ell}(U(R)+ d)$ for some $\ell\in\N$, $d\in \Z^3$, and $R\subset\nS$. We could define $\overline{R}$ as the {\em type} of the intersection of $\sub_1 \cap \sub_2$. However, {\em a priori} $\overline{R}$ is not uniquely defined by this equality and we would have to prove unicity. To circumvent this, in Section~\ref{sec:type} we give another (equivalent) definition of {\em type} that is obviously unique and better suited to our purposes. Roughly speaking, we pick the ``right'' class $\overline{R}$ by using the neighbor graph. The additional effort we need in order to state this definition will pay off later. 
\end{remark}

Before we can define the type of an intersection, we need one more lemma.

\begin{lem}\label{lem:GStypedescription}
Let $T$ be an $ABC$-tile. Let $\alpha \in \Z^3\setminus\{0\}$, $i \ge 0$, and $d=d_i+Md_{i-1}+\dots +M^{i-1}d_1\in \D_i$. Then
\begin{equation}\label{cell-ia}
\begin{split}
(T + \alpha) \cap M^{-i}(T+d)
&=M^{-i}
\bigcup_{\alpha_i:\; \alpha\xrightarrow{d_1}\alpha_1\xrightarrow{d_2}\cdots\xrightarrow{d_i}\alpha_i  \in G(\nS)} (\B_{\alpha_i}+d),
\end{split}
\end{equation}
where the union is extended over all $\alpha_i\in\nS$ for which there exist $\alpha_1,\ldots,\alpha_{i-1}\in\nS$ such that there is a walk $\alpha\xrightarrow{d_1}\alpha_1\xrightarrow{d_2}\cdots\xrightarrow{d_i}\alpha_i\in G(\nS)$. 
\end{lem}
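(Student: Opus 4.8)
The plan is to prove the formula \eqref{cell-ia} by induction on $i$, using the set equation \eqref{eq:setequation} to expand the subtile $M^{-i}(T+d)$ one level at a time and tracking the surviving intersections through the neighbor graph $G(\nS)$. For the base case $i=0$ we have $d\in\D_0$, so $d=0$ and $M^{-i}(T+d)=T$; the right hand side reduces to a union over empty walks, which is just $\B_\alpha$ (with the convention that the only node reachable by a walk of length $0$ from $\alpha$ is $\alpha$ itself). Since $(T+\alpha)\cap T=\B_\alpha$ by \eqref{eq:bgamma}, the formula holds. For $i=1$ the statement is essentially the definition of the edges of $G(\nS)$ together with the set equation: writing $d=d_1\in\D$, the intersection $(T+\alpha)\cap M^{-1}(T+d_1)$ expands via \eqref{eq:setequation} into pieces indexed by the edges $\alpha\xrightarrow{d_1}\alpha_1$, giving $M^{-1}\bigcup_{\alpha_1}(\B_{\alpha_1}+d_1)$.

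For the inductive step, I would assume \eqref{cell-ia} for $i-1$ and prove it for $i$. First I would factor the subtile as
\[
M^{-i}(T+d)=M^{-1}\bigl(M^{-(i-1)}(T+d')+d_i\bigr),
\]
where $d'=d_{i-1}+Md_{i-2}+\dots+M^{i-2}d_1\in\D_{i-1}$, so that $d=d_i+Md'$. The key algebraic observation is that
\[
(T+\alpha)\cap M^{-i}(T+d)
= M^{-1}\Bigl(\bigcup_{d''\in\D}\bigl((T+d'')+d_i\bigr)\cap\bigl(M^{-(i-1)}(T+d')+d_i\bigr)\Bigr)\cap(T+\alpha),
\]
but the cleaner route is to apply $M$ to both sides: $M\bigl((T+\alpha)\cap M^{-i}(T+d)\bigr)=(MT+M\alpha)\cap M^{-(i-1)}(T+d)$, then expand $MT=T+\D$ via the set equation. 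This reduces the left hand side to a union over $d_i\in\D$ and over $\alpha_1\in\nS$ with $\alpha\xrightarrow{d_1}\alpha_1\in G(\nS)$ (matching the digit $d_1$ that sits in the innermost position of $d$), of intersections of the form $(T+\alpha_1)\cap M^{-(i-1)}(T+d')$. Applying the induction hypothesis to each such intersection, indexed by the walks $\alpha_1\xrightarrow{d_2}\cdots\xrightarrow{d_i}\alpha_i$, and concatenating the initial edge $\alpha\xrightarrow{d_1}\alpha_1$ gives exactly the walks $\alpha\xrightarrow{d_1}\alpha_1\xrightarrow{d_2}\cdots\xrightarrow{d_i}\alpha_i$ appearing in \eqref{cell-ia}.

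The main obstacle I anticipate is bookkeeping: making sure the digit extraction is consistent with the edge relation \eqref{eq:arrow}, namely that $M\alpha+d'-d=\alpha'$ corresponds correctly to the way $d=d_i+Md_{i-1}+\dots+M^{i-1}d_1$ is read off digit by digit, and that the innermost application of the set equation peels off the digit $d_1$ (the coefficient of $M^{i-1}$) rather than $d_i$. It is crucial to track which digit is consumed at each level so that the composed walk reads $\alpha\xrightarrow{d_1}\cdots\xrightarrow{d_i}\alpha_i$ in the stated order. A secondary point is to verify that no spurious nonempty intersections are introduced or lost when expanding $MT=T+\D$: only those translates $T+d''$ whose intersection with $T+\alpha_1$ is nonempty contribute, and by the definition of $\nS$ and the edge condition \eqref{eq:arrow} these are precisely the ones encoded by outgoing edges in $G(\nS)$. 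Once the indexing is aligned, the identity follows by a direct matching of terms, and the empty-walk convention for $i=0$ ensures the base case fits seamlessly into the recursion.
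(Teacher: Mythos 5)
Your overall strategy -- induction on $i$, driven by the set equation and the edge relation \eqref{eq:arrow} -- is the same as the paper's, and your base cases $i=0,1$ match its computation \eqref{cell-1a}. The one real difference is the direction in which you unroll the walk. The paper peels off the \emph{last} digit $d_i$: it uses the nesting $M^{-i}(T+d)\subset M^{-(i-1)}(T+d')$ with $d'=d_{i-1}+Md_{i-2}+\dots+M^{i-2}d_1$, applies the induction hypothesis to $(T+\alpha)\cap M^{-(i-1)}(T+d')$ to produce the walks $\alpha\xrightarrow{d_1}\cdots\xrightarrow{d_{i-1}}\alpha_{i-1}$, and then appends the final edge by invoking the $i=1$ case on $(T+\alpha_{i-1})\cap M^{-1}(T+d_i)$. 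Your ``cleaner route'' instead peels off the \emph{first} edge: multiply by $M$, expand $MT=T+\D$, use that $M^{-(i-1)}(T+d)\subseteq T+d_1$ to select exactly the successors $\alpha_1$ with $\alpha\xrightarrow{d_1}\alpha_1\in G(\nS)$, and apply the induction hypothesis to $(T+\alpha_1)\cap M^{-(i-1)}(T+\tilde d)$ with $\tilde d=d_i+Md_{i-1}+\dots+M^{i-2}d_2$. Both are valid; the paper's version has slightly lighter bookkeeping because the digit it strips ($d_i$, the coefficient of $M^0$) is the one sitting in the outermost position of the nested subtiles, whereas yours must explicitly identify which translate $T+d''$ of the expansion of $MT$ actually contains $M^{-(i-1)}(T+d)$.

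One concrete slip you should repair: the displayed factorization
\[
M^{-i}(T+d)=M^{-1}\bigl(M^{-(i-1)}(T+d')+d_i\bigr)
\]
is false for your $d'=d_{i-1}+Md_{i-2}+\dots+M^{i-2}d_1$; the right-hand side equals $M^{-i}(T+d'+M^{i-1}d_i)$, not $M^{-i}(T+d_i+Md')$. The decomposition consistent with $d=d_i+Md'$ is $M^{-i}(T+d)=M^{-(i-1)}\bigl(M^{-1}(T+d_i)+d'\bigr)$, which is the paper's route. For your own route the relevant splitting is $d=M^{-(i-1)}$-innermost, i.e.\ $d=\tilde d+M^{i-1}d_1$, and the intersections produced by the induction hypothesis are $(T+\alpha_1)\cap M^{-(i-1)}(T+\tilde d)$ -- you reuse the symbol $d'$ for this set, which clashes with your earlier definition. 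You correctly flag in prose that the innermost peeling must consume $d_1$, so this is a notational inconsistency rather than a gap in the idea, but as written the two halves of your induction step refer to incompatible decompositions of $d$ and would not survive a literal transcription.
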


Note that the union in \eqref{cell-ia} may well be empty. This is certainly the case if $\alpha\not\in\nS$.

\begin{proof}
For $i=0$ we have $d=0$ and \eqref{cell-ia}  is trivial. For $i\ge1$ we prove \eqref{cell-ia} by induction on $i$. For the induction start let $i=1$ and observe that for each fixed $d\in \D$ we get, by the set equation \eqref{eq:setequation} and the definition of the edges in $G(\nS)$ provided in \eqref{eq:arrow},
\begin{equation}\label{cell-1a}
\begin{split}
(T + \alpha) \cap M^{-1}(T+d)&=  M^{-1}\big((MT + M\alpha) \cap(T+d)\big)
\\
&= M^{-1} 
\bigcup_{d'\in \D} \big((T + d' + M\alpha) \cap (T+d)
\big) \\
&= M^{-1}
\bigcup_{d'\in \D} \big(\big((T + M\alpha+ d' - d) \cap T\big) + d \big)
\\
&=M^{-1}\bigcup_{\alpha':\; \alpha\xrightarrow{d}\alpha' \in G(\nS)} (\B_{\alpha'}+d).
\end{split}
\end{equation}
For the induction step assume that \eqref{cell-ia} holds for $i-1$ instead of $i$,
let $d'=d_{i-1}+Md_{i-2}+\dots +M^{i-2}d_1\in \D_{i-1}$ and $d=d_i+Md'$. The set equation \eqref{eq:setequation} implies that $M^{-i}(T+d) \subset M^{-i+1}(T+d')$. Thus by the induction hypothesis
\begin{equation*}
\begin{split}
(T + \alpha) \,\cap\,& M^{-i}(T+d)
= (T + \alpha) \cap M^{-i+1}(T+d')  \cap M^{-i}(T+d)  \\
&=M^{-i+1}
\bigcup_{\alpha_{i-1}:\; \alpha\xrightarrow{d_1}\alpha_1\xrightarrow{d_2}\cdots\xrightarrow{d_{i-1}}\alpha_{i-1}  \in G(\nS)} \big((\B_{\alpha_{i-1}}+d')\cap M^{-1}(T+d)\big)
 \\
&=M^{-i+1}
\bigcup_{\alpha_{i-1}:\; \alpha\xrightarrow{d_1}\alpha_1\xrightarrow{d_2}\cdots\xrightarrow{d_{i-1}}\alpha_{i-1}  \in G(\nS)} \big(\big((T+\alpha_{i-1}) \cap M^{-1}(T+d_i)\big)+d'\big).
\end{split}
\end{equation*}
Applying \eqref{cell-1a} to the last intersection yields \eqref{cell-ia} and the induction is finished.
\end{proof}

\subsection{The type of an intersection}\label{sec:type}
We are now ready to define the type of an intersection. Let $T$ be an $ABC$-tile with $14$ neighbors, let $\sub\in \nC\setminus\{\sub_\infty\}$, and set $i=\level{\sub}$. Then there is $d=d_i+Md_{i-1}+\dots +M^{i-1}d_1\in \D_i$ such that $\sub=M^{-i}(T+d) \subseteq T$. Thus Lemma~\ref{lem:GStypedescription} implies that
\begin{equation}\label{cell-i}
\sub_\infty \cap \sub 
= \bigcup_{\alpha\in \nS} ((T+\alpha) \cap M^{-i}(T+d))
=M^{-i}
\bigcup_{\alpha\in \nS}\bigcup_{\alpha_i\;:\alpha\xrightarrow{d_1}\alpha_1\xrightarrow{d_2}\cdots\xrightarrow{d_i}\alpha_i\in G(\nS)} (\B_{\alpha_i}+d).
\end{equation}
We say that the intersection $\sub_\infty \cap \sub$ is of {\em type} $\overline{R(\sub_\infty,\sub)}$ with 
\begin{equation}\label{eq:Rgginf}
R(\sub_\infty,\sub)=\{\alpha_i \;:\, \hbox{there is $\alpha\in \nS$ with } \alpha\xrightarrow{d_1}\alpha_1\xrightarrow{d_2}\cdots\xrightarrow{d_i}\alpha_i\in G(\nS) \}.
\end{equation}
Note that \eqref{cell-i} implies that $\sub_\infty \cap \sub \simeq U(R(\sub_\infty,\sub)) \simeq U(-R(\sub_\infty,\sub))$. Thus the type $\overline{R(\sub_\infty,\sub)}$ determines the topology of the intersection $\sub_\infty \cap \sub$.

Let  $\sub_1,\sub_2\in \nC\setminus\{\sub_\infty\}$ be essentially disjoint and ordered such that  $i=\level{\sub_1}\le \level{\sub_2}=j$.
We can uniquely choose $z\in \Z^3$, $\alpha\in\Z^3\setminus\{0\}$, $d= d_{j-i}+Md_{j-i-1}+\dots +M^{j-i-1}d_1 \in \D_{j-i}$ in a way that
\[
M^i(\sub_1 \cap \sub_2) + z = (T + \alpha) \cap M^{i-j}(T+d).
\] 
Thus Lemma~\ref{lem:GStypedescription} implies that
\begin{equation}\label{eq:gg12}
\sub_1 \cap \sub_2 = M^{-j} \Big(
\bigcup_{\alpha_{j-i}:\, \alpha\xrightarrow{d_{1}}\alpha_{1}\xrightarrow{d_{2}}\cdots\xrightarrow{d_{j-i}}\alpha_{j-i}\in G(\nS)} (\B_{\alpha_{j-i}}+d) \Big)
 - M^{-i}z.
\end{equation}
We say that the intersection $\sub_1 \cap \sub_2$ is of {\em type}\footnote{If $i=j$ we could switch the roles of $\sub_1$ and $\sub_2$. But since it is easy to see that in this case $R(\sub_1,\sub_2)=-R(\sub_2,\sub_1)$, the type $\overline{R(\sub_1,\sub_2)}$ is well defined also in this case.}
 $\overline{R(\sub_1,\sub_2)}$ with 
\begin{equation}\label{eq:gg13}
R(\sub_1,\sub_2)=\{\alpha_{j-i} \;:\; \alpha\xrightarrow{d_{1}}\alpha_{1}\xrightarrow{d_{2}}\cdots\xrightarrow{d_{j-i}}\alpha_{j-i} \in G(\nS)\}.
\end{equation}
Note that \eqref{eq:gg12} implies that $\sub_1 \cap \sub_2 \simeq U(R(\sub_1,\sub_2)) \simeq U(-R(\sub_1,\sub_2))$. Thus the type $\overline{R(\sub_1,\sub_2)}$ determines the topology of the intersection $\sub_1 \cap \sub_2$. Summing up we have the following lemma.

\begin{lem}\label{lem:typetopo}
Let $\sub_1,\sub_2\in\nC$ be essentially disjoint. If $\sub_1\cap \sub_2$ is of type $\overline{R}$ for some $R\subseteq\nS$ then $\sub_1\cap \sub_2 \simeq U(R)$. 
\end{lem}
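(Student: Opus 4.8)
The plan is to recognize that Lemma~\ref{lem:typetopo} is essentially a bookkeeping statement that packages the two homeomorphism observations already recorded immediately after equations \eqref{cell-i} and \eqref{eq:gg12}. Since $\sub_1$ and $\sub_2$ are essentially disjoint, they cannot be equal (a subtile and $\sub_\infty$ both have positive measure, so equality would force $\mu(\sub_1\cap\sub_2)>0$) and they cannot both equal $\sub_\infty$. Hence exactly one of two situations occurs: either one of them is $\sub_\infty$, or both lie in $\nC\setminus\{\sub_\infty\}$. I would treat these two cases separately, and in each case exhibit $\sub_1\cap\sub_2$ as an affine image of $U(R')$ for the concrete set $R'$ produced by the definition of type in Section~\ref{sec:type}.

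In the first case, ordering so that the finite-level tile is $\sub$, equation \eqref{cell-i} rewrites as $\sub_\infty\cap\sub = M^{-i}(U(R(\sub_\infty,\sub))+d)$, where $R(\sub_\infty,\sub)$ is the set \eqref{eq:Rgginf} collecting all terminal nodes of the relevant walks in $G(\nS)$. The map $x\mapsto M^{-i}(x+d)$ is an affine bijection of $\R^3$ with invertible linear part $M^{-i}$, hence a homeomorphism; therefore $\sub_\infty\cap\sub\simeq U(R(\sub_\infty,\sub))$. In the second case, equation \eqref{eq:gg12} gives $\sub_1\cap\sub_2 = M^{-j}(U(R(\sub_1,\sub_2))+d) - M^{-i}z$, and the same reasoning—the composite of the linear isomorphism $M^{-j}$ with two translations is again an affine homeomorphism—yields $\sub_1\cap\sub_2\simeq U(R(\sub_1,\sub_2))$. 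Thus in both cases the intersection is homeomorphic to $U(R')$ for the specific representative $R'$ of its type.

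Finally I would reconcile this with the phrasing of the lemma, which only assumes that $\sub_1\cap\sub_2$ is of type $\overline{R}$ for some $R\subseteq\nS$. By the definition of type via the equivalence relation $\approx$, having $\overline{R}=\overline{R'}$ means $R=\pm R'$. Using the identity $U(-R')=x_C-U(R')$ derived from \eqref{eq:bgammaseteq} together with the symmetry \eqref{eq:GSsymm1}, the reflection $x\mapsto x_C-x$ provides a homeomorphism $U(R')\simeq U(-R')$. Combining this with the case analysis gives $\sub_1\cap\sub_2\simeq U(R')\simeq U(R)$ regardless of the sign, which completes the argument.

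I do not anticipate a genuine obstacle here, as the substance is already contained in \eqref{cell-i} and \eqref{eq:gg12}; the only point requiring care is the sign bookkeeping, namely that the equivalence class $\overline{R}$ absorbs the ambiguity between $R'$ and $-R'$ (this is precisely the phenomenon that the footnote to the type definition records when $i=j$, where $R(\sub_1,\sub_2)=-R(\sub_2,\sub_1)$). Making the relevant homeomorphisms explicit as affine maps, rather than merely invoking the symbol $\simeq$, keeps the proof clean and self-contained.
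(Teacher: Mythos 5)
Your proposal is correct and follows essentially the same route as the paper, which states this lemma as a summary of the observations made directly after \eqref{cell-i} and \eqref{eq:gg12} (the intersection is an affine image of $U(R')$ for the defining representative $R'$, and the sign ambiguity in $\overline{R}$ is absorbed by $U(-R')=x_C-U(R')$). Your version merely makes the affine homeomorphisms and the case split explicit, which is consistent with the paper's intent.
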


Let $\sub_1, \sub_2\in \nC$ be essentially disjoint.  If $\sub_1\cap \sub_2$ has a certain type, we want to know how this influences the type of $\sub_1\cap \sub_2'$ for $\sub_2' \in  \nC$ with $\sub_2' \subset \sub_2$. This will be studied in the next section.

\subsection{A graph that governs the types of intersections}\label{sec:isgraph}
Let $T$ be an $ABC$-tile with $14$ neighbors. We want to know which classes $\overline{R}$ are needed to describe all possible intersections of essentially disjoint elements of $\nC$. 
To this end we introduce the following notation. For a subset $R\subseteq \nS$ and a digit $d\in \D$, we define
\begin{equation}\label{eq:ndrDef}
n_d(R):=\{\alpha'\;:\; \alpha\xrightarrow{d} \alpha'\in G(\nS) \text{ for } \alpha  \in R\}.
\end{equation}
Then $n_d(R)$ contains the successors of elements of $R$ in the neighbor graph that can be reached by an edge with label $d$. Of course, $n_d(R)$ is a subset of $\nS$.  By the symmetry property \eqref{eq:GSsymm1} we have 
\begin{equation}\label{eq:GSsymm}
n_d(R) = -n_{(C-1)_M-d}(-R) \qquad(R\subseteq \nS,\, d\in \mathcal{D}).
\end{equation}
Let $N_0=\{\overline{\nS}\}$ be the set containing the residue class of the full set of neighbors and recursively define a nested sequence $(N_k)_{k\geq 0}$ of subsets of the power set $2^\nS$  by
\begin{equation}\label{eq:Nrec}
N_k=\{ \overline{n_d(R)}\;:\;\overline{R}\in N_{k-1}, d\in \D\}\cup N_{k-1} \qquad (k\geq 1).
\end{equation}
By \eqref{eq:GSsymm}, $N_k$ is well-defined because nothing changes if we replace $R$ by $-R$ in the argument of $n_d$ on the right hand side of \eqref{eq:Nrec}. Because $2^\nS$ is finite there exists a minimal $k_0\in \N$ such that $N_{k_0+1}=N_{k_0}$ and, hence, $N_k=N_{k_0}$ for each $k\geq k_0$. This leads to the following definition.

\begin{definition}[Intersection graph]\label{def:nI}
Let $T$ be an $ABC$-tile with $14$ neighbors. 
The \emph{intersection graph} $\nI$ is the graph whose nodes are the elements of\footnote{We leave away the empty set for practical reasons. It would cause many additional edges in the intersection graph.} $N_{k_0}\setminus\{\overline{\emptyset}\}$. And whose edges are defined by
\begin{equation}\label{eq:Nedge}
\overline{R} \rightarrow \overline{R'} \in \nI \quad\hbox{ if and only if }\quad R'= \pm n_d(R) \hbox{ for some } d\in\D
\end{equation}
(which is again well-defined because of \eqref{eq:GSsymm}).
\end{definition}
We note that the {\em in-out graph} defined in~\cite[Section~7]{ConnerThuswaldner0000} is used for a similar purpose as our intersection graph $\nI$. However, $\nI$ has a simpler structure than the in-out graph.

\begin{lem}
Let $T$ be an $ABC$-tile with $14$ neighbors and assume that  $A=1$. Then we have the following two cases for $\nI$.
\begin{itemize}
\item[(1)] For $A=1$, $B=2$, and $C\geq 4$ the graph $\nI$ is given by Figure~\ref{Fig:B=2}. In particular, we have  $\# \nI=55$. 
\item[(2)] For $A=1$, $B\geq 3$, and  $C\geq 2B$ the graph $\nI$ is given by Figure~\ref{Fig:B>2}. In particular, we have $ \# \nI=57$. 
\end{itemize}
By Remark~\ref{rem:2} the constellations $A,B,C$ covered in (1) and (2) exhaust all $ABC$-tiles with $14$ neighbors having $A=1$.
\end{lem}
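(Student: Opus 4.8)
The plan is to carry out the fixed-point computation defining $\nI$ explicitly, using the neighbor graph $G(\nS)$ of Figure~\ref{fig:Gamma2_1} as the sole input. Recall that $\nI$ is the closure of the singleton $N_0=\{\overline{\nS}\}$ under the operators $R\mapsto\overline{n_d(R)}$, $d\in\D$, so the lemma is in principle a finite (if sizeable) breadth-first search on the power set $2^\nS$. The first step is to read off, for each neighbor $\alpha\in\nS$ and each digit $d\in\D$, the successor (if any) of $\alpha$ under an edge labeled $d$; this is exactly the data encoded in Figure~\ref{fig:Gamma2_1}, specialized to $A=1$ so that $P=(1,0,0)^t$, $Q=(1,1,0)^t$, and $N=(B,1,1)^t$. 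I would then run the recursion~\eqref{eq:Nrec}: at each stage apply every $n_d$ to every class found so far, reduce modulo the sign relation using \eqref{eq:GSsymm1} and \eqref{eq:GSsymm}, and record the new classes together with the edges~\eqref{eq:Nedge} they create, iterating until $N_{k_0+1}=N_{k_0}$.

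The central observation that makes the search manageable, and uniform in $C$, is that for fixed $R\subseteq\nS$ the set $n_d(R)$ is a \emph{piecewise constant} function of $d$ as $d$ ranges over $\{0,1,\dots,C-1\}$. Its breakpoints are precisely the endpoints of the label ranges appearing in Figure~\ref{fig:Gamma2_1}, which for $A=1$ occur only at boundedly many values: the extreme digits $0$, $C-A-1$, $C-A$ (these are the ranges visible for the node $P$, as in the computation of $Z_P$), together with the values near $B$ and near $C-B$. Hence only boundedly many distinct sets $n_d(R)$ arise, independently of $C$. Provided $C$ is large enough ($C\ge 4$, resp.\ $C\ge 2B$) that these breakpoints do not collide and the interior ranges are nonempty, the list of classes $\overline{n_d(R)}$ produced at each stage depends on $C$ only through these few boundary values, and the entire graph $\nI$ is therefore independent of $C$ within each regime.

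The split into the two cases $B=2$ and $B\ge3$ enters only through the relative position of the breakpoints listed above: when $B=2$ some of the endpoints coincide and some intermediate ranges degenerate, so that a small number of classes that remain distinct for $B\ge3$ collapse into one. Tracking this collapse is what lowers the node count from $57$ to $55$. Concretely, I would run the enumeration once for generic $B\ge3$ (with $C\ge2B$), obtaining the $57$ classes and edges of Figure~\ref{Fig:B>2}, and then re-examine the finitely many steps at which a $B$-dependent breakpoint is used, checking that setting $B=2$ identifies exactly two pairs of classes and yields Figure~\ref{Fig:B=2} with $\#\nI=55$. Finally, the claim that these two regimes exhaust all $A=1$ tiles with $14$ neighbors follows from the characterization in Remark~\ref{rem:2}: with $A=1$ the inequality $B<2A-1$ is vacuous, so only the first bullet can apply, forcing $A<B$ (hence $B\ge2$) and $C\ge2(B-A)+2=2B$, which is covered precisely by $B=2,\,C\ge4$ together with $B\ge3,\,C\ge2B$.

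The main obstacle is bookkeeping rather than conceptual: the closure has more than fifty nodes, each carrying several outgoing edges, so the difficulty lies in performing the enumeration without error. The step requiring the most care is establishing the $C$-independence cleanly, so that a single finite computation settles an infinite family of tiles; one must verify rigorously that the breakpoint structure of $d\mapsto n_d(R)$, and hence the whole graph, genuinely stabilizes once $C$ passes the stated threshold in each regime.
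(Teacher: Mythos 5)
Your proposal is correct and follows essentially the same route as the paper: an explicit breadth-first closure of $\{\overline{\nS}\}$ under the operators $n_d$, read off from the neighbor graph of Figure~\ref{fig:Gamma2_1}, with digits grouped into the finitely many label ranges so that the computation is uniform in $C$, and with the case split $B=2$ versus $B\ge 3$ (and the threshold $C\ge 2B$) tracked through the finitely many $B$-dependent breakpoints; the derivation of exhaustiveness from Remark~\ref{rem:2} also matches. The paper simply records the outcome of this iteration in the tables for $N_1',\dots,N_7'$ and checks stabilization at $N_8=N_7$.
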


\begin{remark}\label{rem:graphremark}
The graphs $\nI$ are rather large. Thus we cannot draw them directly. Figure~\ref{Fig:B=2} contains a tree. The quotient graph we obtain by identifying nodes with the same node-label in this tree equals $\nI$ for $A=1,\; B=2,\; C \ge 4$. Similarly if we quotient the tree in Figure~\ref{Fig:B>2} by identifying nodes with the same node-label we obtain $\nI$ for $A=1,\; B\geq 3,\; C\ge 2B$.
\end{remark}

\begin{figure}
\centering
\includegraphics[angle=90,trim={0 0 0 -.5cm},clip=true,height=\textheight]{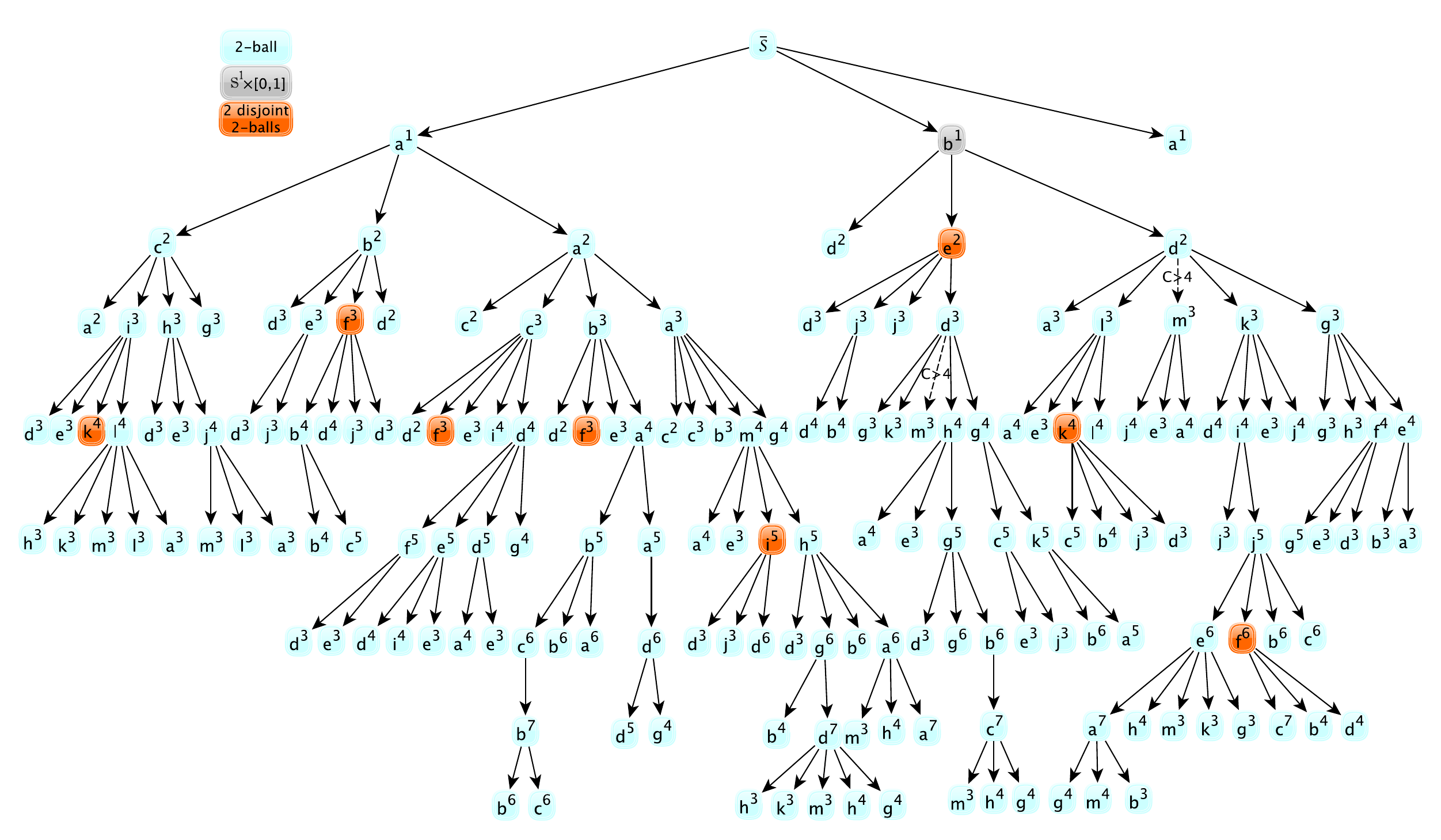}
\caption{The graph $\nI$ for $A=1,B=2, C\geq 4$ is obtained as a quotient graph of this tree; see Remark~\ref{rem:graphremark}.}\label{Fig:B=2}
\end{figure}

\begin{figure} 
\centering
\includegraphics[angle=90,trim={0 0 0 -.5cm},clip=true,height=\textheight]{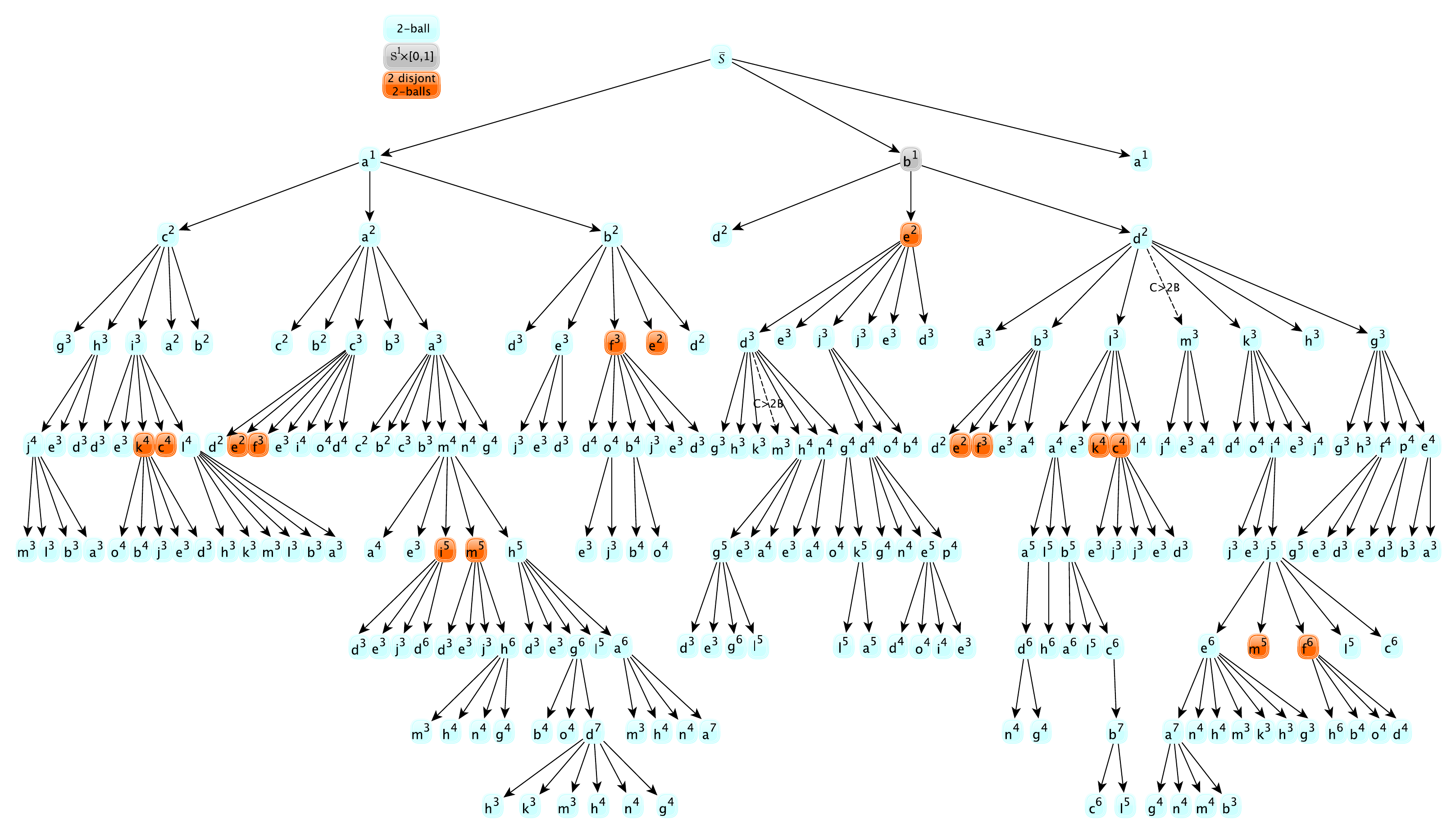}
\caption{The graph $\nI$ for $A=1,B\geq 3, C\geq 2B$ is obtained as a quotient graph of this tree; see Remark~\ref{rem:graphremark}.}\label{Fig:B>2}
\end{figure}

\begin{proof}
This proof is just a lengthy but easy calculation.
%\footnote{We double checked our calculations by a Mathematica program which calculates $\nI$ for a given constellation $A,B,C$. Is it probably possible to write a general Mathematica program to automatize this proof. But since the proof just consists of tables that we need anyway such an automatization would not shorten the exposition.}
Since $N_1=N_1' \cup N_0$ where $N_1'=\{\overline{n_d(\nS)}\,:\,d\in \D\}$, we have to determine $n_d(\nS)$ for each $d\in\D$. Recall the notation \eqref{eq:Mnotation}. From the neighbor graph we see that, for $d=0$, there exists an edge of the form $\alpha \xrightarrow{0|d'}\alpha'$ for each $\alpha' \in \nS\setminus\{P\}$. Thus $\overline{\nS\setminus\{P\}} \in N_1'$. For $d=(e)_M$ with $1\le e \le C-2$, an edge of the form $\alpha \xrightarrow{(e)_M|d'}\alpha'$ exists for  each $\alpha' \in \nS\setminus\{{P,-P}\}$, hence, $\overline{\nS\setminus\{P,-P\}} \in N_1'$. Finally, for $d=(C-1)_M$ an edge of the form $\alpha \xrightarrow{(C-1)_M|d'}\alpha'$ exists for  each $\alpha' \in \nS\setminus\{-P\}$. Thus also $\overline{\nS\setminus\{-P\}}$ is an element of $N_1'$. 
However, since $\overline{\nS\setminus\{-P\}} = \overline{\nS\setminus\{P\}}$ we already got this element before. The sets $\overline{R}$ with $R\subseteq \nS$ contained in $N_1'$ are listed in the second column of Table~\ref{tab_N1}. Table~\ref{tab_N1}, as well as all the other tables\footnote{We provide all these tables in order to illustrate the proof and because we need them for later reference.} in this proof, has the following columns: The first column contains the name of the node $\overline{R}$ in the graphs in Figures~\ref{Fig:B=2} and~\ref{Fig:B>2} corresponding to the subset $R\subseteq \nS$ in the second column. The third column indicates the condition under which this subset occurs. Finally, the fourth column describes the topology of $U(R)$. Recall that according to Lemma~\ref{eq:UrO} the topology of $U(R)$ can be obtained by easy combinatorial arguments which can (as we did) be easily checked by a computer program. Summing up we have shown that 
$
N_1=N_1' \cup N_0 = \{\overline{\nS}, \overline{\nS\setminus\{{P}\}}, \overline{\nS\setminus\{{P,-P}\}} \}
$.
{
\small
\renewcommand{\arraystretch}{1.6} 
\smallskip
\begin{longtable}{|c|c|c|c|}
\hline 
Node&Subset $R$&Condition&Topology of $U(R)$\\
\hline
\circled{$a^1$}&$ \nS\setminus\{{P}\}$&---&$2$-ball\\
\hline 
\circled{$b^1$}&$\nS\setminus\{P,-P\}$&---& $\mathbb{S}^1\times[0,1]$ (a ``ribbon'')\\
\hline 
\caption{The set $N_1'$. \label{tab_N1}}
\end{longtable}
}
Now we can calculate $N_k$ for $k\ge 1$ in an analogous way as follows.
{
\small
\renewcommand{\arraystretch}{1.6} 
\smallskip
\begin{longtable}{|c|c|c|c|}
\hline 
Node&Subset $R$&Condition&Topology of $U(R)$
\endhead
\hline 
\circled{$a^2$}&$\nS\setminus \{P, Q, Q-P\}$ &---&  $2$-ball  \\
\hline 
 \circled{$b^2$}&$\nS\setminus\{P, Q, Q-P,-P\}$&---&$2$-ball  \\
\hline
 \circled{$c^2$}&$\nS\setminus\{P, P-Q\}$&---& $2$-ball \\
\hline
 \circled{$d^2$}&$\nS\setminus\{P,Q,Q-P,P-Q\}$&---& $2$-ball \\
\hline
 \circled{$e^2$}&$\nS\setminus\{P,Q,Q-P,-P,-Q,P-Q\}$&---&
 $2$ disjoint $2$-balls  \\
\hline
\caption{The set $N_2'$ consists of the subsets in the second column in this table. \label{tab:N2}}
\end{longtable}
}
\noindent
Starting from $N_1$ we use \eqref{eq:Nrec} and the neighbor graph $G(\nS)$ to calculate $N_2$. This yields that $N_2=N_2'\cup N_1$ where the set $N_2'$ corresponds to the subsets indicated in Table~\ref{tab:N2}.
{
\small
\renewcommand{\arraystretch}{1.6} 
\smallskip
\begin{longtable}{|c|c|c|c|}
\hline 
Node&Subset $R$&Condition&Topology of $U(R)$
\endhead
\hline
\circled{$a^3$}&$\nS_1$&---&$2$-ball\\
\hline 
\circled{$b^3$}&$\nS_1\setminus \{P\}$ &---&$2$-ball\\
\hline 
\circled{$c^3$}&$(\nS_1\setminus\{P\})\cup\{-N,Q-N-P\}$
&---&$2$-ball\\
 \hline 
\circled{$d^3$}&$\nS_1\setminus\{Q-P\}$ &---& $2$-ball\\
 \hline 
 \circled{$e^3$}&$\nS_1\setminus\{P,Q,Q-P\}$&---&$2$-ball\\
  \hline 
\circled{$f^3$}&$(\nS_1\setminus\{P,Q,Q-P\})\cup\{-N,Q-N-P\}$
 &---&
 $2$ disjoint $2$-balls\\ 
 \hline 
 \circled{$g^3$}&$\{Q,N,Q-P,N-P\}\cup(-\nS_1\setminus\{P-Q\})$&---&$2$-ball\\
 \hline 
 \circled{$h^3$}&$\{Q,N,Q-P,N-P\}\cup(-\nS_1\setminus\{-P,-Q,P-Q\})
$&---&$2$-ball \\ 
\hline 
\circled{$i^3$}&$(\nS_1\setminus\{P,N-Q\})\cup
(-\nS_1\setminus\{-P,-Q,P-Q\})$
&---&$2$-ball\\
\hline 
\circled{$j^3$}&$\{N,N-Q+P\}$ &---&$2$-ball\\
\hline 
\circled{$k^3$}&$\{Q, N, Q-P, N-P, -N, Q-N-P\}$&---&$2$-ball\\
\hline 
\circled{$l^3$}&$\nS_1\setminus\{P,N-Q\}$&---& $2$-ball\\
\hline 
\circled{$m^3$} & $\{Q, N, Q-P, N-P\}$&$C>2B$&$2$-ball\\
\hline 
\caption{The set $N_3'$. \label{tab:N3}}
\end{longtable}
}
\noindent
We now go on in the same way. If $N_3'$ consists of the sets in the second column of Table~\ref{tab:N3} then, using \eqref{eq:Nrec}, a somewhat lengthy but easy calculation shows that $N_3=N_3'\cup N_2$. Here we have to be careful about the node \circled{$m^3$}. This node only occurs in $N_3'$ if $C>2B$. If $C = 2B$, it occurs in $N_5'$. Thus we have $\# N_3=21$ for $C>2B$ and $\# N_3=20$ for $C=2B$.

{
\small
\renewcommand{\arraystretch}{1.6} 
\smallskip
\begin{longtable}{|c|c|c|c|}
\hline 
Node&Subset $R$&Condition&Topology of $U(R)$
\endhead
\hline
\circled{$a^4$} & $\{N-P, N-Q\}$ &---&  $2$-ball\\
\hline 
\circled{$b^4$} & $\{N-Q+P\}$ &---& $2$-ball\\
\hline 
\circled{$c^4$} & $(\nS_1\setminus\{P, Q, Q-P\})\cup\{Q-N,Q-N-P\}$ & (C2) & 
$2$ disjoint $2$-balls \\
\hline 
\circled{$d^4$} & $\{P,N-Q,N-Q+P\}$ &---& $2$-ball\\
\hline 
\circled{$e^4$} & $(\nS_1\setminus\{P,Q,Q-P\})\cup\{-P,-Q,P-Q\}$ &---& $2$-ball\\
\hline 
\circled{$f^4$} & $(\nS_1\setminus\{P,Q,Q-P\})\cup\{-Q, P-Q, P-N\}$ &---& $2$-ball\\
\hline 
\circled{$g^4$} & $ \{Q-P\}$ &---& $2$-ball\\
\hline 
\circled{$h^4$}& $\{Q,Q-P,N-P\}$ &---& $2$-ball\\
\hline 
\circled{$i^4$} & $\{N-P,N-Q,N-Q+P\}$ &---& $2$-ball\\
\hline 
\circled{$j^4$} & $(\nS_1\setminus\{P,Q,Q-P\})\cup\{-P,-Q\}$ &---& $2$-ball\\
\hline 
\circled{$k^4$} & $(\nS_1\setminus\{P,Q,Q-P\})\cup\{Q-N-P\}$ &---& $2$ disjoint $2$-balls\\
\hline 
\circled{$l^4$}&$(\nS_1\setminus\{P,Q,Q-P\})\cup(-\nS_1\setminus\{-N, P-Q, P-N\})$ &---& $2$-ball\\
\hline 
\circled{$m^4$} & $\{Q,Q-P,N-P,N-Q\}$ &---& $ 2$-ball\\
\hline 
\circled{$n^4$} & $\{Q,Q-P\}$ &(C2)& $2$-ball\\
\hline 
\circled{$o^4$}&$ \{N-Q,N-Q+P\}$ &(C2)&$ 2$-ball\\
\hline 
\circled{$p^4$}&$(\nS_1\setminus\{P,Q,Q-P\})\cup\{-Q,P-Q\}$ &(C2)& $2$-ball\\
\hline  
\caption{The set $N_4'$. \label{tab:N4} }
\end{longtable}
}

From the next step onwards we need to distinguish between the cases 
\begin{equation*}
\begin{split}
& \mathrm{(C1)} \qquad\qquad \qquad  A=1,\; B=2,\; C \ge 4;
\\
& \mathrm{(C2)} \qquad\qquad \qquad  A=1,\; B\geq 3,\; C\ge 2B.
\end{split}
\end{equation*}
With $N_4'$ as in Table~\ref{tab:N4} we gain $N_4=N_4'\cup N_3$. This entails that  $\# N_4=33$ for $A=1, B=2, C>4$ ($\#N_4=32 \text{ for } C=4$) and $\# N_4=37$ for $A=1, B\geq 3, C>2B$ ($\# N_4=36 \text{ for } C=2B$).

{
\small
\renewcommand{\arraystretch}{1.6} 
\smallskip
\begin{longtable}{|c|c|c|c|}
\hline 
Node&Subset $R$&Condition&Topology of $U(R)$
\endhead
\hline
\circled{$a^5$} & $\{N\}$ &---&  $2$-ball\\
\hline 
\circled{$b^5$} & $\{Q, N, N-P\}$ &---& $2$-ball\\
\hline 
\circled{$c^5$} & $\{N-Q,N-Q+P\}$ & (C1) & 
 $2$-ball \\
\hline 
\circled{$d^5$} & $\{Q,Q-P\}$ &(C1)& $2$-ball\\
\hline 
\circled{$e^5$} & $\{N,N-Q+P,-Q,P-Q\}$ &---& $2$-ball\\
\hline 
\circled{$f^5$} & $\{N, N-P, N-Q, N-Q+P, -Q, P-Q\}$ &(C1)& $2$-ball\\
\hline 
\circled{$g^5$} & $ (\nS_1\setminus\{P,Q,Q-P\}) \cup\{-Q\}$ &---& $2$-ball\\
\hline 
\circled{$h^5$}& $(\nS_1\setminus\{P,Q,Q-P\})\cup
\{-Q,-N,P-N\} $ &---& $2$-ball\\
\hline 
\circled{$i^5$} & $ (\nS_1\setminus\{P,Q,Q-P\})\cup\{-N\} $ &---& $2$ disjoint $2$-balls \\
\hline 
\circled{$j^5$} & $ \nS_1\setminus\{P,Q-P\}$ &---& $2$-ball\\
\hline 
\circled{$k^5$} & $\{N-Q\}$ &---&  $2$-ball\\
\hline
\circled{$l^5$} & $\{N,N-P\}$ &(C2)& $2$-ball\\
\hline 
\circled{$m^5$}&$(\nS_1\setminus\{P,Q,Q-P\})\cup
\{-N,P-N\}$ &(C2)&$2$ disjoint $2$-balls\\
\hline 
\circled{$m^3$} & $\{Q, N, Q-P, N-P\}$&$C=2B$&$2$-ball\\
\hline   
\caption{The set $N_5'$. \label{tab:N5} }
\end{longtable}
}

Now, $N_5=N_5'\cup N_4$, where $N_5'$ is given by Table~\ref{tab:N5}. Then $\# N_5=44$ for (C1) and $\# N_5=47$ for $(C2)$. As we indicated at the step that lead to $N_3$, at this stage the node \circled{$m^3$} is contained in $N_5$ in all the cases. Thus from now onwards we do not have to distinguish the cases $2B< C$ and $2B=C$. 

{
\small
\renewcommand{\arraystretch}{1.6} 
\smallskip
\begin{longtable}{|c|c|c|c|}
\hline 
Node&Subset $R$&Condition&Topology of $U(R)$
\endhead
\hline
\circled{$a^6$} & $\{N, N-P,-P,-Q\}$ &---&  $2$-ball\\
\hline 
\circled{$b^6$} & $\{N,N-P\}$ &(C1)& $2$-ball\\
\hline 
\circled{$c^6$} &$\{N-P\} $ &---& $2$-ball\\
\hline 
\circled{$d^6$} & $\{P\} $ &---& $2$-ball\\
\hline 
\circled{$e^6$} & $(-\nS_1\setminus\{P-Q\})\cup\{N,N-P\}$ &---& $2$-ball\\
\hline 
\circled{$f^6$} & $\{N,N-P,-N,Q-N-P\}$ &---& $2$ disjoint $2$-balls\\
\hline 
\circled{$g^6$} & $\{N, N-P, N-Q+P\}$ &---& $2$-ball\\
\hline 
\circled{$h^6$} & $\{P,Q\}$ &(C2)& $2$-ball\\
\hline 
\caption{The set $N_6'$. \label{tab:N6} }
\end{longtable}
}

We get $N_6=N_6'\cup N_5$ with $N_6'$ as in Table~\ref{tab:N6}.
Thus $\# N_6=51$ for $(C1)$ and $\# N_6=54$ for $(C2)$.

{
\small
\renewcommand{\arraystretch}{1.6} 
\smallskip
\begin{longtable}{|c|c|c|c|}
\hline 
Node&Subset $R$&Condition&Topology of $U(R)$
\endhead
\hline
\circled{$a^7$} & $ \{P,Q,Q-P\}$ &---&  $2$-ball\\
\hline 
\circled{$b^7$} & $ \{Q\}$ &---&  $2$-ball\\
\hline 
\circled{$c^7$} & $ \{P,Q\}$ &(C1)&  $2$-ball\\
\hline 
\circled{$d^7$} & $\{P, Q, N-Q,N-Q+P\}$ &---&  $2$-ball\\
\hline 
\caption{The set $N_7'$. \label{tab:N7} }
\end{longtable}
}

The next step of the iteration yields $N_7=N_7'\cup N_6$ with $N_7$ as in Table~\ref{tab:N7}. Thus $\# N_7=55$ for $A=1,B=2, C\geq 4 $ and $\# N_7=57$ for $A=1,B\geq 3,C\geq 2B$. 
We repeat the procedure once more and observe that $N_8=N_7$ for both conditions, so we have reached the end with $\# \nI=55$ for $A=1,B=2, C\geq 4 $ and $\# \nI=57$ for $A=1,B\geq 3,C\geq 2B$. 

It just remains to insert the edges of $\nI$ according to \eqref{eq:Nedge} in order to end up with the graphs depicted in Figure~\ref{Fig:B=2} and Figure~\ref{Fig:B>2} (and observing Remark~\ref{rem:graphremark}).
\end{proof}

The set $U'(R)=\bigcup_{\alpha\in R}O_\alpha$ with $R$ as in \circled{$d^4$} and \circled{$k^4$} is depicted on the left and right hand side of Figure~\ref{fig:ur}, respectively.

\begin{remark}\label{eq:circalpha}
For each $\alpha\in\nS$, $\overline{\{\alpha\}}$ is a node of $\nI$. In particular,
\circled{$d^6$}$=\overline{\{P\}}$,
\circled{$b^7$}$=\overline{\{Q\}}$,
\circled{$a^5$}$=\overline{\{N\}}$,
\circled{$g^4$}$=\overline{\{Q-P\}}$,
\circled{$c^6$}$=\overline{\{N-P\}}$,
\circled{$k^5$}$=\overline{\{N-Q\}}$, and
\circled{$b^4$}$=\overline{\{ N-Q+P\}}$. This will be of importance later.
\end{remark}

\begin{remark}
The number in the superscript of the labels of the nodes of $\nI$ indicates in which level of Figure~\ref{Fig:B=2} and Figure~\ref{Fig:B>2} a node occurs for the first time. For some nodes this happens at different levels in Figure~\ref{Fig:B=2} and Figure~\ref{Fig:B>2}. In these cases we gave different names to this node in the two graphs. So we have \circled{$n^4$}=\circled{$d^5$}, \circled{$o^4$}=\circled{$c^5$}, \circled{$l^5$}=\circled{$b^6$}, \circled{$h^6$}=\circled{$c^7$}. This fact is of no relevance in the sequel. Only the classes $\overline{R}$ corresponding to \circled{$c^4$} and \circled{$m^5$} are in $\nI$ under condition $(C2)$ but not under $(C1)$.
We just did it that way because it makes it easier to locate the first occurrence of a given node in the figures.
\end{remark}

\begin{lem}\label{lem:auxt1t2}
Let $T$ be an $ABC$-tile with $14$ neighbors and assume that  $A=1$.
Let $t_1,t_2\in \nC$ be essentially disjoint with $\level{t_1} \le \level{t_2}$.
Let $t_2' \in \nC$ with 
\begin{equation}\label{eq:t'tb}
\level{\sub_2'}=\level{\sub_2}+1 \quad\hbox{and}\quad \sub_2'\subset \sub_2. 
\end{equation}
Assume that the type of $\sub_1 \cap \sub_2$ is $\overline{R}\in \nI$. 
Then the type $\overline{R'}$ of $\sub_1 \cap \sub_2'$ is either $\overline{\emptyset}$ or 
%\begin{equation}\label{eq:Ri-1Ri}
$\overline{R} \to \overline{R'} \in \nI$.
%\end{equation}
\end{lem}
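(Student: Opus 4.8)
The plan is to show that refining the deeper tile $\sub_2$ by one level corresponds exactly to appending one edge to the walks in $G(\nS)$ that govern the type, and that appending an edge is precisely the operation $n_d$ generating the edges of $\nI$. Write $i=\level{\sub_1}$ and $j=\level{\sub_2}$, so $\level{\sub_2'}=j+1$; note that $\sub_1$ and $\sub_2'$ are essentially disjoint because $\sub_2'\subset\sub_2$. First assume $\sub_1\ne\sub_\infty$. Using the normalization leading to \eqref{eq:gg12} there are $z\in\Z^3$, $\alpha\in\Z^3\setminus\{0\}$ and $d=d_{j-i}+Md_{j-i-1}+\dots+M^{j-i-1}d_1\in\D_{j-i}$ with $M^i(\sub_1\cap\sub_2)+z=(T+\alpha)\cap M^{i-j}(T+d)$, and the type of $\sub_1\cap\sub_2$ is $\overline{R(\sub_1,\sub_2)}$ as in \eqref{eq:gg13}. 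Since $\sub_2'\subset\sub_2$ has level $j+1$, iterating the set equation \eqref{eq:setequation} once shows that $M^i\sub_2'+z$ is one of the level-$(j+1)$ pieces of $M^{i-j}(T+d)$, so that $M^i(\sub_1\cap\sub_2')+z=(T+\alpha)\cap M^{i-(j+1)}(T+e_0+Md)$ for a unique new digit $e_0\in\D$.

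Next I would read off the type of $\sub_1\cap\sub_2'$ from Lemma~\ref{lem:GStypedescription}. The $\D_{j-i+1}$-expansion of $e_0+Md$ is obtained from that of $d$ by appending $e_0$ as the new lowest-order digit, so the governing walks are exactly the walks defining $R(\sub_1,\sub_2)$ extended by one further edge labelled $e_0$. Comparing with the definition \eqref{eq:ndrDef} of $n_d$ gives the key identity $R(\sub_1,\sub_2')=n_{e_0}(R(\sub_1,\sub_2))$. The case $\sub_1=\sub_\infty$ is handled identically, using \eqref{eq:Rgginf} in place of \eqref{eq:gg13}; there the set of starting nodes is all of $\nS$, which is unchanged by the refinement, so the same identity holds.

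Finally I would translate this into a statement about $\nI$. Since the type of $\sub_1\cap\sub_2$ equals $\overline{R}$, we have $R(\sub_1,\sub_2)=\pm R$; using the symmetry \eqref{eq:GSsymm} to absorb a possible sign, the identity above becomes $R(\sub_1,\sub_2')=\pm\, n_d(R)$ for some $d\in\D$. If $n_d(R)=\emptyset$ the type is $\overline{\emptyset}$, giving the first alternative. Otherwise $\overline{R'}=\overline{n_d(R)}$ is nonempty and lies in the stable set $N_{k_0}$ by the recursion \eqref{eq:Nrec} (because $\overline{R}\in\nI$), hence is a node of $\nI$, and \eqref{eq:Nedge} yields the edge $\overline{R}\to\overline{R'}\in\nI$. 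The conceptual content — one level of refinement equals one application of $n_d$ — is immediate once the normalization is in place; the only real care needed is the bookkeeping of the digit indexing in the refinement and the handling of the $\pm$ sign via \eqref{eq:GSsymm}, which I expect to be the main (though routine) obstacle.
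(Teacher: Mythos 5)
Your proposal is correct and follows essentially the same route as the paper's proof: both identify the one-level refinement of $\sub_2$ with appending one edge (labelled by the new lowest-order digit) to the walks in $G(\nS)$ defining the type, yielding $R'=n_{d}(R)$ up to sign, and then invoke \eqref{eq:Nrec} and \eqref{eq:Nedge} to conclude. The only cosmetic difference is that the paper treats the case $\sub_1=\sub_\infty$ explicitly via \eqref{cell-i}/\eqref{eq:Rgginf} and declares the other case analogous, whereas you do the reverse; your explicit handling of the $\pm$ sign via \eqref{eq:GSsymm} is a detail the paper leaves implicit but is handled correctly.
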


\begin{proof}
Assume first that $\sub_1=\sub_\infty$. Set $\level{\sub_2}=i$. Then there is $d=d_{i}+\cdots+M^{i-1}d_{1} \in \D_{i}$ and $d_{i+1}\in \D$ such that $\sub_2=M^{-i}(T+d)$ and $\sub_2'=M^{-i-1}(T+d_{i+1}+Md)$. Now \eqref{cell-i} and \eqref{eq:Rgginf} yield
\[
\begin{split}
R&=\{\alpha_i \;:\, \hbox{there is $\alpha\in \nS$ with } \alpha\xrightarrow{d_1}\alpha_1\xrightarrow{d_2}\cdots\xrightarrow{d_i}\alpha_i\in G(\nS) \},
\\
R'&=\{\alpha_{i+1} \;:\, \hbox{there is $\alpha\in \nS$ with } \alpha\xrightarrow{d_1}\alpha_1\xrightarrow{d_2}\cdots\xrightarrow{d_i}\alpha_i\xrightarrow{d_{i+1}}\alpha_{i+1}\in G(\nS) \}.
\end{split}
\]
%
%\begin{equation*}
%\begin{split}
%\sub_\infty  \cap   \sub_2' &=  \sub_\infty  \cap   M^{-i-1}(\sub+d_i+Md) \\
%&=M^{-i}
%\bigcup_{\alpha\in \nS}\bigcup_{\alpha_{i+1}:\;\alpha\xrightarrow{d_1}\alpha_1\xrightarrow{d_2}\cdots\xrightarrow{d_{i+1}}\alpha_{i+1}\in G(\nS)} (\B_{\alpha_{i+1}}+d
%) \\
%&=M^{-i-1}
%\bigcup_{\alpha\in \nS}\bigcup_{\alpha_{i}:\;\alpha\xrightarrow{d_1}\alpha_1\xrightarrow{d_2}\cdots\xrightarrow{d_{i}}\alpha_{i}\in G(\nS)} 
%\bigcup_{\alpha_{i+1}:\;\alpha_{i}\xrightarrow{d_{i}}\alpha_{i+1}\in G(\nS)}(\B_{\alpha_{i+1}}+d
%) \\
%&=M^{-i-1}
%\bigcup_{\alpha_{i} \in R}\bigcup_{\alpha_{i+1}:\;\alpha_{i}\xrightarrow{d_{i+1}}\alpha_{i+1}\in G(\nS)} (\B_{\alpha_{i+1}}+d') \\
%&=M^{-i-1}
% \bigcup_{\alpha_{i+1}\in n_{d_{i+1}}(R)} (\B_{\alpha_{i+1}}+d').
%\end{split}
%\end{equation*}
Thus \eqref{eq:ndrDef} yields $R' =n_{d_{i+1}}(R)$, and by \eqref{eq:Nrec} and \eqref{eq:Nedge} either $\overline{R'}=\overline{\emptyset}$ or $\overline{R'}$ satisfies $\overline{R} \to \overline{R'} \in \nI$.

If $t_1\not=t_\infty$ the result follows analogously by using \eqref{eq:gg12} and \eqref{eq:gg13} instead of \eqref{cell-i} and~\eqref{eq:Rgginf}.
\end{proof}

\begin{prop}\label{prop:capGraph}
Let $T$ be an $ABC$-tile with $14$ neighbors and assume that  $A=1$.
If $\sub_1,\sub_2 \in \nC$ are essentially disjoint then the type $\overline{R}$ of $\sub_1\cap\sub_2$ is either $\overline{\emptyset}$ or $\overline{R}\in\nI$.
In particular, the following assertions hold.
\begin{itemize}
\item[(1)]
Let $\sub_1=\sub_\infty$ and $\sub_2 \in \nC\setminus\{\sub_\infty\}$ with $\level{\sub_2}=i$. Then either $\sub_\infty \cap \sub_2 =\emptyset$ or there is a walk
$
\overline{\nS} \rightarrow \overline{R_1} \rightarrow \overline{R_2} 
\rightarrow \cdots \rightarrow \overline{R_i} 
$
of length $i$ in $\nI$ such that $\sub_\infty \cap \sub_2$ is of type $\overline{R_i} $ and, hence, $\sub_\infty \cap \sub_2 \simeq U(R_i)$.

\item[(2)] Let $\sub_1,\sub_2 \in \nC\setminus\{\sub_\infty\}$ be essentially disjoint with $\level{\sub_1}\le\level{\sub_2}$ and let $i=\level{\sub_2}-\level{\sub_1}$. Then either $\sub_1 \cap \sub_2 =\emptyset$ or there is $\alpha\in \nS$ and a walk
$
\overline{\{\alpha\}} \rightarrow \overline{R_1} \rightarrow \overline{R_2} 
\rightarrow \cdots \rightarrow \overline{R_i} 
$
of length $i$ in $\nI$ such that $\sub_1 \cap \sub_2$ is of type $\overline{R_i}$ hence, $\sub_1 \cap \sub_2\simeq U(R_i)$.
\end{itemize}
\end{prop}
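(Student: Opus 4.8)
The plan is to deduce the opening statement from assertions~(1) and~(2), each of which I prove by induction on $i$ with Lemma~\ref{lem:auxt1t2} serving as the inductive step. Since $\level{\sub_\infty}=-\infty$ is the smallest possible level, the convention $\level{\sub_1}\le\level{\sub_2}$ forces $\sub_1=\sub_\infty$ whenever $\sub_\infty$ occurs; hence (1) and (2) together exhaust all essentially disjoint pairs. In each case the type turns out to be either $\overline{\emptyset}$ (when the intersection is empty) or the endpoint $\overline{R_i}$ of a walk in $\nI$ (hence a node of $\nI$), which is precisely the opening assertion.

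The backbone of both inductions is the canonical nested chain of subtiles. Given $\sub_2\in\nC\setminus\{\sub_\infty\}$ with $\level{\sub_2}=j$ and digit expansion $\sub_2=M^{-j}(T+d)$, $d=d_j+Md_{j-1}+\dots+M^{j-1}d_1$, the set equation~\eqref{eq:setequation} yields a unique family $\sub_2=\sub_2^{(i)}\subset\sub_2^{(i-1)}\subset\cdots\subset\sub_2^{(0)}$, obtained by truncating the expansion, where $\sub_2^{(0)}$ has the appropriate low level ($0$ for~(1), $\level{\sub_1}$ for~(2)) and $\level{\sub_2^{(k)}}=\level{\sub_2^{(0)}}+k$. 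I first settle the two base cases. For~(1) with $\sub_2^{(0)}=T$, equation~\eqref{boun_1} gives $\sub_\infty\cap T=\partial T=U(\nS)$, so the type is $\overline{\nS}$, the single node of $N_0$ in $\nI$. For~(2) with $\level{\sub_1}=\level{\sub_2^{(0)}}$, the two subtiles are distinct (otherwise $\sub_2\subseteq\sub_1$ would contradict essential disjointness), hence essentially disjoint subtiles of the same level, and the normalization of Section~\ref{sec:type} gives $M^{\level{\sub_1}}(\sub_1\cap\sub_2^{(0)})+z=(T+\alpha)\cap T=\B_\alpha$; thus the type is $\overline{\{\alpha\}}$ with $\alpha\in\nS$, a node of $\nI$ by Remark~\ref{eq:circalpha}.

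For the inductive step I descend the chain one level at a time and apply Lemma~\ref{lem:auxt1t2} with $t_2=\sub_2^{(k-1)}$ and $t_2'=\sub_2^{(k)}$. Its hypotheses are checked directly: the level condition $\level{\sub_2^{(k)}}=\level{\sub_2^{(k-1)}}+1$ and the inclusion $\sub_2^{(k)}\subset\sub_2^{(k-1)}$ are built into the chain, while essential disjointness of $\sub_1$ and $\sub_2^{(k-1)}$ holds because $\sub_2^{(k-1)}\subseteq\sub_2^{(0)}$ gives $\mu(\sub_1\cap\sub_2^{(k-1)})\le\mu(\sub_1\cap\sub_2^{(0)})=0$ in case~(2), and $\mu(\sub_\infty\cap\sub_2^{(k-1)})\le\mu(\partial T)=0$ by Lemma~\ref{lem:LW1} in case~(1). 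The lemma then shows that the type of $\sub_1\cap\sub_2^{(k)}$ is either $\overline{\emptyset}$ or a successor in $\nI$ of the type of $\sub_1\cap\sub_2^{(k-1)}$, which remains a node of $\nI$, so the induction continues.

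Finally I convert this into the stated walk. The intersections $\sub_1\cap\sub_2^{(k)}$ decrease with $k$, so if the target $\sub_1\cap\sub_2=\sub_1\cap\sub_2^{(i)}$ is nonempty then every $\sub_1\cap\sub_2^{(k)}$ is nonempty; since a type $\overline{\emptyset}$ forces emptiness (because $U(\emptyset)=\emptyset$ together with Lemma~\ref{lem:typetopo}), no intermediate type equals $\overline{\emptyset}$, and the $i$-fold application of Lemma~\ref{lem:auxt1t2} produces a genuine walk $\overline{\nS}\to\overline{R_1}\to\cdots\to\overline{R_i}$ (resp.\ $\overline{\{\alpha\}}\to\cdots\to\overline{R_i}$) of length $i$ in $\nI$, with $\overline{R_i}$ the type of $\sub_1\cap\sub_2$; the homeomorphism $\sub_1\cap\sub_2\simeq U(R_i)$ is then Lemma~\ref{lem:typetopo}. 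I expect the only real work to be the bookkeeping in these hypothesis checks, in particular keeping track of essential disjointness when $\sub_1$ sits at a strictly lower level than the descending chain, since the entire topological content is already packaged in Lemma~\ref{lem:auxt1t2} and the rest is a straightforward iteration.
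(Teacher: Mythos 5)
Your proposal is correct and takes essentially the same route as the paper: induction on $\level{\sub_2}$ (resp.\ on $\level{\sub_2}-\level{\sub_1}$), with base cases of type $\overline{\nS}$ and $\overline{\{\alpha\}}$ and Lemma~\ref{lem:auxt1t2} supplying the inductive step. Your explicit treatment of the nested chain of subtiles and the essential-disjointness checks only spells out what the paper's induction leaves implicit.
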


\begin{proof}
We prove $(1)$. The proof is done by induction on $i=\level{\sub_2}$. If $i=0$ then $\sub_2=T$ and, hence, the type of $\sub_\infty \cap \sub_2$ is $\overline{R}=\overline{\nS} \in \nI$. 

For the induction hypothesis assume that the result holds for all $\sub_2\in\nC$ with $0\le \level{\sub_2} \le i-1$. 
%Note that this hypothesis implies that for each $\sub'\in\nC$ with $\level{\sub'}=i$ we have that the type $\overline{R'}$ of $\sub_\infty \cap \sub'$ is either $\overline{\emptyset}$ or in $\nI$. 

For the induction step let $\sub_2'\in\nC$ with $\level{\sub_2'}=i$ be given and assume that $\sub_2$  satisfies \eqref{eq:t'tb}. Then by the induction hypothesis the type $\overline{R_{i-1}}$ of $\sub_\infty \cap \sub_2$ is either $\overline{\emptyset}$ or there is a walk $\overline{\nS} \rightarrow \overline{R_1} \rightarrow \overline{R_2} 
\rightarrow \cdots \rightarrow \overline{R_{i-1}}$ of length $i-1$
in $\nI$. If $\overline{R_{i-1}}=\overline{\emptyset}$ then \eqref{eq:t'tb} implies that $\sub_\infty \cap \sub_2'=\emptyset$, hence, its type is $\overline{\emptyset}$ as well, and we are done. If  $\overline{R_{i-1}} \in \nI$ then by Lemma~\ref{lem:auxt1t2} the type $\overline{R_i}$ of $\sub_\infty \cap \sub_2'$ is either $\overline{\emptyset}$ or satisfies $\overline{R_{i-1}} \to \overline{R_i} \in \nI$. In the latter case there is a walk $\overline{\nS} \rightarrow \overline{R_1} \rightarrow \overline{R_2} 
\rightarrow \cdots \rightarrow \overline{R_{i}}$ of length $i$
in $\nI$. This finishes the induction step.

The case $\sub_1\not=\sub_\infty$ follows analogously by induction on $\level{\sub_2}-\level{\sub_1}$. Just note that, if $\level{\sub_1}=\level{\sub_2}$, then $\sub_1\cap \sub_2$ is either empty or has type $\overline{\{\alpha\}} \in \nI$ for some $\alpha\in \nS$ (observe Remark~\ref{eq:circalpha}).

The  fact that $\sub_1 \cap \sub_2 \simeq U(R)$ if it has type $U(R)$ is already contained in Lemma~\ref{lem:typetopo}.
\end{proof}

\section{Proofs of the main results}\label{sec:4}

This section is devoted to the proofs of our main results. In Section~\ref{sec:par} we recall the definition of partitionings in the sense of Bing~\cite{Bing51} and give some results on partitionings that will be needed in the sequel. In Section~\ref{sec:seqpart} we define sequences of partitionings that are suitable for our purposes. In Section~\ref{sec:order} we make sure that in these sequences each atom is subdivided in a way that certain connectivity properties are maintained. Finally, Sections~\ref{sec:proof1} and~\ref{sec:proof2} contain the proofs of Theorem~\ref{thm:ball} and Theorem~\ref{thm:CW}, respectively.

\subsection{Partionings}\label{sec:par}
In this section we give the definitions and results of  Bing's theory of partitionings~\cite{Bing51} that will be relevant for the proof of Theorem~\ref{thm:ball}. We start with some terminology.

\begin{definition}[Partitioning]
Let $X$ be a metric space. A {\em partitioning} of $X$ is a collection of mutually disjoint open sets (so-called {\it atoms}) whose union is dense in $X$. A partitioning is called \emph{regular} if each of its atoms is the interior of its closure. Let $G$ and $G'$ be two partitionings of $X$. $G'$ is a {\it refinement} of $G$ if for each $g'\in G'$ there exists $g\in G$ with $g'\subseteq g$.
A sequence $(G_i)_{i\ge 1}$ of partitionings is called a {\em decreasing sequence of partitionings} if $G_{i+1}$ is a refinement of $G_i$ and the maximum of the diameters of the atoms of $G_i$ tends to $0$ as $i$ tends to infinity.
\end{definition}

\begin{definition}[Equivalent sequences of partitionings]\label{Def:Homeo}
Let $X_1$ and $X_2$ be two metric spaces. Let $(G_{ij})_{j\ge 1}$ be a sequence of partitionings of $X_i$ for each $i\in \{1,2\}$. We say that $(G_{1j})$ and  $(G_{2j})$ are {\em equivalent partitionings}, if for each $j\ge 1$ there exists a 1 to 1 correspondence between the atoms of $G_{1j}$ and $G_{2j}$ such that
\begin{itemize}
\item[(1)] two atoms of $G_{1j}$ have a boundary point in common if and only if the corresponding atoms of $G_{2j}$ have a boundary point in common.
\item[(2)] corresponding atoms of $G_{1,j+1}$ and $G_{2,j+1}$ are subsets of corresponding atoms of $G_{1j}$ and~$G_{2j}$.
\end{itemize}
If $(G_{1j})$ and  $(G_{2j})$ are equivalent we write $(G_{1j})\sim (G_{2j})$.

We say that two finite sequences $(G_{ij})_{j=1}^n$ of partitionings of $X_i$ ($i\in \{1,2\}$) are {\em equivalent} if  for each $j\in\{1,\ldots, n\}$ there exists a 1 to 1 correspondence between the atoms of $G_{1j}$ and $G_{2j}$ such that (1) holds for $1\le j \le n$ and (2) holds for $1\le j <n$.
\end{definition}

\begin{remark}\label{rem:simeq}
It is easy to check that the relation ``$\sim$'' is an equivalence relation.
\end{remark}

The following lemma, which can be easily proved, is just a reformulation of \cite[Theorem~6]{Bing51}.

\begin{lem}\label{lem:homeo}
Two Peano continua $X_1$ and $X_2$ are homeomorphic if and only if for each $i\in\{1,2\}$ there exists a decreasing sequence of partitionings $(G_{ij})_{j\ge1}$ for $X_i$ such that $(G_{1j})_{j\ge1}\sim (G_{2j})_{j\ge1}$.
\end{lem}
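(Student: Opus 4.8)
The plan is to prove both implications. Throughout I use that a Peano continuum is a compact metric space, and that the partitionings occurring here are \emph{finite} collections of atoms (as the use of the word ``maximum'' in the definition of a decreasing sequence indicates, and as is the case in all our applications); finiteness guarantees that $X_i=\bigcup_{g\in G_{ij}}\overline g$ and that finite unions of atom-closures are closed. I also use that decreasing sequences of partitionings exist for any compact metric space (\emph{cf.}~\cite{Bing51}).

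For the direction ``homeomorphic $\Rightarrow$ equivalent sequences'', fix a homeomorphism $\varphi\colon X_1\to X_2$ and any decreasing sequence of partitionings $(G_{1j})_{j\ge1}$ of $X_1$. I set $G_{2j}=\{\varphi(g):g\in G_{1j}\}$ with the correspondence $g\leftrightarrow\varphi(g)$. Since $\varphi$ is a homeomorphism it preserves openness, disjointness, density of the union, the refinement relation (hence property (2) of Definition~\ref{Def:Homeo}), and the relation of sharing a boundary point (hence property (1)); the only point that needs a word is that the atom diameters of $G_{2j}$ still tend to $0$, which follows from the uniform continuity of $\varphi$ on the compact space $X_1$. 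Thus $(G_{2j})$ is a decreasing sequence of partitionings of $X_2$ with $(G_{1j})\sim(G_{2j})$.

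For the converse I construct $h\colon X_1\to X_2$ directly. For $x\in X_1$ and each $j$ let $S_j(x)=\{g\in G_{1j}:x\in\overline g\}$, which is nonempty (by finiteness) and finite. Any nested chain $g_1\supseteq g_2\supseteq\cdots$ with $g_j\in S_j(x)$ (such chains exist by the refinement property) yields, via the correspondence and property (2), a nested chain $g_1'\supseteq g_2'\supseteq\cdots$ in $X_2$; since $\mathrm{diam}(\overline{g_j'})\to0$ and $X_2$ is compact, $\bigcap_j\overline{g_j'}$ is a single point, which I take to be $h(x)$. The essential step, and the place where the whole argument hinges on property (1), is that $h(x)$ is independent of the chosen chain: if $g_j,\tilde g_j\in S_j(x)$ then $\overline{g_j}\cap\overline{\tilde g_j}\ni x$, so by (1) $\overline{g_j'}\cap\overline{\tilde g_j'}\neq\emptyset$, and since both closures have diameter tending to $0$ the two candidate limit points lie within $\mathrm{diam}(\overline{g_j'})+\mathrm{diam}(\overline{\tilde g_j'})\to0$ of one another, hence coincide. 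In particular $h(x)\in\overline{g'}$ for every $g\in S_j(x)$, i.e.\ $h(\overline g)\subseteq\overline{g'}$ for each atom $g$ at each level.

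It remains to check that $h$ is a homeomorphism, and this is where the main obstacle sits: establishing continuity and single-valuedness precisely at points lying on the common boundary of several atoms, which is exactly what property (1) is designed to control. Continuity follows from finiteness: given $\varepsilon>0$, choose $j$ so that every atom of $G_{2j}$ has diameter $<\varepsilon$; the set $V=\bigcup\{\overline g:g\in G_{1j},\,x\notin\overline g\}$ is closed and misses $x$, and for $y\in X_1\setminus V$ one has $S_j(y)\subseteq S_j(x)$, so $h(y)$ and $h(x)$ both lie in $\overline{g'}$ for a common atom $g\in S_j(y)$, giving $d(h(y),h(x))<\varepsilon$. Running the same construction with the inverse correspondence produces $k\colon X_2\to X_1$, and the inclusions $h(\overline g)\subseteq\overline{g'}$, $k(\overline{g'})\subseteq\overline g$ together with the vanishing of diameters force $k\circ h=\mathrm{id}_{X_1}$ and $h\circ k=\mathrm{id}_{X_2}$. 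Hence $h$ is a continuous bijection of the compact space $X_1$ onto the Hausdorff space $X_2$, and therefore a homeomorphism. Since all of this is the content of \cite[Theorem~6]{Bing51}, one may alternatively just invoke that theorem.
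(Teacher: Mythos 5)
Your argument is correct, but it is worth noting that the paper does not actually prove this lemma: it simply remarks that the statement ``can be easily proved'' as a reformulation of \cite[Theorem~6]{Bing51}, so your self-contained proof supplies details the paper deliberately omits. The forward direction (push a decreasing sequence of partitionings of $X_1$ forward through the homeomorphism, using uniform continuity on the compact space to keep the diameters tending to $0$) and the converse (define $h(x)$ as the intersection of the closures of a nested chain of corresponding atoms, use property~(1) of Definition~\ref{Def:Homeo} for well-definedness, finiteness for continuity, and the symmetric construction for the inverse) are exactly Bing's strategy, and every step you give checks out. Two small points deserve a word. First, your reading that partitionings are \emph{finite} collections is essential for ``$X_i=\bigcup_{g\in G_{ij}}\overline g$'' and for the closedness of $V$; it is consistent with Bing's own definition and with every partitioning used in this paper ($\nP_i$, $\nQ_i(\mathbf{n})$, $H_i$, $K_{n_j}$ are all finite), but since the paper's Definition~4.1 does not say ``finite'' explicitly, it is good that you flagged the assumption rather than used it silently. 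Second, the assertion ``$h(x)\in\overline{g'}$ for every $g\in S_j(x)$'' needs that every $g\in S_j(x)$ lies on \emph{some} nested chain through $x$ (extend upward to the unique atom of $G_{1,j-1}$ containing $g$, and downward using that the closures of the atoms of $G_{1,j+1}$ contained in $g$ cover $\overline g$); this is true and follows from your well-definedness argument, but the ``in particular'' elides it. Neither point is a gap.
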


This lemma will be used in the proof of Theorem~\ref{thm:ball}. Indeed, we will construct a decreasing sequence of partitionings for the self-affine tile $T$ (which is a Peano continuum by Lemma~\ref{lem:HataPeano}) that is equivalent to a decreasing sequence of partitionings of $\mathbb{D}^3$. In the course of our proof we will use the following two results from \cite{Bing51}. The first one is about the extension of homeomorphisms. Recall that a $2$-sphere $C$ in $\R$ is {\em tame} if there is a homeomorphism from $\R^3$ to $\R^3$ that maps $C$ to the unit sphere $\mathbb{S}^2$ in $\R^3$.

\begin{prop}[{see \cite[Theorem~3]{Bing51}}]\label{prop:bing3}
Let $S$ be a Peano continuum and $S_2\subset S$ a $2$-sphere. Let $C\subset \R^3$ be a tame $2$-sphere and $F:S_2\to C$ a homeomorphism. Assume that $G$ is a regular partitioning of $S$ satisfying the following conditions.
\begin{itemize}
\item[(1)] If $g \in G$ then $\partial g \cong \mathbb{S}^2$.
\item[(2)] If $g_1,g_2\in G$ are distinct then $\partial g_1 \cap \partial g_2$ is either empty or  a finite union of mutually disjoint $2$-balls.
\item[(3)] If $g_1,g_2,g_3\in G$ are mutually distinct then $\partial g_1 \cap \partial g_2 \cap \partial g_3$ is either empty or a finite union of arcs.
\item[(4)] There exist $g_1,\ldots, g_n \in G$ such that $S_2 =\partial(\overline{g_1\cup\dots\cup g_n})$ and  such that the intersection $\partial g_j \cap (S_2\cup \partial g_1\cup\dots\cup \partial g_{j-1})$ is connected for each $j\in \{1,\ldots, n\}$. 
\end{itemize}
Then there is a partitioning $\{h_0,h_1,\ldots,h_n\}$ of ${\R}^3$ and a homeomorphism $F':\partial_S(g_1\cup\dots\cup g_n) \to \partial_{\mathbb{R}^3}(h_1\cup\dots\cup h_n)$ such that $h_0$ is the exterior of $C$ and $\partial h_i$ is a tame $2$-sphere, $F=F'$ on $S_2$, and $F'(\partial g_i) = \partial h_i$ ($1\le i \le n$). 
\end{prop}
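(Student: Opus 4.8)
The plan is to deduce the statement directly from \cite[Theorem~3]{Bing51}; the substance of the argument is Bing's, and our only task is to recognize that hypotheses (1)--(4) are exactly the conditions under which Bing extends a boundary homeomorphism across a shellable partitioning. First I would set up the dictionary between the two formulations. The region cut out by $S_2$, namely $\overline{g_1\cup\dots\cup g_n}$, is partitioned by the atoms $g_1,\dots,g_n$; condition (1) says that each atom boundary is a $2$-sphere, conditions (2) and (3) say that the spheres $\partial g_i$ meet pairwise in disjoint $2$-cells and triplewise in arcs, so that the $2$-complex $\partial_S(g_1\cup\dots\cup g_n)$ has the combinatorial type Bing prescribes, and condition (4) is Bing's \emph{shelling} hypothesis that permits the atoms to be adjoined one at a time along a connected piece of their boundary. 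Granting this dictionary, Bing's theorem yields the partitioning $\{h_0,\dots,h_n\}$ of $\R^3$ with $h_0$ the exterior of $C$, the tameness of each $\partial h_i$, and the extension $F'$ with $F'=F$ on $S_2$ and $F'(\partial g_i)=\partial h_i$.

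To see why the construction goes through, and to have it available in our notation, I would carry out Bing's induction on $j$. Suppose the extension constructed so far is a homeomorphism of $S_2\cup\partial g_1\cup\dots\cup\partial g_{j-1}$ onto the corresponding subcomplex of $\R^3$, and that tame $2$-spheres $\partial h_1,\dots,\partial h_{j-1}$ have already carved the regions $h_1,\dots,h_{j-1}$ out of the interior of $C$. By (4) the attaching set $D_j=\partial g_j\cap(S_2\cup\partial g_1\cup\dots\cup\partial g_{j-1})$ is connected, and by (2) and (3) it is a finite union of $2$-cells glued along arcs; the key local fact, which the connectedness in (4) is designed to guarantee, is that $D_j$ is itself a single $2$-cell. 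Since $\partial g_j\cong\mathbb{S}^2$ by (1), the closure $\overline{\partial g_j\setminus D_j}$ of the complementary \emph{free} region is then also a $2$-cell. I would push a tame disk spanning $F'(\partial D_j)$ into the region already built, thereby cutting off a new region $h_j$ bounded by a tame $2$-sphere, and extend $F'$ across the free $2$-cell of $\partial g_j$ using the tame Schoenflies theorem.

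The main obstacle is the preservation of tameness at every stage: in $\R^3$ one must ensure that each newly inserted wall is a \emph{tame} disk and that the accumulated walls bound tame $2$-spheres, for otherwise wild phenomena such as the Alexander horned sphere could intrude. This is exactly where conditions (2), (3) and, above all, the connectedness in (4) are indispensable, since they force the new disk to be attached along a $2$-cell rather than along a wild or disconnected set; only then can the extension be realized by the tame Schoenflies theorem while the inductive tameness hypothesis is maintained. Accordingly, the only genuinely necessary verification on our side is that the formulation given here imposes neither more nor less than Bing's hypotheses, so that no additional regularity of $S$ or of the partitioning $G$ is required; once this is checked, the conclusion is precisely \cite[Theorem~3]{Bing51}.
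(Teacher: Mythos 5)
The paper gives no proof of this proposition at all: it is stated as a direct reformulation of \cite[Theorem~3]{Bing51}, and your proposal does exactly the same thing, matching hypotheses (1)--(4) to Bing's and invoking his result, so the approach coincides. (Your supplementary sketch of Bing's induction is not load-bearing for the reduction; the one assertion in it worth double-checking is that connectedness of the attaching set $D_j$ forces it to be a single $2$-cell, which does not follow from (2)--(4) alone since a connected union of $2$-cells meeting in arcs on a $2$-sphere can, e.g., be an annulus --- but this is internal to Bing's argument and does not affect the citation-based deduction.)
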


The next result will be used in the proof of Theorem~\ref{thm:ball} in the context of decreasing sequences of partitionings.

\begin{prop}[{see \cite[Theorem~5]{Bing51}}]\label{prop:bing5}
Let $C\subset \mathbb{R}^3$ be a tame $2$-sphere and $(G_i)_{i\ge 1}$ a sequence of partitionings of ${\R}^3$ satisfying the following conditions for each $i\ge 1$.
\begin{itemize}
\item[(1)] If $g \in G_i$ then $\partial g \cong \mathbb{S}^2$ is tamely embedded in $\R^3$.
\item[(2)] For each $g\in G_i$ with $\overline{g}\cap C\not=\emptyset$ the set $\partial g \cap C$ is connected and does not separate $\partial g$.
\item[(3)] $G_{i+1}$ is a refinement of $G_i$.
\item[(4)] One atom $g_0\in G_i$ is the exterior of $C$.
\item[(5)] For each $\varepsilon > 0$ and each $i\in\N$ there is $n=n(i,\varepsilon)\ge 1$ such that $\overline{g'} \cap \bigcup_{g\in G_i} \partial g$ has diameter less than $\varepsilon$ for each $g'\in G_n\setminus\{g_0\}$.
\end{itemize}  
Then for each $\delta >0$ there is $m\ge 1$ and a homeomorphism $F:{\R}^3\to{\R}^3$ such that $F$ leaves each point of $g_0$ invariant and ${\rm diam}(F(g))<\delta$ for each $g\in G_m\setminus\{g_0\}$. 
\end{prop}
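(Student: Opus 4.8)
\emph{Strategy.} The statement is a \emph{shrinking theorem}: one must produce a self-homeomorphism $F$ of $\R^3$ that fixes the atom $g_0$ pointwise and squeezes every atom of some sufficiently fine generation $G_m$ into a set of diameter less than $\delta$. Since $F$ is to fix $g_0=\mathrm{ext}(C)$ pointwise, it must fix $C=\partial g_0$ and carry the bounded complementary region $B=\overline{\R^3\setminus g_0}$ onto itself; as $C$ is tame, the Schoenflies theorem identifies $B$ with a genuine closed $3$-ball having $\partial B=C$. Hence the whole problem takes place inside the $3$-ball $B$, relative to its boundary sphere $C$. The plan is to build $F$ as a telescoping composition of ``inward push'' homeomorphisms, one associated with each generation $G_k$, each of which fixes the $2$-skeleton $\Sigma_k=\bigcup_{g\in G_k}\partial g$ (together with $C$) pointwise and pushes the cores of the $G_k$-atoms toward interior points.

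\emph{The elementary move.} For a fixed generation $k$ I would construct a homeomorphism $h_k$ of $B$, fixing $\Sigma_k\cup C$ pointwise, whose restriction to each atom $g\in G_k$ pushes $g$ minus a thin collar of $\partial g$ into a prescribed small ball. Because distinct atoms of $G_k$ meet only along $\Sigma_k$, which $h_k$ fixes, the individual per-atom maps glue to a single homeomorphism of $B$, and, extended by the identity over $g_0$, to one of $\R^3$. For atoms $g$ with $\overline g\cap C\neq\emptyset$ the push must be compatible with fixing $C$: here one uses that $C$ is tamely bicollared, so a product neighborhood $C\times[0,1)$ supplies room to slide material off $C$, while condition~(2) --- that $\partial g\cap C$ is connected and does not separate $\partial g$ --- guarantees that the face of $g$ lying on $C$ is a single cell-like region, so that such an inward push exists and restricts to the identity on $C$.

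\emph{Telescoping and the role of~(5).} Fix an intermediate generation $i$, a tolerance $\varepsilon<\delta$, and set $m=n(i,\varepsilon)$ as in condition~(5), so that $\overline{g'}\cap\Sigma_i$ has diameter less than $\varepsilon$ for every $g'\in G_m\setminus\{g_0\}$. An atom $g'\in G_m$ lies in a unique atom $g\in G_i$; after applying $h_i$ it is carried either into the small core-ball of $g$ (if $g'$ avoids the collar of $\partial g$) or into a controlled neighborhood of $\overline{g'}\cap\Sigma_i$, a set that is already $\varepsilon$-small by~(5). Iterating this through the finitely many generations between $i$ and $m$ --- which is legitimate because~(5) holds for \emph{every} index, so each intermediate skeleton is met in small sets --- and choosing the push radii together with $\varepsilon$ small enough forces $\mathrm{diam}(F(g'))<\delta$. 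Thus the uniform fineness~(5) is exactly the quantitative input that lets one select $m$ and obtain the diameter bound, while conditions~(1)--(3) supply the sphere boundaries and the controlled pairwise and triple intersections that keep the successive pushes well defined.

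\emph{Main obstacle.} The crux is that for $g\neq g_0$ condition~(1) only gives $\partial g\cong\mathbb{S}^2$ as an abstract homeomorphism, \emph{not} a tame embedding; consequently $\overline g$ need not be a $3$-ball and one cannot shrink cells one at a time by a naive radial contraction. The genuine difficulty is to carry out the inward pushes using only the combinatorial and metric data in~(1)--(5), so that the telescoping composition is a \emph{bona fide} homeomorphism of $\R^3$ rather than a mere set map, and so that the diameter estimate is \emph{uniform} over all atoms of $G_m$. Tameness of $C$ is invoked only to obtain the collar on the single sphere $C$ that keeps $g_0$ fixed, and condition~(2) is what renders the cells abutting $C$ tractable; converting the qualitative ``fineness relative to skeleta'' of~(5) into the hard $\delta$-bound on $\mathrm{diam}(F(g'))$ is the step I expect to require the most care.
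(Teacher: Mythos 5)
First, a point of reference: the paper offers no proof of this proposition at all --- it is quoted verbatim (in reformulated form) from Bing's 1951 paper as \cite[Theorem~5]{Bing51} and used as a black box. So you are in effect being asked to reprove a substantial theorem of Bing, and your proposal should be judged as a free-standing argument. As such, it correctly identifies the overall shape of the argument (a telescoping composition of ``shrinking'' homeomorphisms, with condition~(5) supplying the quantitative fineness and condition~(2) taming the atoms that abut $C$), but it has a genuine gap exactly where you locate the ``main obstacle'': the elementary move is asserted, not constructed, and its construction is essentially the whole content of the theorem. Concretely, you posit a homeomorphism $h_k$ of the ball $B$ fixing $\Sigma_k\cup C$ pointwise that pushes each atom $g$, minus a thin collar of $\partial g$, into a small ball. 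Since $\partial g$ is only abstractly homeomorphic to $\mathbb{S}^2$ and nothing guarantees it is bicollared in $\overline{g}$ (indeed $\overline{g}$ need not be a $3$-ball --- ruling out Alexander-horned behaviour is the point of the whole enterprise), there is no collar $\partial g\times[0,1)$ to push along, and no a priori reason such an $h_k$ exists. Saying ``the genuine difficulty is to carry out the inward pushes using only (1)--(5)'' concedes that the key step is missing rather than supplying it.

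Even granting the elementary move, the diameter estimate in your telescoping step does not close. If $g'\in G_m$ lies inside $g\in G_i$ and meets the collar of $\partial g$, your $h_i$ places the collar-portion of $h_i(g')$ somewhere in a \emph{thin} neighborhood of $\partial g$ --- a set of small thickness but of diameter comparable to $\mathrm{diam}(\partial g)$, hence not small. Condition~(5) controls $\mathrm{diam}(\overline{g'}\cap\Sigma_i)$, but a homeomorphism fixing $\Sigma_i$ pointwise need not map points of $g'$ that are merely \emph{near} $\partial g$ to points near $\overline{g'}\cap\partial g$; a long thin atom hugging $\partial g$ and touching it in a single point defeats the estimate as written. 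Bing's actual argument has to interleave the levels much more carefully (and this is precisely why the present paper goes to the trouble of building the repetitive sequence $(\nQ_i'(\mathbf{n}))$ so that hypothesis~(2) of the proposition can be verified, rather than reproving the proposition itself). In short: right skeleton, but the two load-bearing steps --- existence of the per-atom shrink without tameness of $\partial g$, and the conversion of ``thin'' into ``small diameter'' --- are both open in your write-up.
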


\subsection{Sequences of partitionings}\label{sec:seqpart}

Let $T=T(M,\D)$ be an $ABC$-tile. In Section~\ref{sec:intersec} it was convenient to work with closed sets (the subtiles of $T$). When it comes to partitionings, open sets are required. Therefore, in the sequel we will mainly work  with the interiors of subtiles. Moreover, we often use the one point compactification $\mathbb{S}^3 = \mathbb{R}^3\cup\{\infty\}$ as ambient space because $\mathbb{S}^3$ is a Peano continuum. The following lemma provides a first sequence of partitionings defined in terms of interiors of subtiles. We will frequently use the notation $g_{\infty}=\mathbb{S}^3\setminus T$ in the sequel. Note that $g_\infty = \sub_\infty^\circ \cup \{\infty\}$ and, hence, $\partial_{\mathbb{S}^3}g_\infty=\partial_{\mathbb{R}^3}\sub_\infty$.

\begin{lem}\label{lem:PiPart}
Let $T$ be an $ABC$-tile. Let $g_{\infty}=\mathbb{S}^3\setminus T$. Then for each $i\ge 0$ the collection
\begin{equation}\label{eq:partitioning}
\nP_i =  \left\{ M^{-i}(T+z)^\circ  \;:\; z \in  \D_{i}  \right\}
 \cup \{g_{\infty}\}
 \end{equation}
is a regular partitioning of $\mathbb{S}^3$. Moreover $\nP_i\setminus\{g_\infty\}$ is a regular partitioning of $T$.
 \end{lem}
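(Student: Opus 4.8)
The plan is to verify directly the four defining properties of a regular partitioning for the collection $\nP_i$, viewing it as a collection of subsets of the Peano continuum $\mathbb{S}^3=\R^3\cup\{\infty\}$. The only substantial input is Lemma~\ref{lem:LW1}, applied not to $T$ itself but to each subtile $M^{-i}(T+z)$ with $z\in\D_i$: since such a subtile is the image of $T$ under the homeomorphism $x\mapsto M^{-i}x+M^{-i}z$ of $\R^3$, it too equals the closure of its own interior and has a boundary of Lebesgue measure $0$. Writing $t_z=M^{-i}(T+z)$, so that $t_z^\circ=M^{-i}(T+z)^\circ$ and, by \eqref{eq:iset}, $T=\bigcup_{z\in\D_i}t_z$, the whole argument reduces to bookkeeping with these subtiles.

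First I would check openness and pairwise disjointness of the atoms. Each $t_z^\circ$ is open in $\R^3$, hence in $\mathbb{S}^3$, and $g_{\infty}=\mathbb{S}^3\setminus T$ is open because $T$ is compact. For disjointness, \eqref{eq:intersectDi} says distinct subtiles $t_{z_1},t_{z_2}$ are essentially disjoint, so the equivalence \eqref{eq:inteq} gives $t_{z_1}^\circ\cap t_{z_2}^\circ=\emptyset$; moreover $t_z^\circ\subseteq T^\circ$ is disjoint from $g_{\infty}$.

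Next come density and regularity. The complement in $\mathbb{S}^3$ of $\bigcup_z t_z^\circ\cup g_{\infty}$ equals $T\setminus\bigcup_z t_z^\circ$, and since $T=\bigcup_z t_z$ every point of this complement lies on some $\partial t_z$; thus the complement is contained in the finite union $\bigcup_z\partial t_z$, which is closed and of Lebesgue measure $0$, hence nowhere dense. Therefore the union of the atoms is open and dense, as required. Regularity then follows from the closure-of-interior property: for the finite atoms $\overline{t_z^\circ}=t_z$ (by Lemma~\ref{lem:LW1} for the subtile), whence $\mathrm{int}_{\mathbb{S}^3}(\overline{t_z^\circ})=\mathrm{int}_{\mathbb{S}^3}(t_z)=t_z^\circ$; and for $g_{\infty}$ one computes $\overline{g_{\infty}}=\mathbb{S}^3\setminus T^\circ$ and then $\mathrm{int}_{\mathbb{S}^3}(\mathbb{S}^3\setminus T^\circ)=\mathbb{S}^3\setminus\overline{T^\circ}=\mathbb{S}^3\setminus T=g_{\infty}$, again using $\overline{T^\circ}=T$. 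This establishes that $\nP_i$ is a regular partitioning of $\mathbb{S}^3$.

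For the final assertion I would run the same verification with $T$ in place of $\mathbb{S}^3$: the atoms $t_z^\circ$ are open in $T$ and pairwise disjoint, and their union is dense in $T$ because $\bigcup_z\partial t_z$ is nowhere dense in $T$ (here one uses that $T^\circ$ is dense in $T$ by Lemma~\ref{lem:LW1}); regularity is checked relative to $T$ from the same identity $\overline{t_z^\circ}=t_z$. The step I expect to be most delicate is exactly this density/regularity verification, i.e.\ pinning down the set of points that the open subtiles fail to cover and confirming it is negligible; everything there rests on the measure-zero boundary statement of Lemma~\ref{lem:LW1} together with the essential disjointness \eqref{eq:intersectDi} and its topological reformulation \eqref{eq:inteq}.
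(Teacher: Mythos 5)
Your proposal is correct and follows essentially the same route as the paper's (much terser) proof: both rest on the iterated set equation \eqref{eq:iset}, on Lemma~\ref{lem:LW1} transported to each subtile $M^{-i}(T+z)$ by the affine homeomorphism, and on the essential disjointness \eqref{eq:intersectDi} (equivalently \eqref{eq:inteq}); you merely spell out the openness, disjointness, density and regularity checks that the paper leaves implicit. The only cosmetic difference is the order of the two assertions (the paper deduces the $\mathbb{S}^3$ statement from the $T$ statement, you do the reverse), which is immaterial.
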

 
\begin{proof}
Let $i\ge 0$. By \eqref{eq:iset} we have $T= \bigcup_{d\in \D_i}M^{-i}(T+d)$. Since each subtile $M^{-i}(T+d)$, $d\in \D_i$, is the closure of its interior by Lemma~\ref{lem:LW1}, we see form \eqref{eq:intersectDi} that $\nP_i\setminus\{g_\infty\}$ is a regular partitioning of $T$. Thus $\nP_i$ is a regular partitioning of  $\mathbb{S}^3$.
 \end{proof}

We use the definition of {\it level} from \eqref{eq:leveldef} also for the elements of $\nP_i$, $i\ge 0$. Indeed, we set 
\[
\level{g} = \level{\overline{g} \setminus\{\infty\}} 
\]
for $g\in \bigcup_{i\ge 0} \nP_i$. As usual, for a subset $Y\subset  \bigcup_{i\ge 0} \nP_i$ we will write $\level{Y}=\{\level{g}\,:\, g\in Y\}$.

In view of  \eqref{eq:inteq} an intersection $\partial g_1 \cap \partial g_2$ for disjoint atoms $g_1,g_2 \in \bigcup_{i\ge 0} \nP_i $ is equal to the intersection $\overline{g_1} \cap \overline{g_2}$ of the corresponding elements $\overline{g_1},\overline{g_2}\in\nC$. 

We continue with topological properties of intersections of boundaries of the atoms of $\nP_i$. 

\begin{lem}\label{lem:division}
Let $T$ be an $ABC$-tile with $14$ neighbors and assume that  $A=1$. Let  $i\ge 2$. For any $g\in \nP_i\setminus\{g_\infty\}$, the intersection $\partial g_\infty \cap \partial g$ is either empty, or a union of at most $2$ disjoint $2$-balls.
\end{lem}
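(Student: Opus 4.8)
The plan is to reduce the topological claim to a purely combinatorial statement about the intersection graph $\nI$ and then read off the answer from the tables in Section~\ref{sec:isgraph}. By Lemma~\ref{lem:typetopo} together with Proposition~\ref{prop:capGraph}~(1), for $g\in\nP_i\setminus\{g_\infty\}$ with $\level{g}=i$ we have $\partial g_\infty \cap \partial g = \sub_\infty\cap\overline{g}$, and this intersection is either empty or of type $\overline{R_i}$, where $\overline{R_i}$ is reached by a walk
\[
\overline{\nS} \rightarrow \overline{R_1} \rightarrow \cdots \rightarrow \overline{R_i}
\]
of length $i$ in $\nI$. Moreover $\sub_\infty\cap\overline{g}\simeq U(R_i)$. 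Thus the topology of $\partial g_\infty\cap\partial g$ is completely determined by which node $\overline{R_i}$ of $\nI$ can be reached from the root $\overline{\nS}$ by a walk of length $i\ge 2$, and by the topology of $U(R_i)$ recorded in the fourth column of Tables~\ref{tab_N1}--\ref{tab:N7}.

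First I would observe that the claim is really a statement about the set of nodes of $\nI$ that are \emph{reachable from $\overline{\nS}$ by walks of length at least $2$}. The root $\overline{\nS}$ itself corresponds to $U(\nS)=\partial T\simeq\mathbb{S}^2$, and its length-$1$ successors are $\overline{\nS\setminus\{P\}}$ (node \circled{$a^1$}, a $2$-ball) and $\overline{\nS\setminus\{P,-P\}}$ (node \circled{$b^1$}, a ribbon $\mathbb{S}^1\times[0,1]$); these are precisely the level-$0$ and level-$1$ intersections that the hypothesis $i\ge 2$ is designed to exclude. So the content of the lemma is: every node of $\nI$ other than the root and its two immediate successors \circled{$a^1$}, \circled{$b^1$} has $U(R)$ homeomorphic either to a single $2$-ball or to a disjoint union of two $2$-balls.

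Then I would simply scan the fourth column of every table from $N_2'$ onward (Tables~\ref{tab:N2} through \ref{tab:N7}, which together list all nodes first occurring at level $\ge 2$). Each entry there reads either ``$2$-ball'' or ``$2$ disjoint $2$-balls''; no other topological type appears. This verifies that for every node $\overline{R_i}$ reachable by a walk of length $i\ge 2$, the set $U(R_i)$ is either a single $2$-ball or two disjoint $2$-balls, and hence by Lemma~\ref{lem:typetopo} so is $\partial g_\infty\cap\partial g = \sub_\infty\cap\overline{g}$. I would phrase this uniformly for both cases (C1) and (C2) of the classification, noting that the two extra nodes \circled{$c^4$} and \circled{$m^5$} occurring only under (C2) also carry the label ``$2$ disjoint $2$-balls''. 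The case $\partial g_\infty\cap\partial g=\emptyset$ is already allowed by the statement.

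The main obstacle is not genuinely a conceptual one but a bookkeeping one: I must be sure that \emph{no} node of $\nI$ first appearing at level $\ge 2$ has $U(R)$ with a topology other than one or two $2$-balls, and in particular that the nodes \circled{$a^1$} and \circled{$b^1$} — the only nodes whose $U(R)$ is a sphere or a ribbon — are never re-encountered as the endpoint of a walk of length $\ge 2$ issuing from $\overline{\nS}$. Since $(N_k)$ is a nested sequence and the labels carry their level of first appearance in the superscript, it suffices to check that $\overline{\nS}$, \circled{$a^1$}, and \circled{$b^1$} have no incoming edges from nodes other than the root's length-$0$ and length-$1$ neighbors; equivalently, that these three special nodes are not successors (under the edge relation \eqref{eq:Nedge}) of any node. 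This is a finite check against the graphs in Figures~\ref{Fig:B=2} and~\ref{Fig:B>2}, and once it is confirmed the lemma follows immediately from the tabulated topology of $U(R)$.
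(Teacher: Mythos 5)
Your argument is correct and follows essentially the same route as the paper: both reduce the claim via Proposition~\ref{prop:capGraph}~(1) to the observation that the only nodes of $\nI$ whose $U(R)$ is not a union of at most two disjoint $2$-balls are $\overline{\nS}$ (a sphere) and \circled{$b^1$} (a ribbon), and then check from Figures~\ref{Fig:B=2} and~\ref{Fig:B>2} that neither is the endpoint of a walk of length $i\ge 2$ issuing from $\overline{\nS}$. The only slip is cosmetic: you describe \circled{$a^1$} and \circled{$b^1$} as ``the only nodes whose $U(R)$ is a sphere or a ribbon,'' whereas \circled{$a^1$} is a $2$-ball and the sphere is the root $\overline{\nS}$ itself --- but since your final reachability check includes all three of $\overline{\nS}$, \circled{$a^1$}, and \circled{$b^1$}, the argument is unaffected.
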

\begin{proof}
Suppose that $\partial g_\infty \cap \partial g \neq\emptyset$. Then by  Proposition~\ref{prop:capGraph}~(1) the intersection $\partial g_\infty \cap \partial g$ is homeomorphic to $U(R)$, where $R\subseteq \nS$ is a representative of a node $\overline{R}$ of $\nI$. We can now read off Figures~\ref{Fig:B=2} and~\ref{Fig:B>2} that in this case $U(R)$ is either a union of at most $2$ disjoint $2$-balls, or homeomorphic to $\mathbb{S}^1\times[0,1]$ (a ``ribbon''), or homeomorphic to $\mathbb{S}^2$. We need to exclude the last two cases. Suppose that $\partial g_\infty \cap \partial g$ is homeomorphic to $\mathbb{S}^1\times[0,1]$ or $\mathbb{S}^2$. Then, because $g\in \nP_i\setminus\{g_\infty\}$ we have $\level{g}=i$ and, according to Proposition~\ref{prop:capGraph}~(1)  there is a walk $\overline{\nS} \rightarrow \overline{R_1}\rightarrow \cdots \rightarrow \overline{R_i}$ in $\nI$ with $U(R_i)$ being homeomorphic to $\mathbb{S}^1\times[0,1]$ or $\mathbb{S}^2$. We know that $\overline{\nS}$ and \circled{$b^1$} are the only nodes of $\nI$ homeomorphic to $\mathbb{S}^1\times[0,1]$ or $\mathbb{S}^2$. However, as we see from Figure~\ref{Fig:B=2} and Figure~\ref{Fig:B>2}, there is no walk of length $i\ge 2$ in $\nI$ ending at \circled{$b^1$} or $\nS$. Thus $\partial g_\infty \cap \partial g$ can neither be homeomorphic to $\mathbb{S}^1\times[0,1]$ nor to~$\mathbb{S}^2$.
\end{proof}

In view of Lemma~\ref{lem:division} we can subdivide the atoms of $\nP_i\setminus\{g_\infty\}$, $i\ge 2$, according to the way they intersect $\partial g_\infty = \partial T$. In particular, for $i\ge 2$ set 
\begin{equation}\label{eq:p1prime}
\begin{aligned}
\nP_{i1}&=\{g\in \nP_i\setminus\{g_\infty\} \;:\; \partial g\cap \partial T=\emptyset \},\\
\nP_{i2}&=\{g\in \nP_i\setminus\{g_\infty\} \;:\; \partial g\cap \partial T \text{ is  a single $2$-ball} \},\\
\nP_{i3}&=\{g\in \nP_i\setminus\{g_\infty\} \;:\; \partial g\cap \partial T \text{ is the union of $2$ disjoint $2$-balls} \}.
\end{aligned}
\end{equation}
 Then we have $\nP_i=\nP_{i1}\cup \nP_{i2}\cup \nP_{i3}\cup \{g_{\infty}\}$. We need partitionings whose atoms have intersections with $\partial T$ that are either empty or a $2$-ball. To achieve this we further subdivide the atoms of $\nP_{i3}$ and put, again for $i\ge 2$,
 \begin{equation*}
 \begin{aligned}
\nQ_{i1}=&\nP_{i1},\\
\nQ_{i2}=&\nP_{i2},\\
\nQ_{i3}=&\{g\in \nP_{i+1}\;:\; g\subset g' \text{ for } g'\in \nP_{i3} \}.
\end{aligned}
\end{equation*}
Let $(\nQ'_i)_{i\ge 1}$ be given by
\begin{equation}\label{new_par}
\begin{split}
\nQ'_1&=\{T^\circ\},\\
\nQ'_i&=\nQ_{i1}\cup \nQ_{i2}\cup \nQ_{i3} \qquad(i\ge 2),
\end{split}
\end{equation}
and set $\nQ_i= \nQ'_i\cup\{g_\infty\}$ for $i\ge 1$. From this definition we immediately get
\begin{equation}\label{eq:QiLevel}
\level{\nQ'_1}=0 \quad \hbox{and} \quad \level{\nQ'_i} = \{i,i+1\} \hbox{ for } i\ge 2.
\end{equation}

\begin{lem}\label{lem:regPprim}
Let $T$ be an $ABC$-tile with $14$ neighbors and assume that  $A=1$. The sequence  $(\nQ_i')_{i\ge 1}$ given by \eqref{new_par} is a decreasing sequence of regular partitionings of $T$.
\end{lem}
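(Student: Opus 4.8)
The plan is to verify the three defining properties of a decreasing sequence of regular partitionings: (i) each $\nQ_i'$ is a regular partitioning of $T$; (ii) $\nQ_{i+1}'$ is a refinement of $\nQ_i'$; and (iii) the maximal diameter of the atoms of $\nQ_i'$ tends to $0$. The instances for $i=1$ are immediate: $\nQ_1'=\{T^\circ\}$ is trivially a regular partitioning of $T$, and since every atom of $\nQ_2'$ is a subtile interior contained in $T^\circ$, it is contained in the single atom of $\nQ_1'$. Hence I focus on $i\ge 2$, where by construction each atom of $\nQ_i'$ is the interior of a subtile: those in $\nQ_{i1}\cup\nQ_{i2}=\nP_{i1}\cup\nP_{i2}$ have level $i$, while those in $\nQ_{i3}$ have level $i+1$ (recall \eqref{eq:QiLevel}).

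For (i), I first note that Lemma~\ref{lem:division} guarantees that the three sets $\nP_{i1},\nP_{i2},\nP_{i3}$ defined in \eqref{eq:p1prime} exhaust $\nP_i\setminus\{g_\infty\}$, so that every level-$i$ subtile interior either survives as an atom or is replaced in $\nQ_{i3}$ by the interiors of its level-$(i+1)$ children. Each atom, being the interior of a subtile $\sub$ (an affine image of $T$), is open and, by Lemma~\ref{lem:LW1}, satisfies $\sub=\overline{\sub^\circ}$, whence $\sub^\circ=(\overline{\sub^\circ})^\circ$ is the interior of its closure; thus every atom is regular. Disjointness follows from \eqref{eq:inteq}: distinct subtiles of a common level have disjoint interiors, while an atom of $\nQ_{i3}$ lies in an atom of $\nP_{i3}$ and is therefore disjoint from the level-$i$ atoms of $\nP_{i1}\cup\nP_{i2}$. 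For density, the union of the interiors of the level-$(i+1)$ children of a subtile $\overline{g'}$ with $g'\in\nP_{i3}$ is dense in $\overline{g'}$ (a finite essentially disjoint union whose closure is $\overline{g'}$), so replacing the atoms of $\nP_{i3}$ by their children preserves the density in $T$ already provided by Lemma~\ref{lem:PiPart}.

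Property (ii) is the point that requires the most care, since the atoms of $\nQ_i'$ mix levels $i$ and $i+1$ while those of $\nQ_{i+1}'$ mix levels $i+1$ and $i+2$, and the bookkeeping of which subtiles get subdivided must be carried out consistently across two consecutive partitionings. Given an atom $h\in\nQ_{i+1}'$ with $\level{h}\in\{i+1,i+2\}$, let $\sub'$ be its unique level-$i$ ancestor subtile and $g'=(\sub')^\circ$, so that $h\subseteq g'$. If $g'\in\nP_{i1}\cup\nP_{i2}$, then $g'$ is itself an atom of $\nQ_i'$ and $h\subseteq g'$ is what we need. If $g'\in\nP_{i3}$, let $\sub''$ be the level-$(i+1)$ subtile with $h\subseteq(\sub'')^\circ\subseteq g'$ (namely $\sub''=\overline{h}$ if $\level{h}=i+1$, and the level-$(i+1)$ ancestor of $\overline{h}$ inside $\sub'$ otherwise); then $(\sub'')^\circ$ is a level-$(i+1)$ child of $\sub'$, hence an atom of $\nQ_{i3}\subseteq\nQ_i'$ containing $h$. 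In all cases $h$ lies in an atom of $\nQ_i'$, so $\nQ_{i+1}'$ refines $\nQ_i'$.

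Finally, for (iii), every atom of $\nQ_i'$ is the interior of a subtile of level $i$ or $i+1$, hence is contained in a set of the form $M^{-i}(T+z)$ or $M^{-i-1}(T+z)$. Since $M$ is expanding, the spectral radius of $M^{-1}$ is strictly smaller than $1$, so $\Vert M^{-i}\Vert\to 0$, and therefore $\operatorname{diam}(M^{-i}(T+z))\le \Vert M^{-i}\Vert\,\operatorname{diam}(T)\to 0$ as $i\to\infty$. Thus the maximal diameter of the atoms of $\nQ_i'$ tends to $0$, which completes the verification that $(\nQ_i')_{i\ge 1}$ is a decreasing sequence of regular partitionings of $T$. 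I expect the only genuinely delicate step to be the level bookkeeping in (ii); everything else is a routine consequence of Lemmas~\ref{lem:LW1}, \ref{lem:PiPart}, and~\ref{lem:division} together with the expansiveness of $M$.
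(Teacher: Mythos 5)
Your proof is correct and follows essentially the same route as the paper's: regularity of the atoms via Lemma~\ref{lem:LW1} and the set equation, refinement by tracking which level-$i$ or level-$(i+1)$ subtile an atom of $\nQ_{i+1}'$ sits in, and decreasing diameters from the expansiveness of $M$. The only cosmetic difference is that the paper factors the refinement step through the intermediate partitioning $\nP_{i+1}\setminus\{g_\infty\}$ and organizes the regularity argument via the sub-collections $X_g$ of atoms inside each $g\in\nP_i\setminus\{g_\infty\}$, whereas you do the same bookkeeping directly.
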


\begin{proof}
For $i=1$ the collection $\nQ_1'$ is clearly a regular partitioning of $T$. Let now $i\ge 2$.
Let $g\in \nP_i\setminus\{g_\infty\}$ be given. Then $g=M^{-i}(T+d)^\circ$ for some $d\in \D_i$. We claim that $X_g=\{h\in \nQ_i' \,:\, h\cap g \not=\emptyset\}$ is a regular partitioning of the Peano continuum $\overline{g}$. If $g\in \nP_{i1} \cup \nP_{i2}$ then
$X_g=\{g\}$ and the claim is trivial. If $g\in \nP_{i3}$ then  $X_g = \{M^{-i-1}(T+d_i+Md)^\circ\,:\, d_i\in \D\}$ and the claim follows from Lemma~\ref{lem:LW1} and \eqref{eq:intersectDi} because $\overline{g}=\bigcup_{d_i\in \D}M^{-i-1}(T+d_i+Md)$ by the set equation \eqref{eq:setequation}.  This proves the claim in all cases. Since
\[
\nQ_i'= \bigcup_{g\in \nP_i \setminus \{g_\infty\}} X_g,
\]
$X_g$ is a regular partitioning of $\overline{g}$ for each $g\in \nP_i\setminus \{g_\infty\}$, and $\nP_i\setminus \{g_\infty\}$ is a regular partitioning of $T$ by Lemma~\ref{lem:PiPart}, we conclude that $\nQ_i'$ is a regular partitioning of $T$ as well.

Because $M^{-1}$ is a uniform contraction, $\max\{\mathrm{diam}\,g \,:\, g\in \nP_i\setminus\{g_\infty\}\} = \mathrm{diam}\, M^{-i}T \to 0$ for $i\to \infty$.
The fact that $(\nQ_i')_{i\ge 1}$ is decreasing now follows because by \eqref{eq:QiLevel}, $\nQ_{2}'$ is a refinement of $\nQ_{1}'$ and, for each $i\ge 2$, $\nQ'_{i+1}$ is a refinement of $\nP_{i+1}\setminus\{g_\infty\}$ and $\nP_{i+1}\setminus\{g_\infty\}$ is a refinement of~$\nQ'_{i}$.
\end{proof}

Let $g\in \bigcup_{i\ge 0}\nP_i \setminus \{g_\infty\}$ be given. Then there is $k\ge 0$ and $d\in \D_k$ such that $g=M^{-k}(T^\circ+d)$. In this case we associate with $g$ the mapping $[g]:\R^3 \to \R^3$, $x \mapsto M^{-k}(x+d)$. If $H$ is a collection of sets, then we set $[g](H) = \{[g](h) \,:\, h\in H \}$. Clearly, if $H$ is a partitioning of a Peano continuum $X$, then $[g](H)$ is a partitioning of $[g](X)$. 
We need the following generalization of $(\nQ'_i)$ and $(\nQ_i)$. Let $\mathbf{n}=(n_j)_{j\ge 1}$ be a sequence with $n_j\in \N\cup\{\infty\}$ satisfying $n_1\ge 3$ and $n_{j+1}-n_j \ge 3$ (we allow that $\mathbf{n}$ can become ultimately $\infty$, {\it i.e.}, for each $n\in\N$ we define $n< \infty$ and $\infty + n \le  \infty$). We define the sequence of partitionings $(\nQ'_i(\mathbf{n}))_{i\ge 1}$ by \\[-2mm]
\begin{equation}\label{eq:nPn}
\begin{array}{rcll}
\displaystyle \nQ'_i(\mathbf{n}) &=& \displaystyle\nQ'_i & (1\le i < n_1), \\[2mm]
\displaystyle\nQ'_{n_j}(\mathbf{n}) &=& \displaystyle \nQ'_{n_j-1}(\mathbf{n}) &(j\ge 1), \\[2mm]
\displaystyle\nQ'_i(\mathbf{n}) &=& \displaystyle \bigcup_{g\in \nQ'_{n_j}(\mathbf{n})} [g](\nQ'_{i-\level{g}}) \quad&  (n_j< i < n_{j+1},\; j \ge 1). 
\end{array}
\end{equation}
Moreover, set $\nQ_i(\mathbf{n}) = \nQ'_i(\mathbf{n}) \cup\{g_\infty\}$ for $i\ge 1$. Note that $(\nQ_i)_{i\ge 1}=(\nQ_i(\mathbf{n}))_{i \ge 1}$ if $\mathbf{n}=(n_j)_{j\ge 1}$ satisfies $n_j=\infty$ for each $j\ge 1$. 

\begin{remark}
The definition of $(\nQ'_i(\mathbf{n}))$ is a bit technical. Its main feature is a repetitivity property. After $n_j$ steps each atom of $\nQ'_{n_j}(\mathbf{n})$ is subdivided in the same way as $T$ itself ({\it i.e.}\ by using partitionings equivalent to $\nQ_i'$) for $n_{j+1}-n_{j}-1$ steps. Sloppily speaking, in $\nQ'_{n_j}(\mathbf{n})$ each atom is subdivided by the ``nice'' subdivision equivalent to~$\nQ_i'$ for some time. This repetitivity, which is not present in $(\nQ_i')$, will be of importance later.
\end{remark}

The next result contains basic properties of the sequence of partitionings $(\nQ_i(\mathbf{n}))_{i\ge 1}$.

\begin{lem}\label{lem:nBasic}
Let $T$ be an $ABC$-tile with $14$ neighbors and assume that  $A=1$. Let $\mathbf{n}=(n_j)_{j\ge 1}$ be a sequence with $n_j\in \N\cup\{\infty\}$ satisfying $n_1\ge 3$ and  $n_{j+1}-n_j\ge 3$ and let $(\nQ'_i(\mathbf{n}))_{i\ge 1}$ be as in~\eqref{eq:nPn}.
Then 
\begin{itemize}
\item[(i)] $g\in \nQ'_i(\mathbf{n})$ implies $\level{g}\in\{i-1,i,i+1\}\setminus\{1\}$ ($i\ge 1$).
\item[(ii)] $\nQ'_i(\mathbf{n})$ is a regular partitioning of $T$ ($i\ge 1$).
\item[(iii)] $(\nQ'_i(\mathbf{n}))_{i\ge 1}$ is a decreasing sequence of partitionings of $T$.
\end{itemize}
\end{lem}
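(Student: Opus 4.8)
The plan is to exploit the block structure of $\mathbf{n}$ and run an induction powered by a single elementary observation about the affine maps $[g]$. Recall that each $g\in\nP_i\setminus\{g_\infty\}$ equals $M^{-k}(T^\circ+d)$ with $k=\level{g}$, and that $[g]\colon x\mapsto M^{-k}(x+d)$ is an affine homeomorphism with $[g](T)=\overline{g}$ (using Lemma~\ref{lem:LW1}). From $[g](M^{-m}(T^\circ+e))=M^{-(k+m)}(T^\circ+e+M^m d)$ one reads off the level additivity
\[
\level{[g](h)}=\level{g}+\level{h}\qquad\text{for every atom }h,
\]
and one checks at once that, if $H$ is a (regular) partitioning of $T$, then $[g](H)$ is a (regular) partitioning of $\overline{g}$, that refinement is preserved by $[g]$, and that every atom $[g](h)$, being open and contained in $\overline{g}$, actually lies in the regular open set $g$. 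These facts, together with \eqref{eq:QiLevel} and Lemma~\ref{lem:regPprim}, are all that is needed.

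For (i), I would prove by induction on the block index $j$ a position-dependent refinement of the stated bound, since the stated bound alone does not close the induction. Concretely, inside a block the levels of $\nQ'_i(\mathbf{n})$ are exactly $\{i,i+1\}$ for $n_j+2\le i<n_{j+1}$ (and for $2\le i<n_1$), exactly $\{i-1,i,i+1\}$ at $i=n_j+1$, and exactly $\{i-1,i\}$ at the copy step $i=n_j$ (degenerating to $\{0\}$ at $i=1$). Writing $P=\nQ'_{n_j}(\mathbf{n})$, whose atoms have levels in $\{n_j-1,n_j\}$ by the inductive hypothesis, level additivity and \eqref{eq:QiLevel} give that a $g\in P$ with $\level{g}=n_j-1$ contributes $[g](\nQ'_{i-n_j+1})$ with levels $\{i,i+1\}$, while $\level{g}=n_j$ contributes $[g](\nQ'_{i-n_j})$ with levels $\{i,i+1\}$ when $i\ge n_j+2$, collapsing to the single value $\{i-1\}$ at $i=n_j+1$ because $\nQ'_1=\{T^\circ\}$ and hence $[g](\nQ'_1)=\{g\}$. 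The conditions $n_1\ge 3$ and $n_{j+1}-n_j\ge 3$ enter exactly here: they force every block to end at an index $n_{j+1}-1\ge n_j+2$ with level pattern $\{i,i+1\}$, so that the following copy step reproduces the block-start pattern $\{i-1,i\}$, and they keep every occurring level $\ge 2$, so that the value $1$ never arises. The bound $\level{g}\in\{i-1,i,i+1\}\setminus\{1\}$ is an immediate corollary.

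For (ii), I would repeat the argument of Lemma~\ref{lem:regPprim} with $\nQ'_i(\mathbf{n})$ in place of $\nQ'_i$, by induction on $i$; the copy steps are trivial. In a generic step, $P=\nQ'_{n_j}(\mathbf{n})$ is a regular partitioning of $T$ by hypothesis, and for each $g\in P$ the collection $[g](\nQ'_{i-\level{g}})$ is a regular partitioning of $\overline{g}$ by the building-block observation applied to the regular partitioning $\nQ'_{i-\level{g}}$ of $T$ (here $i-\level{g}\ge 1$ by the level bound, so the subscript is admissible). Since each such atom lies in $g$, atoms coming from distinct $g,g'\in P$ are disjoint, and the total union is dense in $\bigcup_{g\in P}\overline{g}\supseteq\bigcup_{g\in P}g$, which is dense in $T$; hence $\nQ'_i(\mathbf{n})=\bigcup_{g\in P}[g](\nQ'_{i-\level{g}})$ is a regular partitioning of $T$.

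Finally, for (iii) the diameter condition is immediate from (i): every atom has the form $M^{-k}(T+d)$ with $k=\level{g}\ge i-1$, and $\mathrm{diam}\,M^{-k}T\to 0$ as $k\to\infty$ because $M^{-1}$ is a uniform contraction. For refinement I would check the four types of transition: within the initial block it is Lemma~\ref{lem:regPprim}; the copy steps $\nQ'_{n_j}(\mathbf{n})=\nQ'_{n_j-1}(\mathbf{n})$ are equalities; for a generic-to-generic step $n_j+1\le i\le n_{j+1}-2$ one compares $\bigcup_{g\in P}[g](\nQ'_{i-\level{g}})$ with $\bigcup_{g\in P}[g](\nQ'_{i+1-\level{g}})$ block by block, using that $\nQ'_{i+1-\level{g}}$ refines $\nQ'_{i-\level{g}}$ (Lemma~\ref{lem:regPprim}), that $[g]$ preserves refinement, and that the pieces belonging to different $g$ stay separated inside the disjoint sets $g$; and at the transition $i=n_j\to n_j+1$ the level-$n_j$ atoms are carried over unsubdivided (since $\nQ'_1=\{T^\circ\}$) while the level-$(n_j-1)$ atoms are properly subdivided, which is again a refinement. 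The one genuinely delicate point, and the step I would be most careful about, is the level bookkeeping of (i): the induction only closes with the strengthened position-dependent hypothesis, and the gap conditions on $\mathbf{n}$ are precisely what prevent the minimum level from dropping to $1$ and make the block-start pattern self-reproducing.
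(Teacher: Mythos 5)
Your proof is correct and follows essentially the same route as the paper: the paper likewise closes the induction for (i) by proving the strengthened position-dependent statement (levels $\{i,i+1\}$ inside a block, $\{i-1,i\}$ at a copy step $i=n_j$, $\{i-1,i,i+1\}$ at $i=n_j+1$), derives (ii) by applying Lemma~\ref{lem:regPprim} to each $[g](\nQ'_{i-\level{g}})$ and assembling over the regular partitioning $\nQ'_{n_j}(\mathbf{n})$, and gets (iii) from the same case analysis of transitions plus the uniform contraction of $M^{-1}$. Your explicit level-additivity observation is only a presentational difference.
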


\begin{proof}
To prove (i) we first prove the following more detailed result (set $n_0=0$ for convenience).
\begin{itemize}
\item[(a)] If $n_{j-1}+1<i<n_{j}$ then $g\in \nQ'_i(\mathbf{n})$ implies $\level{g}\in\{i,i+1\}$ ($j\ge 1$).
\item[(b)] If $i=n_{j}$ then $g\in \nQ'_i(\mathbf{n})$ implies $\level{g}\in\{i-1,i\}$ ($j\ge 1$).
\item[(c)] If $i=n_{j}+1$ then $g\in \nQ'_i(\mathbf{n})$ implies $\level{g}\in\{i-1,i,i+1\}$ ($j\ge 0$).
\end{itemize}
This is proved by induction on $j$. For $1\le i\le n_1$ we have $\nQ'_i(\mathbf{n})=\nQ'_{\min\{i,n_1-1\}}$ and the result follows from \eqref{eq:QiLevel}. Suppose that the result holds for $i\le n_{j}$. If $n_j < i < n_{j+1}$ then
\begin{equation}\label{Qprimeexpl}
\nQ'_i(\mathbf{n}) =  \bigcup_{g\in \nQ'_{n_j}(\mathbf{n})} [g](\nQ'_{i-\level{g}})
\end{equation}
Let $g'\in \nQ'_i(\mathbf{n})$.
Assume first that $i=n_j+1$. Then, because $\level{\nQ'_{n_j}(\mathbf{n})} = \{n_j-1,n_j\}$  this implies that either $g' \in [g](\nQ'_2)$ for some $g$ with $\level{g} = n_j-1$, hence, by \eqref{eq:QiLevel}, $\level{g'}\in\{n_j+1,n_j+2\}$, or $g' \in [g](\nQ'_1)$ for some $g$ with $\level{g} = n_j$, hence, by \eqref{eq:QiLevel}, $\level{g'}=n_j$ which is~(c). 
If $n_{j}+1<i<n_{j+1}$ then, because $\level{\nQ'_{n_j}(\mathbf{n})} = \{n_j-1,n_j\}$  this implies that 
either $g' \in [g](\nQ'_{i-n_j+1})$ for some $g\in \nQ'_{n_j}(\mathbf{n})$ with $\level{g} = n_j-1$, hence, by \eqref{eq:QiLevel}, $\level{g'}\in\{i,i+1\}$, or $g' \in [g](\nQ'_{i-n_j})$ for some $g\in\nQ'_{n_j}(\mathbf{n})$ with $\level{g} = n_j$, hence, by \eqref{eq:QiLevel}, $\level{g'}\in\{i,i+1\}$ which is~(a). 
If $i=n_{j+1}$, (b)~follows immediately from~(a). This finishes the induction proof of (a), (b), and~(c).

Finally let $g\in\nQ'_i(\mathbf{n})$ for some $i\ge 1$.
Now (i) follows from (a), (b), and~(c) because for $i=1$ we have $\level{g}=0$, for $i= 2$ we have $\level{g}\in\{2,3\}$ (since $n_1 \ge 3$), and for $i\ge 3$ we have $\level{g} \ge 2$. Thus $\level{g}=1$ cannot occur for any $g\in \nQ'_i(\mathbf{n})$, $i\ge 1$.

To prove (ii) we use again induction on $j$. For $1\le i \le n_1$ the collection $\nQ_1'(\mathbf{n})$ is a regular partitioning of $T$ by Lemma~\ref {lem:regPprim}. Let now $j\ge 2$. Since $\nQ'_{n_{j+1}}(\mathbf{n}) = \nQ'_{n_{j+1}-1}(\mathbf{n})$ we may assume that $n_j < i < n_{j+1}$. In this case $[g](\nQ'_{i-\level{g}})$ is a regular partitioning of $\overline{g}$ for each $g\in \nQ'_{n_j}(\mathbf{n})$ by Lemma~\ref{lem:regPprim}, and $\nQ'_{n_j}(\mathbf{n})$ is a regular partitioning of $T$ by the induction hypothesis. Thus by \eqref{Qprimeexpl} also $\nQ'_i(\mathbf{n})$ is a regular partitioning of $T$, and the induction is finished. This proves~(ii).

For (iii) we first show that $\nQ'_{i+1}(\mathbf{n})$ is a refinement of $\nQ'_{i}(\mathbf{n})$. 
For $n_j +1 < i < n_{j+1}-1$ this follows from (a). For $i=n_{j}-1$ it follows because $\nQ'_{n_j-1}(\mathbf{n}) =\nQ'_{n_j}(\mathbf{n})$, for $i=n_j$ it follows form (b) and (c), and for $i=n_{j}+1$ it follows from (c) and (a). The fact that $(\nQ'_i(\mathbf{n}))_{i\ge1}$ is decreasing follows from (i) because $M^{-1}$ a uniform contraction. 
\end{proof}

The following result contains some topological properties of $(\nQ_i(\mathbf{n}))_{i\ge 1}$ that are related to some of the conditions of Propositions~\ref{prop:bing3} and~\ref{prop:bing5}.

\begin{prop}\label{lem:topologyP}
Let $T$ be an $ABC$-tile with $14$ neighbors and assume that  $A=1$. Let $\mathbf{n}=(n_j)_{j\ge 1}$ be a sequence with $n_j\in \N\cup\{\infty\}$ satisfying $n_1\ge 3$ and  $n_{j+1}-n_j \ge 3$.
Then the following conditions hold for $i\ge 2$.
\begin{itemize}
\item[(1)]  For each $g\in \nQ_i(\mathbf{n})$ we have  $\partial g \cong \mathbb{S}^2$.
\item[(2)] If $g_1,g_2\in \nQ_i(\mathbf{n})$ are distinct then $\partial g_1 \cap \partial g_2 $ is either empty or a union of at most $2$ disjoint $2$-balls. 
\item[(3)] If  $g_1,g_2 \in \nQ_i$ are distinct then $\partial g_1 \cap \partial g_2$ is either empty or a single $2$-ball.\footnote{We need this item only for $g_1=g_\infty$ but give the more general case for the sake of completeness.}
\item[(4)] If $g_1,g_2,g_3\in \nQ_i(\mathbf{n})$ are distinct then $\partial g_1 \cap \partial g_2 \cap \partial g_3 $ is either empty or a finite union of arcs.
\end{itemize}
\end{prop}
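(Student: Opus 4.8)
The plan is to read off all four items from the combinatorial data already assembled, using that by Lemma~\ref{lem:nBasic}(i) every atom $g\neq g_\infty$ of $\nQ_i(\mathbf{n})$ (with $i\ge 2$) is the interior of a subtile with $\level{g}\in\{i-1,i,i+1\}\setminus\{1\}$, so in particular $\level{g}\ge 2$, and any two atoms differ in level by at most $2$ (by at most $1$ in the special partitioning $\nQ_i$, by~\eqref{eq:QiLevel}). Item~(1) is then immediate: for $g=g_\infty$ we have $\partial_{\mathbb{S}^3}g_\infty=\partial T\cong\mathbb{S}^2$ by the boundary theorem of~\cite{TZ:19}, and for $g\neq g_\infty$ the closure $\overline g=[g](T)$ is an affine image of $T$, so $\partial g=[g](\partial T)\cong\mathbb{S}^2$ because $[g]$ is a homeomorphism of $\R^3$.

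For items~(2) and~(3) I would separate intersections with $g_\infty$ from intersections of two subtiles. If one atom is $g_\infty$ and the other is a subtile $g$ with $\level{g}\ge 2$, then $g\in\nP_{\level g}\setminus\{g_\infty\}$ and Lemma~\ref{lem:division} directly gives that $\partial g_\infty\cap\partial g$ is empty or a union of at most two disjoint $2$-balls, which is item~(2). For item~(3), where the partitioning is $\nQ_i$, the atoms of $\nQ_{i1}=\nP_{i1}$ and $\nQ_{i2}=\nP_{i2}$ meet $\partial T$ in the empty set, respectively in a single $2$-ball, by definition; for $g\in\nQ_{i3}$, a level-$(i+1)$ child of some $g'\in\nP_{i3}$ whose type is a ``two disjoint $2$-balls'' node of $\nI$, Lemma~\ref{lem:auxt1t2} shows that the type of $\partial g_\infty\cap\partial g$ is a successor in $\nI$ of that node, and since $\level g=i+1\ge 3$ the walk-length argument from the proof of Lemma~\ref{lem:division} excludes the nodes $\overline{\nS}$ and \circled{$b^1$}. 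The remaining task is to verify from Figures~\ref{Fig:B=2} and~\ref{Fig:B>2} that every successor of a ``two disjoint $2$-balls'' node is $\overline{\emptyset}$ or a single $2$-ball.

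For two subtiles $g_1,g_2\neq g_\infty$ I would use~\eqref{eq:inteq} to identify $\partial g_1\cap\partial g_2=\overline{g_1}\cap\overline{g_2}$ and then apply Proposition~\ref{prop:capGraph}(2), which gives the type of this intersection as the endpoint of a walk of length $\ell=|\level{g_1}-\level{g_2}|\le 2$ in $\nI$ starting at a singleton $\overline{\{\alpha\}}$, together with Lemma~\ref{lem:typetopo} to conclude $\partial g_1\cap\partial g_2\cong U(R)$. The tables in the construction of $\nI$ show that every node other than \circled{$b^1$} (a ribbon) and $\overline{\nS}$ (a $2$-sphere) has $U(R)$ equal to $\overline{\emptyset}$, a single $2$-ball, or two disjoint $2$-balls; since no walk of length $\ge 2$ ends at \circled{$b^1$} or $\overline{\nS}$, and a singleton has only small (hence $\neq$\,\circled{$b^1$}$,\overline{\nS}$) successors, item~(2) follows. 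In $\nQ_i$ the level difference is $0$ or $1$: difference $0$ yields the type $\overline{\{\alpha\}}$, a single $2$-ball, and difference $1$ yields a successor of a singleton, which again I would verify on the figures to be $\overline{\emptyset}$ or a single $2$-ball, giving item~(3). For item~(4) I would pass to the common level $\ell=\max_k\level{g_k}$, subdividing each $g_k$ into essentially disjoint level-$\ell$ subtiles $M^{-\ell}(T^\circ+\,\cdot\,)$; compactness of $\overline{g_2}\cap\overline{g_3}$ leaves only finitely many triples of pieces, with mutually distinct translation vectors $x,y,z$, whose boundaries meet, and
\[
\partial g_1\cap\partial g_2\cap\partial g_3 = \bigcup_{(x,y,z)} M^{-\ell}\big(\B_{\{x-z,\,y-z\}}+z\big).
\]
By Proposition~\ref{lem:TZ:19}(2) each $\B_{\{x-z,\,y-z\}}$ is an arc or empty, so the triple intersection is a finite union of arcs.

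I expect the genuine obstacle to lie in the last step of items~(2) and~(3): checking on the explicit graph $\nI$ that the successors of the ``two disjoint $2$-balls'' nodes, and of the singleton nodes, never produce a ribbon, a $2$-sphere, or a second ``two disjoint $2$-balls'' set. This is exactly what guarantees that subdividing the $\nP_{i3}$-atoms once already yields the single-$2$-ball intersections defining $\nQ_{i3}$, and although it is a finite inspection it is the step that truly relies on the detailed combinatorics of $\nI$ rather than on soft topological reasoning.
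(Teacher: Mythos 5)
Your proposal is correct and follows essentially the same route as the paper: item (1) via affine images of $\partial T$, items (2)–(3) by splitting off the $g_\infty$ case (Lemma~\ref{lem:division}, the definition of $\nQ_{i1},\nQ_{i2},\nQ_{i3}$, and Lemma~\ref{lem:auxt1t2} with the successor inspection in $\nI$) from the two-subtile case (Proposition~\ref{prop:capGraph}(2) with walks of length at most two, resp.\ at most one, from singleton nodes), and item (4) by subdividing to a common level and invoking Proposition~\ref{lem:TZ:19}(2). The only differences are cosmetic (the paper subdivides to level $i+1$ rather than $\max_k\level{g_k}$ in item (4), and claims the slightly stronger fact that all nodes within distance two of a singleton are single $2$-balls), and you correctly isolate the finite graph inspection as the step carrying the real content.
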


\begin{proof}
Throughout the proof we assume that $i\ge 2$.

Each $g\in\nQ_i(\mathbf{n})$ either satisfies $g=\mathbb{S}^3\setminus T$ or $g=M^{-j}(T+z)^{\circ}$ for some $j \in \{i-1,i,i+1\}\setminus\{1\}$ and some $z\in \Z^3$ by Lemma~\ref{lem:nBasic}~(i). In any case $\partial g $ is homeomorphic to $\partial T$ and, hence, item~(1) follows from \cite[Theorem~1.1~(1)]{TZ:19}.  

If $g_1,g_2 \in \nQ_i'(\mathbf{n})$ then, after possibly exchanging $g_1$ and $g_2$, Lemma~\ref{lem:nBasic}~(i) implies that there are $k,l \in\{i-1,i,i+1\}\setminus\{1\}$ such that $k\le l$, $\level{g_1}=k$ and $\level{g_2}=l$. 
Assume that $\partial g_1 \cap \partial g_2\not=\emptyset$.
Thus Proposition~\ref{prop:capGraph}~(2) implies that the intersection $\partial g_1 \cap \partial g_2$ is homeomorphic to $U(R)$ for a node $\overline{R}$ of $\nI$ which can be reached from one of the nodes $\overline{\{\alpha\}}$, $\alpha\in\nS$, by a walk of length zero, one, or two. 
Since we see from Figures~\ref{Fig:B=2} and~\ref{Fig:B>2} (recall Remark~\ref{eq:circalpha}) that all these nodes correspond to a %union of at most $2$ disjoint 
$2$-ball, item~(2)  follows for this case. It remains to show item (2) for the case $g_1=g_\infty$.  Because $i\ge 2$ we have $\level{g_2}\ge 2$ by Lemma~\ref{lem:nBasic}~(i) and (2) follows from Lemma~\ref{lem:division}
%Assume that $\partial g_\infty \cap \partial g_2\not=\emptyset$. Because $i\ge 2$, $\level{g_2}\ge 2$ by Lemma~\ref{lem:nBasic}~(i).  Thus $\partial g_\infty \cap \partial g_2$ is isomorphic to $U(R)$ for a node $\overline{R}$ that is the endpoint of a walk of length at least $2$ in $\nI$ starting at $\overline{\nS}$ by Proposition~\ref{prop:capGraph}~(1). Again we see from Figures~\ref{Fig:B=2} and~\ref{Fig:B>2} that all these nodes correspond to a union of at most $2$ disjoint $2$-balls.

To prove~(3) let first $g_1,g_2 \in \nQ_i'$ then, after possibly exchanging $g_1$ and $g_2$, by the definition of  $\nQ_i'$ there are $k,l \in\{i,i+1\}$ such that $k\le l$, $\level{g_1}=k$ and $\level{g_2}=l$. Assume that $\partial g_1 \cap \partial g_2\not=\emptyset$. Then Proposition~\ref{prop:capGraph}~(2) implies that the intersection $\partial g_1 \cap \partial g_2$ is homeomorphic to $U(R)$ for a node $\overline{R}$ of $\nI$ which can be reached from one of the nodes $\overline{\{\alpha\}}$, $\alpha\in\nS$, by a walk of length zero or one. Since we see from Figures~\ref{Fig:B=2} and~\ref{Fig:B>2} (recall again Remark~\ref{eq:circalpha}) that all these nodes correspond to a $2$-ball, item~(2)  follows for this case. Let now $g_1= g_\infty$ and assume that $\partial g_\infty \cap \partial g_2\not=\emptyset$. If $g_2 \in \nQ_{i1} \cup \nQ_{i2}$ then by the definition of $\nQ_{i1}$ and $\nQ_{i2}$,  $\partial g_\infty \cap \partial g_2$ is clearly a $2$-ball. If $g_2 \in \nQ_{i3}$ then by the definition of $\nQ_{i3}$, 
there is $g_2' \in \nP_{i3}$ with $\level{g_2}=\level{g_2'}+1$ and $g_2\subset g_2'$ such that $\partial g_\infty \cap \partial g_2'$ is a union of $2$ disjoint $2$-balls. By Proposition~\ref{prop:capGraph}~(1) the intersection $\partial g_\infty \cap \partial g_2'$ is of type $\overline{R'}$ where $\overline{R'} \in \nI$. Lemma~\ref{lem:auxt1t2} now implies that 
there is an edge $\overline{R'}\rightarrow\overline{R}$ in $\nI$ such that $U(R')$ is a union of at least $2$ disjoint $2$-balls and $\partial g_\infty \cap \partial g_2 \simeq U(R)$. An inspection of the graph $\nI$ in Figures~\ref{Fig:B=2} and~\ref{Fig:B>2} shows that each successor of a node corresponding to $2$ disjoint $2$-balls corresponds to a single $2$-ball. Thus $\partial g_\infty \cap \partial g_2$ is a $2$-ball and item~(3) is proved.

To prove item (4) we note that by Lemma~\ref{lem:nBasic}~(i) each of the atoms $g_j \in \nQ_i(\mathbf{n})$ ($1\le j\le 3$) is a union of sets of the form $M^{-i-1}(T+z)^\circ$ with $z\in \Z^3$. This union is finite unless $g_j=g_\infty$. Thus $\partial g_1\cap\partial g_2 \cap \partial g_3$ is a finite (possibly empty) union of intersections of the form $M^{-i-1}((T+z_1) \cap (T+z_2) \cap (T+ z_3))$ with $z_1,z_2,z_3\in \Z^3$. By Proposition~\ref{lem:TZ:19}~(2) each of these intersections is either empty or homeomorphic to an arc. This proves item~(4).
\end{proof}

\subsection{An order on the subsets of an atom}\label{sec:order}
Let $T$ be an $ABC$-tile with $A=1$ having $14$ neighbors. Let $\mathbf{n}=(n_j)_{j\ge 1}$ be a sequence with $n_j\in \N\cup\{\infty\}$ satisfying $n_1\ge 3$ and  $n_{j+1}-n_j\ge 3$ and let $(\nQ_i(\mathbf{n}))_{i\ge 1}$ be the associated sequence of regular partitionings defined in \eqref{eq:nPn} (see Lemma~\ref{lem:nBasic}).
In this section we define an order on the sets $\{g' \in \nQ_{i+1}(\mathbf{n}) \,:\, g' \subseteq g\}$ of atoms in $\nQ_{i+1}(\mathbf{n})$ that are contained in some fixed $g \in \nQ_{i}(\mathbf{n})$ and prove some connectivity properties of related intersections ($i\ge 1$). 

Let $k\in \N$. If $z=(e_{k-1},\ldots,e_0)_M$ and $z'=(e'_{k-1},\ldots,e'_0)_M$ are elements of $\D_k$ we say that $z\prec z'$  if and only if $(e_{k-1},\ldots,e_0)<_{\mathrm{lex}}(e'_{k-1},\ldots,e'_0)$ in lexicographic order (so, for instance $(2,1,4)_M\prec(3,0,0)_M$ and $(0,2,3)_M\prec(0,2,4)_M$).
This defines an order on $\D_k$. By definition, this order has the following property. 
Let $k,k'\in \N$ with $k\le k'$ be given.
Let $M^{-k}(T+ d_1)$, $M^{-k}(T+ d_2)$ with $d_1,d_2\in \D_k$, $d_1\neq d_2$. If $M^{-k'}(T+ d'_\ell)$ with $d'_\ell \in \D_{k'}$ is a subtile of $M^{-k}(T+ d_\ell)$ for $\ell\in\{1,2\}$, then 
\begin{equation}\label{eq:d1d1primed2deprime}
d_1 \prec d_2 \quad (\hbox{in } \D_k)
\qquad\Longleftrightarrow\qquad 
d'_1 \prec d'_2 \quad(\hbox{in } \D_{k'}).
\end{equation}

We continue with two lemmas that will be needed in the proof of the connectivity result stated in Proposition~\ref{prop:orderP2}.

\begin{lem}\label{lem:3first}
Let $T$ be an $ABC$-tile and assume that  $A=1$.
Let $z=( e_2, e_1, e_0)_M\in \mathcal{D}_3$ be given. Then the following assertions hold (where ``$\prec$'' denotes the order on $\D_3$).
\begin{itemize}
\item $z+P \in \mathcal{D}_3$ with $z \prec z+P$ if and only if $ e_0 < C-1$.
\item $z+Q \in \mathcal{D}_3$ with $z \prec z+Q$ if and only if $ e_0<C-1$ and $ e_1<C-1$.
\item $z+N \in \mathcal{D}_3$ with $z \prec z+N$ if and only if $ e_0<C-B$, $ e_1<C-1$, and $ e_2<C-1$.
\item $z+Q-P \in \mathcal{D}_3$ with $z \prec z+Q-P$ if and only if $ e_1<C-1$.
\item $z+N-P \in \mathcal{D}_3$ with $z \prec z+N-P$ if and only if $ e_0<C-B+1$, $ e_1<C-1$, and $ e_2<C-1$.
\item $z+N-Q \in \mathcal{D}_3$ with $z \prec z+N-Q$ if and only if $ e_0<C-B+1$ and $ e_2<C-1$.
\item $z+N-Q+P \in \mathcal{D}_3$ with $z \prec z+N-Q+P$ if and only if $ e_0<C-B$ and $ e_2<C-1$.
\item Let $\alpha\in -\mathcal{S}_1$. Then $z+\alpha\in \mathcal{D}_3$ and $z\prec z+\alpha$ cannot hold simultaneously.
\end{itemize}
\end{lem}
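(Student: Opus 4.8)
The plan is to reduce the entire statement to a coordinate computation, exploiting the very special shape of $M$. First I would make the notation $(e_2,e_1,e_0)_M$ explicit in Cartesian coordinates. A direct calculation with $M$ gives $M(e_1,0,0)^t=(0,e_1,0)^t$ and $M^2(e_2,0,0)^t=(0,0,e_2)^t$, whence
\[
z=(e_2,e_1,e_0)_M=(e_0,0,0)^t+(0,e_1,0)^t+(0,0,e_2)^t=(e_0,e_1,e_2)^t.
\]
In particular $\mathcal{D}_3=\{0,\dots,C-1\}^3$, so that for any $\alpha$ the condition $z+\alpha\in\mathcal{D}_3$ is equivalent to requiring each of the three coordinates of $z+\alpha$ to lie in $\{0,\dots,C-1\}$.

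The second, and conceptually central, step is to identify the order $\prec$. Under the identification above, $z\prec z'$ means $(e_2,e_1,e_0)<_{\mathrm{lex}}(e_2',e_1',e_0')$, i.e.\ $\prec$ is the reverse-lexicographic order on $\Z^3$ (compare the third coordinate, then the second, then the first). This order is translation invariant, so I can record the key reduction: for every $\alpha$,
\[
z\prec z+\alpha \quad\Longleftrightarrow\quad 0\prec\alpha,
\]
independently of $z$. Here $0\prec\alpha$ holds exactly when the first nonzero entry of the reversed coordinate triple of $\alpha$ is positive.

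With these two observations the seven positive bullets become a short table. For each $\alpha\in\nS_1$ I would substitute its coordinate form coming from $A=1$ (so $Q=(1,1,0)^t$, $N=(B,1,1)^t$, $Q-P=(0,1,0)^t$, $N-P=(B-1,1,1)^t$, $N-Q=(B-1,0,1)^t$, $N-Q+P=(B,0,1)^t$), write down $z+\alpha$, and read off the coordinatewise membership inequalities; these are precisely the stated conditions (each inequality of the form $e_i<C-c$ arises from requiring the incremented $i$-th coordinate not to exceed $C-1$). The $\prec$-part of each bullet is then automatic: one checks $0\prec\alpha$ for all seven elements of $\nS_1$, which holds because the third coordinate equals $1$ for $N,N-P,N-Q,N-Q+P$ and, when it vanishes, the second coordinate is positive for $Q,Q-P$ and the first for $P$. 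Hence ``$z+\alpha\in\mathcal{D}_3$ with $z\prec z+\alpha$'' collapses to ``$z+\alpha\in\mathcal{D}_3$''. For the last bullet, $\alpha\in-\nS_1$ gives $-\alpha\in\nS_1$, so $0\prec-\alpha$ and therefore $\alpha\prec0$; by translation invariance $z+\alpha\prec z$, so $z\prec z+\alpha$ can never hold, which is exactly the claim.

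There is no genuine obstacle here: the argument is pure bookkeeping. The only thing one must get right is the coordinate computation for $M$ and the tabulation of the seven increments. The entire conceptual content lies in recognizing that $\prec$ is a translation-invariant (reverse-lexicographic) order, which is what makes the ordering half of each equivalence hold for free and forces the sign dichotomy between $\nS_1$ and $-\nS_1$.
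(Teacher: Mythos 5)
Your proof is correct and follows essentially the same route as the paper, whose entire proof is the remark that $(e_2,e_1,e_0)_M=(e_0,e_1,e_2)^t$ followed by "direct calculation"; you have simply made that calculation explicit, including the useful observation that $\prec$ is a translation-invariant (colexicographic) order so that $z\prec z+\alpha$ reduces to $0\prec\alpha$ and the membership inequalities alone govern each bullet.
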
 

The proof is done easily by direct calculation; note that 
$(e_2,e_1,e_0)_M=\begin{pmatrix} e_0\\ e_1\\e_2 \end{pmatrix}$.

\begin{lem}\label{lem:3second}
Let $T$ be an $ABC$-tile with $14$ neighbors and assume that  $A=1$.
Let $j\in\{1,2,3\}$ and $z\in \mathcal{D}_j$ be given. Then
\[
U_{z,j}=(T + z) \cap \bigg(\partial (M^jT) \cup \bigcup_{\begin{subarray}{c} y \prec z \\ y\in \mathcal{D}_j \end{subarray}} (T+y)\bigg) 
\]
is a connected set (here ``$\prec$'' denotes the order on $\D_j$).
\end{lem}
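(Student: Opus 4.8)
I would prove Lemma~\ref{lem:3second} by treating the three values $j\in\{1,2,3\}$ in a single uniform framework, exploiting the self-affine structure and the combinatorial data recorded in Lemma~\ref{lem:3first}. The key idea is that $U_{z,j}$ is the intersection of the single tile $T+z$ with the union of the ``outer boundary'' $\partial(M^jT)$ together with all the tiles $T+y$ that precede $T+z$ in the order $\prec$. Since all of these pieces are (by Proposition~\ref{lem:TZ:19} and Lemma~\ref{lem:typetopo}) of the form $U(R)$ for suitable types $\overline{R}\subseteq\nS$, the set $U_{z,j}$ is a union of the neighbor-intersection pieces $\B_{\alpha}+z$ for $\alpha$ ranging over a specific subset $R_{z,j}\subseteq\nS$. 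The plan is therefore to identify this subset $R_{z,j}$ explicitly and then invoke Lemma~\ref{eq:UrO} to reduce the connectivity question to the purely combinatorial question of whether $\bigcup_{\alpha\in R_{z,j}} O_\alpha$ is connected on the truncated octahedron.

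First I would make the reduction precise. For fixed $z\in\D_j$, a tile $T+y$ with $y\in\D_j$ touches $T+z$ exactly when $y-z\in\nS$, and the outer boundary contributes those neighbors $\alpha\in\nS$ for which $T+z+\alpha$ lies outside $M^jT$ (equivalently, $z+\alpha\notin\D_j$). Hence
\[
U_{z,j} = \bigcup_{\alpha\in R_{z,j}} (\B_{\alpha}+z),
\qquad
R_{z,j} = \{\alpha\in\nS : z+\alpha\notin\D_j \text{ or } (z+\alpha\in\D_j \text{ and } z+\alpha\prec z)\}.
\]
By Lemma~\ref{eq:UrO} it then suffices to show that $\bigcup_{\alpha\in R_{z,j}} O_\alpha$ is connected. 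The crucial observation, which Lemma~\ref{lem:3first} is designed to supply, is that $R_{z,j}$ always contains all of $-\nS_1=\{-P,-Q,-N,P-Q,P-N,Q-N,Q-N+P\}$: indeed the last bullet of Lemma~\ref{lem:3first} states that for $\alpha\in-\nS_1$ one cannot have simultaneously $z+\alpha\in\D_j$ and $z\prec z+\alpha$, so each such $\alpha$ lands in $R_{z,j}$. Thus $-\nS_1\subseteq R_{z,j}$, and the seven faces $O_\alpha$, $\alpha\in-\nS_1$, already form a connected patch of the truncated octahedron — in fact (by the Hata graph, Figure~\ref{HataGraph}) these seven mutually adjacent faces cover a hemisphere of $O$ and are connected among themselves.

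The heart of the argument is then to show that every additional face $O_\alpha$ with $\alpha\in R_{z,j}\cap\nS_1$ attaches connectedly to this core patch $\bigcup_{\alpha\in-\nS_1}O_\alpha$. For this I would read off from Lemma~\ref{lem:3first}, for each $\alpha\in\nS_1$, the precise digit condition (on $e_0,e_1,e_2$) under which $\alpha\notin R_{z,j}$, i.e.\ under which $z\prec z+\alpha$ and $z+\alpha\in\D_j$; whenever that condition fails, $\alpha\in R_{z,j}$ and its face must be checked for adjacency. The point is that each positive neighbor $\alpha\in\nS_1$ shares an edge $O_{\{\alpha,\beta\}}$ with some $\beta\in-\nS_1$ in the Hata graph, so $O_\alpha$ automatically connects to the core through that edge; one verifies on Figure~\ref{HataGraph} that $\nS_1$ and $-\nS_1$ are adjacent in this way. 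Consequently $\bigcup_{\alpha\in R_{z,j}}O_\alpha$ is connected regardless of which subset of $\nS_1$ survives, establishing the lemma for all three values of $j$ at once.

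**The main obstacle.** The delicate part will be the bookkeeping of the type $R_{z,j}$ — specifically, confirming that the set identity for $U_{z,j}$ holds and that the boundary contribution $\partial(M^jT)$ is correctly accounted for in terms of $\nS$. The subtlety is that $\partial(M^jT)$, scaled back by $M^{-j}$, is $\partial T$, and intersecting $T+z$ with it produces exactly the neighbor pieces $\B_\alpha$ with $z+\alpha\notin\D_j$; making this rigorous requires the tiling property (Remark~\ref{rem:1}) together with a careful argument that no spurious lower-level intersections arise for $j\le 3$. Once the inclusion $-\nS_1\subseteq R_{z,j}$ is in hand, the connectivity conclusion is robust and essentially forced by the adjacency structure of the tetrakis hexahedron / truncated octahedron, so the combinatorial step is routine; it is the clean identification of $R_{z,j}$ from the order $\prec$ and Lemma~\ref{lem:3first} that carries the real content.
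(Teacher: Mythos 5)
Your reduction of $U_{z,j}$ to $\bigcup_{\alpha\in\nS\setminus\nS'}(T+z)\cap(T+z+\alpha)$ with $\nS'=\{\alpha\in\nS: z+\alpha\in\D_j \text{ and } z\prec z+\alpha\}$ is exactly the paper's first step, and your observation that the last bullet of Lemma~\ref{lem:3first} forces $-\nS_1\subseteq R_{z,j}$ (so that the connected patch $\bigcup_{\alpha\in-\nS_1}O_\alpha\simeq U(-\nS_1)$, a $2$-ball, is always present) is correct and is a genuinely nice structural shortcut. However, the step you call ``automatic'' contains a real gap: it is \emph{not} true that every $\alpha\in\nS_1$ is adjacent in $H(\nS)$ to some $\beta\in-\nS_1$. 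Recall that $\alpha_1\relbar\!\relbar\!\relbar\alpha_2$ in $H(\nS)$ iff $\alpha_1-\alpha_2\in\nS$. For $\alpha=N$ one checks that $N-\beta\notin\nS$ for every $\beta\in-\nS_1$ (the differences are $N+P$, $N+Q$, $2N$, $N+Q-P$, $2N-P$, $2N-Q$, $2N-Q+P$, none of which lies in $\nS$), whereas $N-\gamma\in\nS$ for all six $\gamma\in\nS_1\setminus\{N\}$. Geometrically, $O_N$ is the hexagonal face whose entire ring of neighbors is $\nS_1\setminus\{N\}$; the two ``caps'' $\nS_1$ and $-\nS_1$ meet only along faces other than $O_{\pm N}$. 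Consequently your conclusion that the union is connected ``regardless of which subset of $\nS_1$ survives'' is false: for instance $R=-\nS_1\cup\{N\}$ would yield a disconnected $U(R)$.

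To close the gap you must show that whenever $N\in R_{z,j}$, at least one of its six $\nS_1$-neighbors also lies in $R_{z,j}$ and itself chains down to the core. This is true, but only because of the specific digit conditions in Lemma~\ref{lem:3first}: e.g.\ for $j=3$, $N\in R_{z,3}$ forces $e_0\ge C-B$ or $e_1=C-1$ or $e_2=C-1$, and in each of these situations one of $N-Q+P$ or $Q-P$ survives as well and is Hata-adjacent both to $N$ and (directly or via $P-Q$, $-P$) to $-\nS_1$. Verifying this is precisely the kind of case distinction on $(e_2,e_1,e_0)$ that the paper carries out; its proof runs through twelve explicit cases, computes $\nS'$ in each, and identifies $U_{z,3}$ up to homeomorphism (a $2$-ball, a ribbon $\mathbb{S}^1\times[0,1]$, or $\mathbb{S}^2$) via Lemma~\ref{eq:UrO}, which in particular gives connectivity. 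So your approach can be repaired and would even be somewhat lighter than the paper's (you only need connectivity, not the homeomorphism type), but as written the key adjacency claim fails for $N$, and fixing it reintroduces the case analysis you were trying to avoid. A minor further slip: the seven faces $O_\alpha$, $\alpha\in-\nS_1$, are not mutually adjacent (e.g.\ $-Q$ and $P-N$ are not), though their union is indeed connected, so this does not affect the argument.
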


\begin{proof}
The intersection in the statement of the lemma can be written as
\[
U_{z,j} = (T + z) \cap \bigg( \bigcup_{y\not\in \mathcal{D}_j}(T+y) \cup \bigcup_{\begin{subarray}{c} y \prec z \\ y\in \mathcal{D}_j \end{subarray}} (T+y)\bigg) 
=
\bigcup_{\alpha \in \mathcal{S} \setminus \mathcal{S}'} (T+z) \cap (T+z+\alpha),
\]
where
\[
\mathcal{S}' = \{\alpha\in \mathcal{S} \;:\; z+\alpha\in \mathcal{D}_j \hbox{ and } z\prec z+\alpha\}.
\]
We prove the case $j=3$. To this end let $z=( e_2, e_1, e_0)_M\in \mathcal{D}_3$. We have to distinguish 12 cases according to the inequalities occurring in Lemma~\ref{lem:3first}.
\begin{itemize}
\item[(i)] $ e_2\in \{0,\ldots, C-2\}$, $ e_1\in \{0,\ldots, C-2\}$, $ e_0\in \{0,\ldots,C-B-1\}$. According to Lemma~\ref{lem:3first} in this case we have $\mathcal{S}'= \mathcal{S}_1$ and, hence,
$
U_{z,3} = \bigcup_{\alpha \in -\mathcal{S}_1} (T+z) \cap (T+z+\alpha)
$
is homeomorphic to the (connected) $2$-ball \circled{$a^3$}.

\item[(ii)] $ e_2\in \{0,\ldots, C-2\}$, $ e_1\in \{0,\ldots, C-2\}$, $ e_0 = C-B$. Here Lemma~\ref{lem:3first} yields $\mathcal{S}'=\{P,Q,Q-P,N-P,N-Q\}$, hence, $\mathcal{S} \setminus \mathcal{S}'=-S_1 \cup\{N,N-Q+P\}$ and $U_{z,3}$ easily seen to be a $2$-ball by using Lemma~\ref{eq:UrO}.

\item[(iii)] $ e_2\in \{0,\ldots, C-2\}$, $ e_1\in \{0,\ldots, C-2\}$, $ e_0 = \{C-B+1,\ldots, C-2\}$. Lemma~\ref{lem:3first} yields $\mathcal{S}'=\{P,Q,Q-P\}$ and $U_{z,3}$ is homeomorphic to the $2$-ball \circled{$a^2$}.

\item[(iv)] $ e_2\in \{0,\ldots, C-2\}$, $ e_1\in \{0,\ldots, C-2\}$, $ e_0 = C-1$. Here $\mathcal{S}'=\{Q-P\}$ and $U_{z,3}$ is homeomorphic to the $2$-ball $\partial T\setminus$\circled{$g^4$}.

\item[(v)] $ e_2\in \{0,\ldots, C-2\}$, $ e_1=C-1$, $ e_0\in \{0,\ldots,C-B-1\}$. Here $\mathcal{S}'=\{P,N-Q,N-Q+P\}$ and $U_{z,3}$ is homeomorphic to the $2$-ball $\partial T\setminus$\circled{$d^4$}.

\item[(vi)] $ e_2\in \{0,\ldots, C-2\}$, $ e_1=C-1$, $ e_0=C-B$. Here $\mathcal{S}'=\{P,N-Q\}$ and $U_{z,3}$ is homeomorphic to $\mathbb{S}^1\times[0,1]$ by Lemma~\ref{eq:UrO} and, hence, it is connected.

\item[(vii)] $ e_2\in \{0,\ldots, C-2\}$, $ e_1=C-1$, $ e_0=\{C-B+1,\ldots, C-2\}$. Here $\mathcal{S}'=\{P\}$ and $U_{z,3}$ is homeomorphic to the $2$-ball \circled{$a^1$}.

\item[(viii)] $ e_2\in \{0,\ldots, C-2\}$, $ e_1=C-1$, $ e_0=C-1$. Here $\mathcal{S}'=\emptyset$ and $U_{z,3}$ is homeomorphic to the $2$-sphere by Lemma~\ref{eq:UrO} and, hence, it is connected.

\item[(ix)] $ e_2=C-1$, $ e_1\in \{0,\ldots, C-2\}$, $ e_0\in \{0,\ldots,C-2\}$. Here $\mathcal{S}'= \{P,Q,Q-P\}$ and, hence, $U_{z,3}$ is  homeomorphic to the $2$-ball \circled{$a^2$}.

%\item[(ix)] $ e_2=C-1$, $ e_1\in \{0,\ldots, C-2\}$, $ e_0\in \{0,\ldots,C-B-1\}$. Here $\mathcal{S}'= \{P,Q,Q-P\}$ and, hence, $U_{z,3}$ is  homeomorphic to the $2$-ball \circled{$a^2$}.
%
%\item[(x)] $ e_2=C-1$, $ e_1\in \{0,\ldots, C-2\}$, $ e_0 = C-B$. Here $\mathcal{S}'=\{P,Q,Q-P\}$, hence, $U_{z,3}$ is again homeomorphic to the $2$-ball \circled{$a^2$}.
%
%\item[(xi)] $ e_2=C-1$, $ e_1\in \{0,\ldots, C-2\}$, $ e_0 = \{C-B+1,\ldots, C-2\}$. Here $\mathcal{S}'=\{P,Q,Q-P\}$ and $U_{z,3}$ is again homeomorphic to the $2$-ball \circled{$a^2$}.

\item[(x)] $ e_2=C-1$, $ e_1\in \{0,\ldots, C-2\}$, $ e_0 = C-1$. Here $\mathcal{S}'=\{Q-P\}$ and $U_{z,3}$ is homeomorphic to the $2$-ball $\partial T\setminus$\circled{$g^4$}.

\item[(xi)] $ e_2=C-1$, $ e_1=C-1$, $ e_0\in \{0,\ldots,C-2\}$. Here $\mathcal{S}'=\{P\}$ and $U_{z,3}$ is homeomorphic to the $2$-ball \circled{$a^1$}.

%\item[(xi)] $ e_2=C-1$, $ e_1=C-1$, $ e_0\in \{0,\ldots,C-B-1\}$. Here $\mathcal{S}'=\{P\}$ and $U_{z,3}$ is homeomorphic to the $2$-ball \circled{$a^1$}.
%
%\item[(xii)] $ e_2=C-1$, $ e_1=C-1$, $ e_0=C-B$. Here $\mathcal{S}'=\{P\}$ and $U_{z,3}$ is homeomorphic to is again homeomorphic to the $2$-ball \circled{$a^1$}.
%
%\item[(xiii)] $ e_2=C-1$, $ e_1=C-1$, $ e_0=\{C-B+1,\ldots, C-2\}$. Here $\mathcal{S}'=\{P\}$ and $U_{z,3}$ is again homeomorphic to the $2$-ball \circled{$a^1$}.

\item[(xii)] $ e_2=C-1$, $ e_1=C-1$, $ e_0=C-1$. Here $\mathcal{S}'=\emptyset$ and $U_{z,3}$ is homeomorphic to the $2$-sphere.
\end{itemize}

The proof for the cases $j\in\{1,2\}$ is similar but easier than the case $j=3$ and we omit it.
\end{proof}

We are now ready to prove the following proposition.  Note that the property proved in this result is related to the condition stated in Proposition~\ref{prop:bing3}~(4).

\begin{prop}\label{prop:orderP2}
Let $T$ be an $ABC$-tile with $14$ neighbors and assume that  $A=1$. Let $\mathbf{n}=(n_j)_{j\ge 1}$ be a sequence with $n_j\in \N\cup\{\infty\}$ satisfying $n_1\ge 3$ and  $n_{j+1}-n_j\ge 3$.
Let $i\ge 1$ and let $g\in\nQ_i(\mathbf{n})$ be given. The set $\{g_1,\ldots,g_n\}\subseteq \nQ_{i+1}(\mathbf{n})$ of all atoms of $\nQ_{i+1}(\mathbf{n})$ that are subsets of $g$ can be ordered in a way that $\partial g_j \cap (\partial g \cup \partial g_1\cup\dots\cup \partial g_{j-1})$ is connected for each $j\in \{1,\ldots, n\}$.
\end{prop}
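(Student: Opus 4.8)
The plan is to reduce the whole statement, via the self-affinity of $T$, to the connectivity already established in Lemma~\ref{lem:3second}. First I would dispose of the trivial cases. If $g=g_\infty$, then $g_\infty$ is the only atom of $\nQ_{i+1}(\mathbf{n})$ contained in it, so $n=1$ and $\partial g_1\cap\partial g=\partial g_\infty=\partial T\cong\mathbb{S}^2$ is connected. Likewise, if $g$ is not subdivided, then $n=1$ and the single intersection $\partial g_1\cap\partial g=\partial g$ is connected. Hence I may assume that $g=M^{-k}(T+d)^\circ\neq g_\infty$ with $k=\level{g}$ is subdivided into at least two atoms.

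The key structural observation is that, since $\nQ_{i+1}(\mathbf{n})$ is a regular partitioning of $T$ refining $\nQ_i(\mathbf{n})$ by Lemma~\ref{lem:nBasic}, the closures $\overline{g_1},\dots,\overline{g_n}$ of the atoms contained in $g$ form a regular partitioning of $\overline g$ by subtiles, that is, a complete antichain (a ``section'') in the tree of subtiles of $g$. Applying the affine homeomorphism $[g]^{-1}\colon\overline g\to T$, I obtain a section $\mathcal{A}$ of the subtile tree of $T$, where $[g]^{-1}(\overline{g_m})=M^{-r_m}(T+w_m)$ with $w_m\in\D_{r_m}$ and relative level $r_m=\level{g_m}-k$. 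By Lemma~\ref{lem:nBasic}~(i) one has $k\in\{i-1,i,i+1\}$ and $\level{g_m}\in\{i,i+1,i+2\}$, so $r_m\in\{1,2,3\}$; this is exactly the range covered by Lemma~\ref{lem:3second}, which is the reason that lemma is formulated for all three values $j\in\{1,2,3\}$.

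I would then order $g_1,\dots,g_n$ by the order $\prec$, extended from each $\D_r$ to the mixed-level section in the lexicographic way, comparing addresses after refining to a common level; this is well defined and consistent across levels thanks to the compatibility \eqref{eq:d1d1primed2deprime}, and, since all the atoms contained in $g$ have addresses extending $d$, ordering the $g_m$ by their addresses in $\D_{\level{g_m}}$ coincides with ordering the $w_m$ in $\mathcal{A}$. Fix $a=M^{-j}(T+z)\in\mathcal{A}$ with $j=\level{a}$. Using \eqref{eq:inteq} to replace boundary intersections of essentially disjoint subtiles by the subtiles themselves, the intersection to be controlled becomes
\[
\partial a\cap\Big(\partial T\cup\bigcup_{a'\prec a}\partial a'\Big)=a\cap\Big(\partial T\cup\bigcup_{a'\prec a}a'\Big).
\]
The crucial point is then to identify $\bigcup_{a'\prec a}a'=\bigcup_{y\in\D_j,\,y\prec z}M^{-j}(T+y)$: because $\mathcal{A}$ is a complete section with relative levels at most $3$, every sufficiently deep subtile of $T$ lies below a unique element of $\mathcal{A}$, the blocks of such subtiles sitting below the various $a'$ are $\prec$-contiguous and do not interleave (here \eqref{eq:d1d1primed2deprime} and the antichain property are used), and the union of all blocks preceding $a$ collapses exactly onto the level-$j$ subtiles with address $\prec z$. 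Consequently the displayed set equals $M^{-j}U_{z,j}$, which is connected by Lemma~\ref{lem:3second}. Transporting this back through the homeomorphism $[g]$ (which maps $\partial T$ to $\partial g$ and preserves connectivity) yields the asserted connectivity for $g$.

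The routine ingredients are the trivial cases and the translation through $[g]$; the one step that needs genuine care is the identification $\bigcup_{a'\prec a}a'=\bigcup_{y\prec z}M^{-j}(T+y)$ in the presence of mixed relative levels, i.e.\ checking that the extended lexicographic order on the section collapses correctly onto the order on $\D_j$. Once this combinatorial bookkeeping is settled, the entire analytic content is supplied by Lemma~\ref{lem:3second}, so this is where I expect the main obstacle to lie.
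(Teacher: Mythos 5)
Your proposal is correct and follows essentially the same route as the paper: both reduce to Lemma~\ref{lem:3second} by transporting the atoms through the affine map $[g]^{-1}$, ordering them via the lexicographic order $\prec$ refined to a common level (the paper uses level $3$) with consistency guaranteed by \eqref{eq:d1d1primed2deprime}, and then identifying the union of the preceding atoms with $\bigcup_{y\prec z}M^{-j}(T+y)$ by subdividing or grouping subtiles with the set equation. The combinatorial step you flag as the main obstacle is exactly the step the paper carries out in its displayed computation \eqref{eq:3stepconn}.
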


\begin{proof}
If $n=1$ (which is true in particular for $g=g_\infty$) the result is trivial. If $n>1$ then $g=M^{-k}(T^\circ+z)$ for some $k\ge 0$ and some $z\in \D_k$. For convenience, we set $g'=M^kg-z=T^\circ$ and $g_j'=M^kg_j-z$ for $j\in\{1,\ldots, n\}$. By Lemma~\ref{lem:nBasic}~(i) we know that $g'_j = M^{-k_j}(T^\circ+y_j)$ with $k_j\in\{1,2,3\}$ and $y_j \in \D_{k_j}$. We assume that $\{g'_1,\ldots,g'_n\}$ is ordered in a way that the following is true: For each $j$ subdivide $\overline{g_j'}$ in subtiles of the form $M^{-3}(T+d)$ with $d\in \D_3$ by the set equation \eqref{eq:iset}. Let $j_1,j_2\in \{1,\ldots,n\}$ be distinct and let $M^{-3}(T+d_\ell)$ be a subtile of $\overline{g_{j_\ell}'}$ ($\ell\in\{1,2\}$). Then $d_1 \prec d_2$ w.r.t.\ the order in $\D_3$ if and only if $j_1<j_2$. 

Note that
\begin{equation}\label{eq:primeoverline}
\partial g_j \cap (\partial g \cup \partial g_1\cup\dots\cup \partial g_{j-1}) \simeq
\partial g'_j \cap (\partial g' \cup \partial g'_1\cup\dots\cup \partial g'_{j-1})=
\overline{g'_j} \cap (\partial{g'} \cup \overline{g'_1}\cup\dots\cup \overline{g'_{j-1}})
\end{equation}
holds for each $j\in \{1,\ldots, n\}$ (the equality holds because the sets $\overline{g'_1},\ldots,\overline{g'_n}$ cover $\overline{g'}$ overlapping only at their boundaries). Moreover, we have
\begin{align}
\overline{g'_j} \cap  (\partial g' \cup \overline{g'_1}\cup\dots\cup \overline{g'_{j-1}})
&=  M^{-k_j}(T + y_j) \cap \bigg(\partial T \cup \bigcup_{\ell=1}^{j-1} M^{-k_\ell}(T+y_\ell) \bigg)
\nonumber
\\
&=
M^{-k_j}(T + y_j) \cap \bigg(\partial T \cup \bigcup_{\begin{subarray}{c} y \prec y_j \\ y\in \mathcal{D}_{k_j} \end{subarray}} M^{-k_j}(T+y)\bigg) 
\label{eq:3stepconn}
\\
&=
M^{-k_j} \bigg((T + y_j) \cap \bigg(\partial (M^{k_j}T) \cup \bigcup_{\begin{subarray}{c} y \prec y_j \\ y\in \mathcal{D}_{k_j} \end{subarray}} (T+y)\bigg) \bigg).
\nonumber
\end{align}
In the second equality we used the set equation \eqref{eq:iset} to subdivide (if $k_\ell < k_j$) or group (if $k_\ell > k_j$) the sets $M^{-k_\ell}(T+y_\ell)$ into sets that are all of the form $M^{-k_j}(T + y)$ for some $y\in \D_{k_j}$.   By the ordering of $\{g'_{1},\ldots, g'_{n}\}$ and by property \eqref{eq:d1d1primed2deprime} of ``$\prec$'' this yields the union over all sets of the form $M^{-k_j}(T+y)$ with $ y \prec y_j$, $y\in \mathcal{D}_{k_j} $. 

Because $k_j\in\{1,2,3\}$, the last set in \eqref{eq:3stepconn} is connected by Lemma~\ref{lem:3second} and the result follows from \eqref{eq:primeoverline} and \eqref{eq:3stepconn}.
\end{proof}

\subsection{Proof of Theorem~\ref{thm:ball}}\label{sec:proof1}
We have to show that $T$ is homeomorphic to $\mathbb{D}^3$ under the conditions of Theorem~\ref{thm:ball}. In view of Section~\ref{sec:normal} we may assume that $T$ is an $ABC$-tile with $14$ neighbors and that  $A=1$. Throughout the proof we will use the fact that $T$ is a Peano continuum by Lemma~\ref{lem:HataPeano}.

Our strategy is to construct a decreasing sequence of partitionings of $\mathbb{D}^3$ that is equivalent to $(\nQ_i'(\mathbf{n}))_{i\ge 1}$ for a suitable sequence $\mathbf{n}=(n_j)_{j\ge 1}$ with $n_j\in \N$ (finite) satisfying $n_1\ge 3$ and  $n_{j+1}-n_j\ge 3$. Then the result will follow from Lemma~\ref{lem:homeo}. We will use the theory of partitionings due to Bing~\cite{Bing51}. Bing gives a topological characterization of $3$-spheres in terms of decreasing sequences of regular partitionings. In Theorem~\ref{thm:ball} we deal with $3$-balls instead of $3$-spheres. However, the main difference between Bing's setting and our's is that, contrary to his assumptions (see \cite[Theorem~1~(1.2)]{Bing51} and  the discussion in ~\cite[p.~25]{Bing51}), we do not have that for $g_1,g_2\in \bigcup_{i\ge 1} \nQ_i'(\mathbf{n})$ the intersection $\partial g_1 \cap \partial g_2$ is either empty or homeomorphic to $\mathbb{D}^2$. We have to settle for the weaker results in Proposition~\ref{lem:topologyP} (2) and (3). To make up for this we will exploit the self-affinity of~$T$. This difference is the reason why we cannot use Bing's original proof here.

The following lemma contains the crucial tool for the proof of Theorem~\ref{thm:ball}.

\begin{lem}\label{lem:HKnew}
There is a sequence $\mathbf{n}=(n_j)_{j\ge 1}$ with $n_j\in\N$, $n_j \ge 3$ and $n_{j+1}-n_j \ge 3$ such that there are sequences $(H_i)_{i\ge 1}$ and $(K_{n_j})_{j\ge 1}$ of partitionings of $\mathbb{R}^3$ with the following properties.
\begin{itemize}
\item[(i)] For each $h\in H_i$ the boundary $\partial h$ is a tame $2$-sphere in $\mathbb{R}^3$ ($i\ge 1$).
\item[(ii)] $H_{i+1}$ is a refinement of $H_i$ for each $i\ge 1$.
\item[(iii)] $h_0=\mathbb{R}^3\setminus \mathbb{D}^3$ is an atom of $H_i$ for each $i\ge 1$.
\item[(iv)] $(H_i\setminus \{h_0\})_{i\geq 1}$ is equivalent to $(\nQ'_i (\mathbf{n}))_{i \geq 1}$ in the sense of Definition~\ref{Def:Homeo}. 
\item[(v)] There is a sequence $(F_i)_{i\ge 1}$ where  $F_{i} : \bigcup_{g\in \nQ_{i}'(\mathbf{n})} \partial g \to \bigcup_{h\in H_{i}\setminus\{h_0\}} \partial h$ is a homeomorphism with the following properties: If $i > 1$ then the restriction of $F_{i}$ to $\bigcup_{g\in \nQ_{i-1}'(\mathbf{n})} \partial g$ is equal to $F_{i-1}$. If $i\ge 1$ then for each $g\in \nQ_{i}'(\mathbf{n})$ we have $F_{i}(\partial g) = \partial h$, where $h\in H_{i}\setminus\{h_0\}$ is the atom corresponding to $g$ under the equivalence in~(iv). 
\item[(vi)] For each $k\in K_{n_j}$ the boundary $\partial k$ is a tame $2$-sphere in $\mathbb{R}^3$ ($j\ge 1$).
\item[(vii)] $K_{n_{j+1}}$ is a refinement of $K_{n_j}$ for each $j\ge 1$.
\item[(viii)]$h_0$ is an atom of $K_{n_j}$ for each $j\ge 1$.
\item[(ix)] $(K_{n_j})_{j\geq 1}$ is equivalent to $(H_{n_j})_{j \geq 1}$ in the sense of Definition~\ref{Def:Homeo}. 
\item[(x)] $(K_{n_j} \setminus\{h_0\})_{j\ge 1}$ is a decreasing sequence of partitionings of $\mathbb{D}^3$. 
\end{itemize}
\end{lem}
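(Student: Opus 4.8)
The plan is to construct the two sequences simultaneously by induction, using Proposition~\ref{prop:bing3} to build the combinatorial model $(H_i)_{i\ge1}$ in $\R^3$ and Proposition~\ref{prop:bing5} to upgrade it to the geometrically shrinking sequence $(K_{n_j})_{j\ge1}$. The partitionings $H_i$ faithfully mirror the incidence pattern of $\nQ_i'(\mathbf{n})$ (with no control on diameters), while the $K_{n_j}$ are homeomorphic images of the $H_{n_j}$ arranged so that their atoms shrink to points.

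First I would build $(H_i)_{i\ge1}$ and the homeomorphisms $(F_i)_{i\ge1}$ by induction on $i$. For $i=1$ set $H_1=\{h_0,(\mathbb{D}^3)^\circ\}$ and let $F_1\colon\partial T\to\mathbb{S}^2$ be any homeomorphism; it exists since $\partial T\cong\mathbb{S}^2$, and $\mathbb{S}^2=\partial\mathbb{D}^3$ is tame. For the inductive step, given $H_i$ and $F_i$, I would process each atom $g\in\nQ_i'(\mathbf{n})$ separately: its corresponding atom $h\in H_i$ has a tame $2$-sphere boundary by~(i), and $F_i|_{\partial g}\colon\partial g\to\partial h$ together with the regular partitioning of $\overline g$ into the subtiles $\{g'\in\nQ_{i+1}'(\mathbf{n}):g'\subseteq g\}$ meets all hypotheses of Proposition~\ref{prop:bing3}: its condition~(1) is Proposition~\ref{lem:topologyP}~(1), condition~(2) is Proposition~\ref{lem:topologyP}~(2), condition~(3) is Proposition~\ref{lem:topologyP}~(4), and the ordering condition~(4) is exactly Proposition~\ref{prop:orderP2}. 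Applying Proposition~\ref{prop:bing3} subdivides $h$ and extends $F_i|_{\partial g}$ over the enlarged skeleton. The crucial point making the atom-by-atom subdivisions glue into a single partitioning $H_{i+1}$ and a single homeomorphism $F_{i+1}$ is that, for two adjacent atoms $g,g'$, the two extensions both restrict to $F_i$ on the common wall $\partial g\cap\partial g'$; hence the subdivisions they induce on that wall agree, and since $F_i$ is injective the incidences on either side reproduce those in $T$. This yields~(i)--(v).

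Next I would produce $(K_{n_j})_{j\ge1}$ from $(H_i)_{i\ge1}$ via Proposition~\ref{prop:bing5} with $C=\mathbb{S}^2$. Its conditions~(1),(3),(4) are immediate from~(i),(ii),(iii), and condition~(2) follows from the single-$2$-ball property of $\partial g_\infty\cap\partial g$ recorded in Proposition~\ref{lem:topologyP}~(3), since a single $2$-ball in a $2$-sphere is connected and non-separating. The key observation for condition~(5) is that $F_n$ extends $F_i$ on the skeleton $\bigcup_{h\in H_i}\partial h$, so a fine atom $g'\in H_n$ meets this skeleton in the image under the fixed map $F_i$ of the corresponding tile-side set; the latter has diameter tending to $0$ because $(\nQ_i'(\mathbf{n}))_{i\ge1}$ is a decreasing sequence of partitionings by Lemma~\ref{lem:nBasic}~(iii), and uniform continuity of $F_i$ transports this smallness to $\R^3$. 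With Proposition~\ref{prop:bing5} available, I would build the $K_{n_j}$ together with auxiliary homeomorphisms $\Xi_j$ of $\R^3$ satisfying $K_{n_j}=\Xi_j(H_{n_j})$ by induction on $j$ (starting from the single atom $(\mathbb{D}^3)^\circ$). At stage $j$, inside each atom of $K_{n_j}$ the subdivision down to level $n_{j+1}$ is, by the repetitivity built into $(\nQ_i'(\mathbf{n}))$, a rescaled copy of the entire $H$-construction; this lets me apply Proposition~\ref{prop:bing5} \emph{within that atom} to obtain a homeomorphism supported there that shrinks its level-$n_{j+1}$ sub-atoms below $1/(j+1)$, provided $n_{j+1}$ is chosen large enough, and I take $n_{j+1}\ge n_j+3$ to respect the gap constraint. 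Composing these atom-supported homeomorphisms with $\Xi_j$ gives $\Xi_{j+1}$; since each of them fixes the boundary of the atom in which it lives, $\Xi_{j+1}$ agrees with $\Xi_j$ on the coarser skeleton, so $K_{n_{j+1}}$ refines $K_{n_j}$. Properties~(vi)--(ix) are inherited from the $H_{n_j}$ through the homeomorphisms $\Xi_j$, and~(x) follows because the atoms of $K_{n_j}$ have diameter below $1/j$.

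The main obstacle, as I see it, is concentrated in the passage from $(H_i)$ to $(K_{n_j})$ and is twofold: first, verifying condition~(5) of Proposition~\ref{prop:bing5}, where the contraction $M^{-1}$ and the uniform continuity of the already-fixed maps $F_i$ must be combined carefully; and second, the local-to-global assembly of the shrinking homeomorphisms, where one must use the repetitive structure of $(\nQ_i'(\mathbf{n}))$ and the gaps $n_{j+1}-n_j\ge3$ to guarantee that the \emph{same} application of Proposition~\ref{prop:bing5} may be performed independently inside every atom and then glued while preserving refinement. The construction of $(H_i)$ itself is, by contrast, a fairly direct bookkeeping exercise once the hypotheses of Proposition~\ref{prop:bing3} are matched to Propositions~\ref{lem:topologyP} and~\ref{prop:orderP2}.
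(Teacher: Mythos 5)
Your proposal is correct and follows essentially the same route as the paper: Proposition~\ref{prop:bing3} combined with Propositions~\ref{lem:topologyP} and~\ref{prop:orderP2} builds $(H_i)$ atom by atom with the extensions glued along common walls, and Proposition~\ref{prop:bing5}, applied inside each atom of $K_{n_j}$ via the repetitivity of $(\nQ_i'(\mathbf{n}))$ and the single-$2$-ball property of Proposition~\ref{lem:topologyP}~(3), produces the shrinking homeomorphisms and the adaptive choice of $n_{j+1}$. The only difference is organizational: the paper interleaves the two constructions in a single two-case induction on $m$ (proving by contradiction that the ``shrinking case'' occurs infinitely often), whereas you present them as two phases with $\mathbf{n}$ chosen on the fly, which amounts to the same argument.
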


\begin{proof}
The proof splits in two parts. The first part is an induction proof in which we construct the sequences $(H_i)_{i\ge 1}$ and $(K_{n_j})_{j\ge 1}$, where the second sequence might {\it a priori} be finite. In the second part of the proof we show that $(K_{n_j})_{j\ge 1}$ is in fact an infinite sequence.

\medskip

We say that $\mathcal{A}(m)$ holds if there exist $j_0=j_0(m) \in \N$,  $n_1,\ldots, n_{j_0} \le m$, $n_{j}=\infty$ for $j>j_0$, and finite sequences $(H_i)_{i= 1}^m$, $(K_{n_j})_{j=1}^{j_0}$ of partitionings of $\mathbb{R}^3$ such that the following properties hold. (We set $n_0=0$  and $K_0=\{h_0,\R^3\setminus\{h_0\}\}$ for convenience.) 
\begin{itemize}
\item[(i-$m$)] For each $h\in H_i$ the boundary $\partial h$ is a tame $2$-sphere in $\mathbb{R}^3$ $(1\le i \le m)$.
\item[(ii-$m$)] $H_{i+1}$ is a refinement of $H_i$ ($1\le i < m$).
\item[(iii-$m$)] $h_0=\mathbb{R}^3\setminus \mathbb{D}^3$ is an atom of $H_i$ ($1\le i \le m$).
\item[(iv-$m$)] $(H_i\setminus \{h_0\})_{i= 1}^m$ is equivalent to $(\nQ_{i}'(\mathbf{n}))_{i =1}^m$ in the sense of Definition~\ref{Def:Homeo}. 
\item[(v-$m$)] There is a sequence $(F_i)_{i= 1}^{m}$ where  $F_{i} : \bigcup_{g\in \nQ_{i}'(\mathbf{n})} \partial g \to \bigcup_{h\in H_{i}\setminus\{h_0\}} \partial h$ is a homeomrophism with the following properties:  If $1<i\le m$ then the restriction of $F_{i}$ to $\bigcup_{g\in \nQ_{i-1}'(\mathbf{n})} \partial g$ is equal to $F_{i-1}$. If $1\le i \le m$ then for each $g\in \nQ_{i}'(\mathbf{n})$ we have $F_{i}(\partial g) = \partial h$, where $h\in H_{i}\setminus\{h_0\}$ is the atom corresponding to $g$ under the equivalence in (iv-$m$). 
\item[(vi-$m$)] For each $k\in K_{n_j}$ the boundary $\partial k$ is a tame $2$-sphere in $\mathbb{R}^3$ ($1\le j\le j_0$).
\item[(vii-$m$)] $K_{n_{j+1}}$ is a refinement of $K_{n_j}$ $(1 \le j < j_0)$.
\item[(viii-$m$)]$h_0$ is an atom of $K_{n_j}$ $(1\le j\le j_0)$.
\item[(ix-$m$)] $(K_{n_j})_{j=1}^{j_0}$ is equivalent to $(H_{n_j})_{j=1}^{j_0}$ in the sense of Definition~\ref{Def:Homeo}. 
\item[(x-$m$)] 
There exist homeomorphisms $f_1,\ldots, f_{j_0}: \R^3\to\R^3$  such that each boundary point of each atom of $K_{n_{j-1}}$ is invariant under $f_j$ and $f_j\circ \cdots \circ f_1$ keeps $\R^3\setminus \mathbb{D}^3$ invariant and carries each other atom of $H_{n_j}$ into a set of diameter less than $\frac 1{2^j}$. Moreover, 
\[
K_{n_j}=\{f_j\circ \cdots \circ f_2\circ f_1(h)\;:\; h\in H_{n_j}\}
\]
($1\le j\le j_0$).
Thus $K_{n_j}\setminus\{h_0\}$ is a partitioning of $\mathbb{D}^3$ with $\max\{ {\rm diam}(k) \,:\, k\in K_{n_j}\}<\frac1{2^j}$.
\end{itemize}
To prove the lemma we first show by induction that $\mathcal{A}(m)$ is true for all $m\ge 1$. 
In the course of this induction proof we construct sequences $(H_i)_{i\ge 1}$ and $(K_{n_j})_{j\ge 1}$ that satisfy (i-$m$) -- (x-$m$) for each $m\in \N$. This induction argument implies (i) -- (v). To gain (vi) -- (x) we have to show that our construction leads to $j_0(m)\nearrow \infty$ for $m\to\infty$.  

\medskip

For the induction start we prove $\mathcal{A}(1)$. 
Set $j_0(1)=0$. Thus $n_j=\infty$ for all $j\ge 1$ and $(K_{n_j})_{j=1}^0$ is the empty sequence.  
Set $H_1=\{(\mathbb{D}^3)^\circ,\mathbb{R}^3\setminus\mathbb{D}^3\}$. Then $\bigcup_{g\in \nQ_{1}'(\mathbf{n})} \partial g= \bigcup_{g\in \nQ_{1}'} \partial g=\partial T$ and  $\bigcup_{h\in H_{1}\setminus\{h_0\}} \partial h=\partial\mathbb{D}^3$. Since $\partial T$ is a $2$-sphere by \cite[Theorem~1.1]{TZ:19}, there exists a homeomorphism $F_1:\partial T \to \partial \mathbb{D}^3$. Thus $H_1$ satisfies (i-$1$), (ii-$1$) (which is empty for $m=1$), (iii-$1$), (iv-$1$) (note that $\nQ'_1(\mathbf{n})=\{T^\circ\}$; thus $T^\circ$ corresponds to $(\mathbb{D}^3)^\circ$), and (v-$1$) (whose first assertion is empty for $m=1$). Since $j_0(1)=0$, assertions (vi-$1$) -- (x-$1$) are empty. This concludes the induction start.

\medskip

To perform the induction step, let $m\ge 1$ and assume that $\mathcal{A}(m)$ is true. We have to distinguish two cases.

\medskip

\noindent {\it Case 1:} For $j_0=j_0(m)$ we have $m\ge n_{j_0}+2$ and there exists a homeomorphism $f_{j_0+1}:\R^3 \to \R^3$ such that each boundary point of each atom of $K_{n_{j_0}}$ is invariant under $f_{j_0+1}$ and $f_{j_0+1}\circ \cdots \circ f_1$ keeps $\R^3\setminus \mathbb{D}^3$ invariant and carries each other atom of $H_{m}$ into a set of diameter less than $\frac 1{2^{j_0+1}}$.

\medskip

\noindent {\it Case 2:} $m< n_{j_0(m)}+2$ or a homeomorphism as in Case~1 does not exist (this is the complement of Case~1). 

\medskip

If {\em Case 1} is in force then set $j_0(m+1)=j_0(m)+1$ and $n_{j_0(m+1)}=m+1$. This has no effect on the partitionings $\nQ_{1}'(\mathbf{n}),\ldots,\nQ_{m}'(\mathbf{n})$. By the definition of $(\nQ_{i}'(\mathbf{n}))_{i\ge 1}$ in \eqref{eq:nPn} we have 
\[
\nQ_{m}'(\mathbf{n})=\nQ_{n_{j_0(m+1)-1}}'(\mathbf{n})=\nQ_{n_{j_0(m+1)}}'(\mathbf{n})=\nQ_{m+1}'(\mathbf{n}).
\]
Thus, setting $H_{m+1}=H_{m}$ trivially yields (i-$(m+1)$) -- (v-$(m+1)$) from (i-$m$) -- (v-$m$).
Now let $f_{j_0(m)+1}=f_{j_0(m+1)}$ be the homeomorphism having the properties specified in Case~1 and set $K_{m+1}=\{f_{j_0(m+1)}\circ \cdots \circ f_2\circ f_1(h)\,:\, h\in H_{m+1}\}$. The condition for Case~1 (here we use that $H_{m+1}=H_m$) together with (x-$m$) implies that (x-$(m+1)$) is true. Because $f_{j_0(m+1)}\circ \cdots \circ f_2\circ f_1$ is a homeomorphism that keeps $\R^3\setminus\mathbb{D}^3$ invariant, (vi-$(m+1)$), (viii-$(m+1)$), and (ix-$(m+1)$) follow. Finally, (vii-$(m+1)$) is true because $f_{j_0(m+1)}$ leaves each boundary point of $K_{n_{j_0(m)}}$ invariant by (x-$(m+1)$). This finishes the induction step for Case~1.

\medskip

If {\em Case 2} holds, set $j_0(m+1)=j_0(m)$. Let $a\in  \nQ_{m}'(\mathbf{n})$ and let $\{g_{a,1},g_{a,2},\dots,g_{a,n(a)}\}=\{g \in \nQ'_{m+1}(\mathbf{n})\,:\, g \subseteq a\}$ be ordered in a way that they satisfy the conclusion of Proposition~\ref{prop:orderP2}. Let $h(a)\in H_{m}$ be the element corresponding to $a$ via (iv-$m$).
We want to apply Proposition~\ref{prop:bing3} with $S=\mathbb{S}^3$,  $S_2=\partial_{\mathbb{S}^3} a$, $C=\partial_{\mathbb{R}^3} h(a)$, $G=\nQ_{m+1}(\mathbf{n})$, and $F=F_{m}|_{\partial a}$. Therefore, we have to check the conditions of this proposition. By Lemma~\ref{lem:nBasic}~(ii), $\nQ_{m+1}(\mathbf{n})$ is a regular partitioning of $\mathbb{S}^3$ and Proposition~\ref{lem:topologyP} implies that $\nQ_{m+1}(\mathbf{n})$ satisfies conditions (1), (2), and (3) of Proposition~\ref{prop:bing3} (note that $m+1\ge 2$). By the order we chose for the elements $g_{a,1},\ldots, g_{a,n(a)}$ (using Proposition~\ref{prop:orderP2}), the set $\{g_{a,1},\dots,g_{a,n(a)}\}$ satisfies condition (4) of Proposition~\ref{prop:bing3} (observe that $a=\big(\overline{\bigcup_{\ell=1}^{n(a)} g_{a,\ell}}\big)^{\circ}$). Thus we can apply Proposition~\ref{prop:bing3}. This yields a partitioning $H_{m+1,a}=\{h_{a,0},h_{a,1},\dots,h_{a,n(a)}\}$ of $\R^3$, where $h_{a,0}$ is the exterior of $\partial h(a)$ and the boundaries of $h_{a,j}$ are tame $2$-spheres, and a homeomorphism 
 \begin{equation*}
 F_{m+1,a}: \partial_{\mathbb{S}^3} (g_{a,1}\cup \dots \cup g_{a,n(a)})\to \partial_{\mathbb{R}^3}(h_{a,1}\cup \dots\cup h_{a,n(a)})
 \end{equation*}
satisfying 
\begin{equation}\label{eq:Frest}
F_{m+1,a}|_{\partial a} =F_{m}|_{\partial a} 
\end{equation}
and 
\begin{equation}\label{eq:Fbound}
F_{m+1,a} (\partial {g_{a,j}})=\partial h_{a,j} \quad\hbox{for each } j\in \{1,2,\dots,n(a)\}.
\end{equation}

Set $H_{m+1}=\{h_0\}\cup\bigcup_{a\in  \nQ_{m}'(\mathbf{n})} H_{m+1,a}'$, where $H_{m+1,a}'=H_{m+1,a}\setminus\{h_{a,0}\}$. By construction, $H_{m+1}$ is a partitioning of $\R^3$ whose atoms have tame spherical  boundary, which is a refinement of $H_{m}$, and which contains the atom $h_0$. Thus, by the induction hypothesis $\mathcal{A}(m)$, $(H_i)_{i=1}^{m+1}$ satisfies (i-$(m+1)$), (ii-$(m+1)$), and (iii-$(m+1)$). Observe that
\begin{align*}
F_{m+1}:
 \bigcup_{g\in \nQ_{m+1}'(\mathbf{n})} \partial g &\to  \bigcup_{h\in H_{m+1}\setminus \{h_0\}} \partial h, \qquad
%\\
x 
%&
\mapsto F_{m+1,a}(x) \quad\hbox{for}\quad x\in \partial_{\mathbb{S}^3} (g_{a,1}\cup \dots \cup g_{a,n(a)})
\end{align*}
is a homeomorphism which is well-defined on the boundary of each atom of the partitioning $\nQ_{m+1}'(\mathbf{n})$ because the homeomorphisms $F_{m+1,a}$, $a\in\nQ'_m(\mathbf{n})$, agree on the intersections of their domains. Thus (iv-$(m+1)$) holds with the correspondence $g_{a,\ell}\leftrightarrow h_{a,\ell}$ ($a\in \nQ_m(\mathbf{n})$, $1\le\ell\le n(a)$; see in particular \eqref{eq:Fbound}).
To see that (v-$(m+1)$) is true, note that the restriction of $F_{m+1}$ to the domain of $F_{m}$ equals $F_{m}$ by \eqref{eq:Frest} and boundaries of corresponding atoms are mapped bijectively to each other by \eqref{eq:Fbound}. Thus (i-$(m+1)$) -- (v-$(m+1)$) hold also in Case~2.

In Case 2~we have $j_0(m+1)=j_0(m)$. Thus items (vi-$(m+1)$) -- (x-$(m+1)$) are the same as (vi-$m$) -- (x-$m$) and there is nothing to prove. Thus the induction step is finished also in Case~2. 
This completes the induction proof. 

\medskip 

This induction proof already implies assertions (i) -- (v) of the lemma. To get (vi) -- (x) it remains to show that our process defines an infinite sequence $(n_j)_{j\ge 1}$ of integers $n_j$, {i.e.}, that $j_0(m)\nearrow \infty$ for $m\to\infty$. The monotonicity of $j_0(m)$ is clear from the construction. Since $j_0(m+1)=j_0(m)+1$ whenever we are in Case~1, it remains to prove the following claim.

\medskip

\noindent{\it Claim:} Case 1 occurs for infinitely many $m$ in the above induction process.

\medskip

To prove this assume on the contrary that Case~1 occurs only finitely many times. Then either there is a largest $m$ that has $m=n_{j_0}$ for some $j_0\ge1$, or Case~1 never occurs; then we set $m=1$ and $j_0=0$. Let $g \in \nQ'_m(\mathbf{n})$ and let $h$ be the element of $H_m\setminus\{h_0\}$ corresponding to $g$. 
Let
\[
(K_i(h))_{i>m} = (\{f_{j_0}\circ \cdots\circ f_1(h') \;:\; h' \in  H_i \hbox{ with } h' \subseteq  h\}\cup\{\R^3\setminus\overline{f_{j_0}\circ \cdots\circ f_1(h)}\})_{i>m}. 
\]
By the definition of $(\nQ_i(\mathbf{n}))_{i\ge 1}$ and by (iv) we have that
\begin{equation}\label{eq:hpequ}
\begin{split}
(K_i(h))_{i>m} &\sim (\{h'\in  H_i \;:\; h' \subseteq  h\} \cup\{\R^3\setminus\overline{h}\} )_{i>m} 
\\&
\sim (\{g'\in   \nQ_i'(\mathbf{n}) \;:\; g' \subseteq  g\}\cup\{\R^3\setminus\overline{g}\})_{i>m} 
\\&
= ([g](\nQ'_{i-\level{g}})\cup\{\R^3\setminus\overline{g}\})_{i>m} 
\\&
\sim 
 \begin{cases}
 (\nQ_i)_{i\ge 1} & \hbox{if } \level{g}=m, \\
 (\nQ_i)_{i\ge 2} & \hbox{if } \level{g}=m-1,
 \end{cases} 
 \end{split}
 \end{equation}
 where the equivalences have the additional property that $\partial a_1 \cap \partial a_2 \simeq \partial a_1'\cap \partial a_2'$ if $a_\ell$ and $a_\ell'$ are corresponding elements ($1\le \ell \le 2$). Indeed, this homeomorpies hold by (v) and because $f_{j_0}\circ\cdots\circ f_{1}$ and $[g]$ are homeomorphisms from $\R^3$ to $\R^3$.
Note that $(K_i(h))_{i>m}$ satisfies the conditions of Proposition~\ref{prop:bing5} with $C=\partial f_{j_0}\circ \cdots\circ f_1(h)$:  Proposition~\ref{prop:bing5}~(1) holds by (i), Proposition~\ref{prop:bing5}~(2)\footnote{It is this condition (2) of Proposition~\ref{prop:bing5} that required us to work with $(\nQ_i'(\mathbf{n}))$ rather than $(\nQ_i')$. Indeed, the fact that we use $(\nQ_i'(\mathbf{n}))$ guarantees that \eqref{eq:hpequ} holds.}
 holds by \eqref{eq:hpequ} and Proposition~\ref{lem:topologyP}~(3) (for $i\ge 2$; for $\nQ_1'$ it is easy to see), Proposition~\ref{prop:bing5}~(3) is true by (ii), Proposition~\ref{prop:bing5}~(4) is obviously true, and Proposition~\ref{prop:bing5}~(5) holds by (v). Indeed, note that $(\nQ_i'(\mathbf{n}))$ is decreasing and $F_k$ preserves $F_{k+n}$ on the boundaries of the elements of $\nQ_k'(\mathbf{n})$ for each $n\in\N$. Applying Proposition~\ref{prop:bing5} to $(K_i(h))_{i>m}$ we see that there is an integer $m'(g) \ge m+2$ for which there is a homeomorphism $f_{g,j_0+1}:\R^3\to \R^3$ that leaves $\R^3\setminus f_{j_0}\circ \cdots\circ f_1(h)$ pointwise invariant and $\mathrm{diam}(f_{g,j_0+1}(k'))<\frac 1{2^{j_0+1}}$ holds for each $m''\ge m'(g)$ and each $k' \in K_{m''}(h) \setminus \{\R^3\setminus\overline{f_{j_0}\circ \cdots\circ f_1(h)}\}$. Doing this for each $g\in \nQ'_m(\mathbf{n})$ and choosing $m'=\max\{m'(g)\,:\, g \in\nQ'_m(\mathbf{n})\}+3$ we can define the homeomorphism $f_{j_0+1}:\R^3\to \R^3$ by
\[
f_{j_0+1}(x) = f_{g,j_0+1}(x) \quad \hbox{for $x \in f_{j_0}\circ \cdots\circ f_1(h)$, where $h\in H_m\setminus\{h_0\}$ corresponds to $g\in \nQ'_m(\mathbf{n})$} 
\]
(extending it continuously to $\mathbb{R}^3$ by the identity outside $\mathbb{D}^3$). By construction, each boundary point of each atom of $K_{n_{j_0}}$ is invariant under $f_{j_0+1}$ and $f_{j_0+1}\circ \cdots \circ f_1$ keeps $\R^3\setminus \mathbb{D}^3$ invariant and carries each other atom of $H_{m'}$ into a set of diameter less than $\frac 1{2^{j_0+1}}$. Because $m' \ge  m+3$ we are in Case~1 for $m'>m$, a contradiction to the maximality of $m$. This proves the claim and, hence, the lemma.
\end{proof}

We can now easily finish the proof of Theorem~\ref{thm:ball}.
By Lemma~\ref{lem:nBasic} and Lemma~\ref{lem:HKnew}~(x), there is a strictly increasing sequence $(n_j)$ of positive integers such that $(\nQ_{n_j}'(\mathbf{n}))_{j \geq 1}$ and $(K_{n_j}\setminus\{h_0\})_{j \geq 1}$  are decreasing sequences of partitionings of $T$ and $\mathbb{D}^3$, respectively. From Lemma~\ref{lem:HKnew}~(iv) and~(ix) we obtain that $(\nQ_{n_j}'(\mathbf{n}))_{j\ge 1}\sim(H_{n_j}\setminus\{h_0\})_{j\ge 1}\sim (K_{n_j}\setminus\{h_0\})_{j\ge 1}$. Thus Lemma \ref{lem:homeo}  (see also Remark~\ref{rem:simeq}) implies that $T$ is homeomorphic to~$\mathbb{D}^3$. This concludes the proof of Theorem~\ref{thm:ball}.

\subsection{Proof of Theorem~\ref{thm:CW}} \label{sec:proof2}
In view of Section~\ref{sec:normal} we may assume that $T$ is an $ABC$-tile with $14$ neighbors and that  $A=1$. As in the proof of Lemma~\ref{eq:UrO} we see that the truncated octahedron $O$ is a CW complex in the following natural sense. Let $O_{\boldsymbol{\alpha}}$ (${\boldsymbol{\alpha}}\subseteq \nS$) be as in \ref{eq:Oa}. For $i\in\{0,1,2,3\}$ the closed $i$-cells are given by the nonempty sets $O_{\boldsymbol{\alpha}}$ with $\boldsymbol{\alpha}\subseteq\nS$ and $\#\boldsymbol{\alpha} = 3-i$. Thus the $0$-skeleton $O^0$ is the set of vertices of $O$. Each closed $1$-cell $O_{\{\alpha_1,\alpha_2\}}$ is attached to the two closed $0$-cells $O_{\boldsymbol{\alpha}}$ satisfying $\boldsymbol{\alpha} \supset \{\alpha_1,\alpha_2\}$ and $\#\boldsymbol{\alpha} = 3$ (these $2$ closed $0$-cells form a $0$-sphere, {\it i.e.}, two points). This yields the $1$-skeleton $O^1$ ({\it i.e.}, the edges of $O$). To get the $2$-skeleton $O^2$ (whose support is $\partial O$) we attach each closed $2$-cell $O_{\alpha_1}$, $\alpha_1\in\nS$, to the $1$-sphere $\bigcup_{\alpha_2 \in \nS :\alpha_2\not=\alpha_1} O_{\{\alpha_1,\alpha_2\}}$. Finally, we attach the closed $3$-cell $O=O_\emptyset$ to the sphere $O^2$.

From Proposition~\ref{lem:TZ:19} and Theorem~\ref{thm:ball} we see that the set $T$ is a CW complex whose closed $i$-cells are given by the nonempty sets $\B_{\boldsymbol{\alpha}}$ with $\boldsymbol{\alpha}\subseteq\nS$ and $\#\boldsymbol{\alpha} = 3-i$ for $i\in\{0,1,2,3\}$ with analogous attaching rules as above. 

Thus, by Lemma~\ref{lem:OB} and Theorem~\ref{thm:ball}, $T$ has the CW complex structure indicated in the statement of Theorem~\ref{thm:CW}. This CW complex structure is isomorphic to the natural CW complex structure of $O$. 
The number of closed $i$-cells asserted in Theorem~\ref{thm:CW} can immediately be counted on $O$: A truncated octahedron has $14$~faces, $36$~edges, and $24$~vertices. 
This finishes the proof of Theorem~\ref{thm:CW}.

\bibliographystyle{siam}  
\bibliography{biblio}

\end{document}